\providecommand{\U}[1]{\protect\rule{.1in}{.1in}}
\newcounter{exer}
\newcounter{exera}
\theoremstyle{definition}
\newtheorem{theo}{Theorem}[section]
\newenvironment{theorem}[1][]
{\begin{theo}[#1]\begin{leftbar}}
{\end{leftbar}\end{theo}}
\newtheorem{lem}[theo]{Lemma}
\newtheorem{prop}[theo]{Proposition}
\newenvironment{proposition}[1][]
{\begin{prop}[#1]\begin{leftbar}}
{\end{leftbar}\end{prop}}
\newtheorem{defi}[theo]{Definition}
\newenvironment{definition}[1][]
{\begin{defi}[#1]\begin{leftbar}}
{\end{leftbar}\end{defi}}
\newtheorem{remk}[theo]{Remark}
\newenvironment{remark}[1][]
{\begin{remk}[#1]\begin{leftbar}}
{\end{leftbar}\end{remk}}
\newtheorem{coro}[theo]{Corollary}
\newenvironment{corollary}[1][]
{\begin{coro}[#1]\begin{leftbar}}
{\end{leftbar}\end{coro}}
\newtheorem{conv}[theo]{Convention}
\newenvironment{convention}[1][]
{\begin{conv}[#1]\begin{leftbar}}
{\end{leftbar}\end{conv}}
\newtheorem{quest}[theo]{Question}
\newenvironment{question}[1][]
{\begin{quest}[#1]\begin{leftbar}}
{\end{leftbar}\end{quest}}
\newtheorem{warn}[theo]{Warning}
\newtheorem{soln}{Solution}
\newtheorem{conj}[theo]{Conjecture}
\newtheorem{exam}[theo]{Example}
\newenvironment{example}[1][]
{\begin{exam}[#1]\begin{leftbar}}
{\end{leftbar}\end{exam}}
\newtheorem{exmp}[exer]{Exercise}
\newtheorem{exetwo}[exera]{Additional exercise}
\newenvironment{noncompile}{}{}
\newcommand{\kk}{\mathbf{k}}
\newcommand{\id}{\operatorname{id}}
\newcommand{\QSym}{\operatorname{QSym}_{\mathbf{k}}}
\newcommand{\undA}{\underline{A}}
\let\sumnonlimits\sum
\let\prodnonlimits\prod
\let\cupnonlimits\bigcup
\let\capnonlimits\bigcap
\renewcommand{\sum}{\sumnonlimits\limits}
\renewcommand{\prod}{\prodnonlimits\limits}
\renewcommand{\bigcup}{\cupnonlimits\limits}
\renewcommand{\bigcap}{\capnonlimits\limits}
\begin{document}

\title{The Bernstein homomorphism via Aguiar-Bergeron-Sottile universality}
\author{Darij Grinberg}
\date{version 2.1, {\today}}
\maketitle

\begin{abstract}
If $H$ is a commutative connected graded Hopf algebra over a commutative ring
$\mathbf{k}$, then a certain canonical $\mathbf{k}$-algebra homomorphism
$H\rightarrow H\otimes\operatorname*{QSym}\nolimits_{\mathbf{k}}$ is defined,
where $\operatorname*{QSym}\nolimits_{\mathbf{k}}$ denotes the Hopf algebra of
quasisymmetric functions. This homomorphism generalizes the \textquotedblleft
internal comultiplication\textquotedblright\ on $\operatorname*{QSym}%
\nolimits_{\mathbf{k}}$, and extends what Hazewinkel (in \S 18.24 of his
\textquotedblleft Witt vectors\textquotedblright) calls the Bernstein homomorphism.

We construct this homomorphism with the help of the universal property of
$\operatorname*{QSym}\nolimits_{\mathbf{k}}$ as a combinatorial Hopf algebra
(a well-known result by Aguiar, Bergeron and Sottile) and extension of scalars
(the commutativity of $H$ allows us to consider, for example, $H\otimes
\operatorname*{QSym}\nolimits_{\mathbf{k}}$ as an $H$-Hopf algebra, and this
change of viewpoint significantly extends the reach of the universal property).

\end{abstract}
\tableofcontents

\section*{***}

One of the most important aspects of $\operatorname*{QSym}$ (the Hopf algebra
of quasisymmetric functions) is a universal property discovered by Aguiar,
Bergeron and Sottile in 2003 \cite{ABS}; among other applications, it gives a
unifying framework for various quasisymmetric and symmetric functions
constructed from combinatorial objects (e.g., the chromatic symmetric function
of a graph).

On the other hand, let $\Lambda_{\mathbf{k}}$ be the Hopf algebra of symmetric
functions over a commutative ring $\mathbf{k}$. If $H$ is any commutative
cocommutative connected graded $\mathbf{k}$-Hopf algebra, then a certain
$\mathbf{k}$-algebra homomorphism $H\rightarrow H\otimes\Lambda_{\mathbf{k}}$
(not a Hopf algebra homomorphism!) was defined by Joseph N. Bernstein, and
used by Zelevinsky in \cite[\S 5.2]{Zelevi81} to classify PSH-algebras. In
\cite[\S 18.24]{HazeWitt}, Hazewinkel observed that this homomorphism
generalizes the second comultiplication of $\Lambda_{\mathbf{k}}$, and asked
for \textquotedblleft more study\textquotedblright\ and a better understanding
of this homomorphism.

In this note, I shall define an extended version of this homomorphism: a
$\mathbf{k}$-algebra homomorphism $H\rightarrow H\otimes\operatorname*{QSym}%
\nolimits_{\mathbf{k}}$ for any commutative (but not necessarily
cocommutative) connected graded $\mathbf{k}$-Hopf algebra $H$. This
homomorphism, which I will call the \textit{Bernstein homomorphism}, will
generalize the second comultiplication of $\operatorname*{QSym}%
\nolimits_{\mathbf{k}}$, or rather its variant with the two tensorands
flipped. When $H$ is cocommutative, this homomorphism has its image contained
in $H\otimes\Lambda_{\mathbf{k}}$ and thus becomes Bernstein's original homomorphism.

The Bernstein homomorphism $H\rightarrow H\otimes\operatorname*{QSym}%
\nolimits_{\mathbf{k}}$ is not fully new (although I have not seen it appear
explicitly in the literature). Its dual version is a coalgebra homomorphism
$H^{\prime}\otimes\operatorname*{NSym}\nolimits_{\mathbf{k}}\rightarrow
H^{\prime}$, where $H^{\prime}$ is a cocommutative connected graded Hopf
algebra; i.e., it is an action of $\operatorname*{NSym}\nolimits_{\mathbf{k}}$
on any such $H^{\prime}$. This action is implicit in the work of Patras and
Reutenauer on descent algebras, and a variant of it for Hopf monoids instead
of Hopf algebras appears in \cite[Propositions 84 and 88, and especially the
Remark after Proposition 88]{Aguiar13}. What I believe to be new in this note
is the way I will construct the Bernstein homomorphism: as a consequence of
the Aguiar-Bergeron-Sottile universal property of $\operatorname*{QSym}$, but
applied not to the $\mathbf{k}$-Hopf algebra $\operatorname*{QSym}%
\nolimits_{\mathbf{k}}$ but to the $H$-Hopf algebra $\operatorname*{QSym}%
\nolimits_{H}$. The commutativity of $H$ is being used here to deploy $H$ as
the base ring.

\subsection*{Acknowledgments}

Thanks to Marcelo Aguiar for enlightening discussions.

\section{Definitions and conventions}

For the rest of this note, we fix a commutative ring\footnote{The word
\textquotedblleft ring\textquotedblright\ always means \textquotedblleft
associative ring with $1$\textquotedblright\ in this note. Furthermore, a
$\mathbf{k}$-algebra (when $\mathbf{k}$ is a commutative ring) means a
$\mathbf{k}$-module $A$ equipped with a ring structure such that the
multiplication map $A\times A\rightarrow A$ is $\mathbf{k}$-bilinear.}
$\mathbf{k}$. All tensor signs ($\otimes$) without a subscript will mean
$\otimes_{\mathbf{k}}$. We shall use the notions of $\mathbf{k}$-algebras,
$\mathbf{k}$-coalgebras and $\mathbf{k}$-Hopf algebras as defined (e.g.) in
\cite[Chapter 1]{Reiner}. We shall also use the notions of graded $\mathbf{k}%
$-algebras, graded $\mathbf{k}$-coalgebras and graded $\mathbf{k}$-Hopf
algebras as defined in \cite[Chapter 1]{Reiner}; in particular, we shall not
use the topologists' sign conventions\footnote{Thus, the twist map $V\otimes
V\rightarrow V\otimes V$ for a graded $\mathbf{k}$-module $V$ sends $v\otimes
w\mapsto w\otimes v$, even if $v$ and $w$ are homogeneous of odd degree.}. The
comultiplication and the counit of a $\mathbf{k}$-coalgebra $C$ will be
denoted by $\Delta_{C}$ and $\varepsilon_{C}$, respectively; when the $C$ is
unambiguously clear from the context, we will omit it from the notation (so we
will just write $\Delta$ and $\varepsilon$).

If $V$ and $W$ are two $\mathbf{k}$-modules, then we let $\tau_{V,W}$ be the
$\mathbf{k}$-linear map $V\otimes W\rightarrow W\otimes V,\ v\otimes w\mapsto
w\otimes v$. This $\mathbf{k}$-linear map $\tau_{V,W}$ is called the
\textit{twist map}, and is a $\mathbf{k}$-module isomorphism.

The next two definitions are taken from \cite[\S 1.4]{Reiner}\footnote{The
objects we are defining are classical and standard; however, the notation we
are using for them is not. For example, what we call $\Delta^{\left(
k-1\right)  }$ in Definition \ref{def.iterated-comult} is denoted by
$\Delta_{k-1}$ in \cite{Sweedler-HA}, and is called $\Delta^{\left(  k\right)
}$ in \cite[\S 7.1]{Fresse-Op}.}:

\begin{definition}
\label{def.iterated-mult}Let $A$ be a $\mathbf{k}$-algebra. Let $m_{A}$ denote
the $\mathbf{k}$-linear map $A\otimes A\rightarrow A,\ a\otimes b\mapsto ab$.
Let $u_{A}$ denote the $\mathbf{k}$-linear map $\mathbf{k}\rightarrow
A,\ \lambda\mapsto\lambda\cdot1_{A}$. (The maps $m_{A}$ and $u_{A}$ are often
denoted by $m$ and $u$ when $A$ is unambiguously clear from the context.) For
any $k\in\mathbb{N}$, we define a $\mathbf{k}$-linear map $m^{\left(
k-1\right)  }:A^{\otimes k}\rightarrow A$ recursively as follows: We set
$m^{\left(  -1\right)  }=u_{A}$, $m^{\left(  0\right)  }=\operatorname*{id}%
\nolimits_{A}$ and%
\[
m^{\left(  k\right)  }=m\circ\left(  \operatorname*{id}\nolimits_{A}\otimes
m^{\left(  k-1\right)  }\right)  \ \ \ \ \ \ \ \ \ \ \text{for every }k\geq1.
\]
The maps $m^{\left(  k-1\right)  }:A^{\otimes k}\rightarrow A$ are called the
\textit{iterated multiplication maps} of $A$.

Notice that for every $k\in\mathbb{N}$, the map $m^{\left(
k-1\right)  }$ is the $\mathbf{k}$-linear map $A^{\otimes k}\rightarrow A$
which sends every $a_{1}\otimes a_{2}\otimes\cdots\otimes a_{k}\in A^{\otimes
k}$ to $a_{1}a_{2}\cdots a_{k}$.
\end{definition}

\begin{definition}
\label{def.iterated-comult}Let $C$ be a $\mathbf{k}$-coalgebra. For any
$k\in\mathbb{N}$, we define a $\mathbf{k}$-linear map $\Delta^{\left(
k-1\right)  }:C\rightarrow C^{\otimes k}$ recursively as follows: We set
$\Delta^{\left(  -1\right)  }=\varepsilon_{C}$, $\Delta^{\left(  0\right)
}=\operatorname*{id}\nolimits_{C}$ and%
\[
\Delta^{\left(  k\right)  }=\left(  \operatorname*{id}\nolimits_{C}%
\otimes\Delta^{\left(  k-1\right)  }\right)  \circ\Delta
\ \ \ \ \ \ \ \ \ \ \text{for every }k\geq1.
\]
The maps $\Delta^{\left(  k-1\right)  }:C\rightarrow C^{\otimes k}$ are called
the \textit{iterated comultiplication maps} of $C$.
\end{definition}

A \textit{composition} shall mean a finite sequence of positive integers. The
\textit{size} of a composition $\alpha=\left(  \alpha_{1},\alpha_{2}%
,\ldots,\alpha_{k}\right)  $ is defined to be the nonnegative integer
$\alpha_{1}+\alpha_{2}+\cdots+\alpha_{k}$, and is denoted by $\left\vert
\alpha\right\vert $. Let $\operatorname*{Comp}$ denote the set of all compositions.

Let $\mathbb{N}$ denote the set $\left\{  0,1,2,\ldots\right\}  $.

\begin{definition}
\label{def.pialpha}Let $H$ be a graded $\mathbf{k}$-module. For every
$n\in\mathbb{N}$, we let $\pi_{n}:H\rightarrow H$ be the canonical projection
of $H$ onto the $n$-th graded component $H_{n}$ of $H$. We shall always regard
$\pi_{n}$ as a map from $H$ to $H$, not as a map from $H$ to $H_{n}$, even
though its image is $H_{n}$.

For every composition $\alpha=\left(  a_{1},a_{2},\ldots,a_{k}\right)  $, we
let $\pi_{\alpha}:H^{\otimes k}\rightarrow H^{\otimes k}$ be the tensor
product $\pi_{a_{1}}\otimes\pi_{a_{2}}\otimes\cdots\otimes\pi_{a_{k}}$ of the
canonical projections $\pi_{a_{i}}:H\rightarrow H$. Thus, the image of
$\pi_{\alpha}$ can be identified with $H_{a_{1}}\otimes H_{a_{2}}\otimes
\cdots\otimes H_{a_{k}}$.
\end{definition}

Let $\operatorname*{QSym}\nolimits_{\mathbf{k}}$ denote the $\mathbf{k}$-Hopf
algebra of quasisymmetric functions defined over $\mathbf{k}$. (This is
defined and denoted by $\mathcal{Q}Sym$ in \cite[\S 3]{ABS}; it is also
defined and denoted by $\operatorname*{QSym}$ in \cite[Chapter 5]{Reiner}.) We
shall follow the notations and conventions of \cite[\S 5.1]{Reiner} as far as
$\operatorname*{QSym}\nolimits_{\mathbf{k}}$ is concerned; in particular, we
regard $\operatorname*{QSym}\nolimits_{\mathbf{k}}$ as a subring of the ring
$\mathbf{k}\left[  \left[  x_{1},x_{2},x_{3},\ldots\right]  \right]  $ of
formal power series in countably many indeterminates $x_{1},x_{2},x_{3}%
,\ldots$.

Let $\varepsilon_{P}$ denote the $\mathbf{k}$-linear map $\operatorname*{QSym}%
\nolimits_{\mathbf{k}}\rightarrow\mathbf{k}$ sending every $f\in
\operatorname*{QSym}\nolimits_{\mathbf{k}}$ to $f\left(  1,0,0,0,\ldots
\right)  \in\mathbf{k}$. (This map $\varepsilon_{P}$ is denoted by
$\zeta_{\mathcal{Q}}$ in \cite[\S 4]{ABS} and by $\zeta_{Q}$ in \cite[Example
7.1.2]{Reiner}.) Notice that $\varepsilon_{P}$ is a $\mathbf{k}$-algebra homomorphism.

\begin{definition}
\label{def.Malpha}For every composition $\alpha=\left(  \alpha_{1},\alpha
_{2},\ldots,\alpha_{\ell}\right)  $, we define a power series $M_{\alpha}%
\in\mathbf{k}\left[  \left[  x_{1},x_{2},x_{3},\ldots\right]  \right]  $ by%
\[
M_{\alpha}=\sum_{1\leq i_{1}<i_{2}<\cdots<i_{\ell}}x_{i_{1}}^{\alpha_{1}%
}x_{i_{2}}^{\alpha_{2}}\cdots x_{i_{\ell}}^{\alpha_{\ell}}%
\]
(where the sum is over all strictly increasing $\ell$-tuples $\left(
i_{1}<i_{2}<\cdots<i_{\ell}\right)  $ of positive integers). It is well-known
(and easy to check) that this $M_{\alpha}$ belongs to $\operatorname*{QSym}%
\nolimits_{\mathbf{k}}$. The power series $M_{\alpha}$ is called the
\textit{monomial quasisymmetric function} corresponding to $\alpha$. The
family $\left(  M_{\alpha}\right)  _{\alpha\in\operatorname*{Comp}}$ is a
basis of the $\mathbf{k}$-module $\operatorname*{QSym}\nolimits_{\mathbf{k}}$;
this is the so-called \textit{monomial basis} of $\operatorname*{QSym}%
\nolimits_{\mathbf{k}}$. (See \cite[\S 3]{ABS} and \cite[\S 5.1]{Reiner} for
more about this basis.)
\end{definition}

It is well-known that every $\left(  b_{1},b_{2},\ldots,b_{\ell}\right)
\in\operatorname*{Comp}$ satisfies%
\begin{equation}
\Delta\left(  M_{\left(  b_{1},b_{2},\ldots,b_{\ell}\right)  }\right)
=\sum_{i=0}^{\ell}M_{\left(  b_{1},b_{2},\ldots,b_{i}\right)  }\otimes
M_{\left(  b_{i+1},b_{i+2},\ldots,b_{\ell}\right)  } \label{eq.DeltaM}%
\end{equation}
and%
\[
\varepsilon\left(  M_{\left(  b_{1},b_{2},\ldots,b_{\ell}\right)  }\right)  =%
\begin{cases}
1, & \text{if }\ell=0;\\
0, & \text{if }\ell\neq0
\end{cases}
.
\]
These two equalities can be used as a definition of the $\mathbf{k}$-coalgebra
structure on $\operatorname*{QSym}\nolimits_{\mathbf{k}}$ (because $\left(
M_{\alpha}\right)  _{\alpha\in\operatorname*{Comp}}$ is a basis of the
$\mathbf{k}$-module $\operatorname*{QSym}\nolimits_{\mathbf{k}}$, and thus the
$\mathbf{k}$-linear maps $\Delta$ and $\varepsilon$ are uniquely determined by
their values on the $M_{\alpha}$).

\section{The Aguiar-Bergeron-Sottile theorem}

The cornerstone of the Aguiar-Bergeron-Sottile paper \cite{ABS} is the
following result:

\begin{theorem}
\label{thm.ABS.hopf}Let $\mathbf{k}$ be a commutative ring. Let $H$ be a
connected graded $\mathbf{k}$-Hopf algebra. Let $\zeta:H\rightarrow\mathbf{k}$
be a $\mathbf{k}$-algebra homomorphism.

\textbf{(a)} Then, there exists a unique graded $\mathbf{k}$-coalgebra
homomorphism $\Psi:H\rightarrow\operatorname*{QSym}\nolimits_{\mathbf{k}}$ for
which the diagram%
\[%
\xymatrix{
H \ar[rr]^-{\Psi} \ar[dr]_{\zeta} & & \QSym\ar[dl]^{\varepsilon_P} \\
& \kk&
}%
\]
is commutative.

\textbf{(b)} This unique $\mathbf{k}$-coalgebra homomorphism $\Psi
:H\rightarrow\operatorname*{QSym}\nolimits_{\mathbf{k}}$ is a $\mathbf{k}%
$-Hopf algebra homomorphism.

\textbf{(c)} For every composition $\alpha=\left(  a_{1},a_{2},\ldots
,a_{k}\right)  $, define a $\mathbf{k}$-linear map $\zeta_{\alpha
}:H\rightarrow\mathbf{k}$ as the composition%
\[%
\xymatrixcolsep{3pc}
\xymatrix{
H \ar[r]^-{\Delta^{(k-1)}} & H^{\otimes k} \ar[r]^-{\pi_\alpha} & H^{\otimes
k}
\ar[r]^-{\zeta^{\otimes k}} & \kk^{\otimes k} \ar[r]^{\cong} & \kk}%
.
\]
(Here, the map $\mathbf{k}^{\otimes k}\overset{\cong}{\longrightarrow
}\mathbf{k}$ is the canonical $\mathbf{k}$-algebra isomorphism from
$\mathbf{k}^{\otimes k}$ to $\mathbf{k}$. Recall also that $\Delta^{\left(
k-1\right)  }:H\rightarrow H^{\otimes k}$ is the \textquotedblleft iterated
comultiplication map\textquotedblright; see \cite[\S 1.4]{Reiner} for its
definition. The map $\pi_{\alpha}:H^{\otimes k}\rightarrow H^{\otimes k}$ is
the one defined in Definition \ref{def.pialpha}.)

Then, the unique $\mathbf{k}$-coalgebra homomorphism $\Psi$ of Theorem
\ref{thm.ABS.hopf} \textbf{(a)} is given by the formula
\[
\Psi\left(  h\right)  =\sum_{\substack{\alpha\in\operatorname*{Comp}%
;\\\left\vert \alpha\right\vert =n}}\zeta_{\alpha}\left(  h\right)  \cdot
M_{\alpha}\ \ \ \ \ \ \ \ \ \ \text{whenever }n\in\mathbb{N}\text{ and }h\in
H_{n}.
\]
(Recall that $H_{n}$ denotes the $n$-th graded component of $H$.)

\textbf{(d)} The unique $\mathbf{k}$-coalgebra homomorphism $\Psi$ of Theorem
\ref{thm.ABS.hopf} \textbf{(a)} is also given by%
\[
\Psi\left(  h\right)  =\sum_{\alpha\in\operatorname*{Comp}}\zeta_{\alpha
}\left(  h\right)  \cdot M_{\alpha}\ \ \ \ \ \ \ \ \ \ \text{for every }h\in
H
\]
(in particular, the sum on the right hand side of this equality has only
finitely many nonzero addends).

\textbf{(e)} Assume that the $\mathbf{k}$-coalgebra $H$ is cocommutative.
Then, the unique $\mathbf{k}$-coalgebra homomorphism $\Psi$ of Theorem
\ref{thm.ABS.hopf} \textbf{(a)} satisfies $\Psi\left(  H\right)
\subseteq\Lambda_{\mathbf{k}}$, where $\Lambda_{\mathbf{k}}$ is the
$\mathbf{k}$-algebra of symmetric functions over $\mathbf{k}$. (See
\cite[\S 2]{Reiner} for the definition of $\Lambda_{\mathbf{k}}$. We regard
$\Lambda_{\mathbf{k}}$ as a $\mathbf{k}$-subalgebra of $\operatorname*{QSym}%
\nolimits_{\mathbf{k}}$ in the usual way.)
\end{theorem}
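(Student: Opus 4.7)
The plan is to prove (a) and (c) in tandem (the uniqueness argument produces the formula), deduce (d) by a grading observation, obtain (b) by a slick application of (a) to $H\otimes H$, and handle (e) via cocommutativity. Beginning with uniqueness in (a) and part (c): let $\Psi$ be any graded $\kk$-coalgebra homomorphism with $\varepsilon_P\circ\Psi=\zeta$. For $h\in H_n$, gradedness forces $\Psi(h)\in\QSym_n$, so $\Psi(h)=\sum_{|\alpha|=n}c_\alpha(h)M_\alpha$. I first identify the coefficient functional $c_\alpha$ on $\QSym_n$ (for $\alpha=(\alpha_1,\ldots,\alpha_k)$) as the composition $\varepsilon_P^{\otimes k}\circ\pi_\alpha\circ\Delta^{(k-1)}$: iterating (\ref{eq.DeltaM}) writes $\Delta^{(k-1)}(M_\beta)$ as a sum over concatenation factorizations $\beta=\beta^{(1)}\cdots\beta^{(k)}$, and $\pi_\alpha$ followed by $\varepsilon_P^{\otimes k}$ forces each $\beta^{(i)}=(\alpha_i)$ (since $\varepsilon_P(M_\gamma)=0$ when $\gamma$ has $\geq 2$ parts, and $\alpha_i\geq 1$ rules out $\beta^{(i)}=()$), hence $\beta=\alpha$. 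Chasing $\Psi$ through this extraction then gives the formula: $\Delta^{(k-1)}\circ\Psi=\Psi^{\otimes k}\circ\Delta^{(k-1)}$ since $\Psi$ is a coalgebra homomorphism, $\pi_\alpha$ commutes with the graded map $\Psi^{\otimes k}$, and $\varepsilon_P\circ\Psi=\zeta$ turns $\varepsilon_P^{\otimes k}$ into $\zeta^{\otimes k}$. The net result is $c_\alpha(h)=\zeta_\alpha(h)$, which both pins $\Psi$ down uniquely and establishes (c).

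For existence in (a) and for part (d), define $\Psi$ by the formula of (c). Gradedness is immediate. The counit compatibility $\varepsilon_P\circ\Psi=\zeta$ is checked graded-piece-wise: on $H_0=\kk\cdot 1_H$ only $\alpha=()$ contributes and gives $\varepsilon_H|_{H_0}=\zeta|_{H_0}$ (by unitality of $\zeta$), while on $H_n$ for $n\geq 1$ only $\alpha=(n)$ contributes (since $\varepsilon_P(M_\gamma)=0$ when $\gamma$ has $\geq 2$ parts) and gives $\zeta\circ\pi_n=\zeta|_{H_n}$. For the coalgebra homomorphism property, expand both sides of $\Delta\circ\Psi=(\Psi\otimes\Psi)\circ\Delta$ in the $M_\beta\otimes M_\gamma$ basis: the left yields $\zeta_{\beta\cdot\gamma}(h)$ via (\ref{eq.DeltaM}) (with $\cdot$ denoting concatenation of compositions), and the right yields $\sum_{(h)}\zeta_\beta(h_{(1)})\zeta_\gamma(h_{(2)})$; these agree by the iterated coassociativity identity $\Delta^{(k+\ell-1)}=(\Delta^{(k-1)}\otimes\Delta^{(\ell-1)})\circ\Delta$. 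For (d), $\pi_\alpha\circ\Delta^{(k-1)}$ vanishes on $H_n$ whenever $n\neq|\alpha|$ (since $\Delta^{(k-1)}$ is graded), so $\zeta_\alpha|_{H_n}=0$ for $|\alpha|\neq n$, and the sum in (d) reduces to the finite sum of (c) on each graded piece.

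For part (b), the elegant step is to apply part (a) not to $H$ but to the connected graded $\kk$-Hopf algebra $H\otimes H$, with character $\zeta':=m_\kk\circ(\zeta\otimes\zeta):H\otimes H\to\kk$. Both $\Psi\circ m_H$ and $m_{\QSym}\circ(\Psi\otimes\Psi)$ are graded $\kk$-coalgebra homomorphisms $H\otimes H\to\QSym$ (multiplication in any bialgebra is a coalgebra homomorphism, and tensor products and compositions of coalgebra homomorphisms are again coalgebra homomorphisms), and both satisfy $\varepsilon_P\circ(-)=\zeta'$ (for $\Psi\circ m_H$ using multiplicativity of $\zeta$, and for $m_{\QSym}\circ(\Psi\otimes\Psi)$ using multiplicativity of $\varepsilon_P$). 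The uniqueness in (a), applied to $H\otimes H$ with character $\zeta'$, forces the two compositions to coincide, which is precisely the statement that $\Psi$ is a $\kk$-algebra homomorphism.

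For part (e), cocommutativity $\tau\circ\Delta=\Delta$ propagates through iteration to full $S_k$-invariance of $\Delta^{(k-1)}$ under permutation of tensor factors (via the rewriting $\Delta^{(k-1)}=(\id^{\otimes(i-1)}\otimes\Delta\otimes\id^{\otimes(k-i-1)})\circ\Delta^{(k-2)}$ and the fact that adjacent transpositions generate $S_k$). Consequently $\zeta_\alpha=\zeta_{\sigma(\alpha)}$ for every $\sigma\in S_k$, so $\zeta_\alpha$ depends only on the multiset of parts of $\alpha$, i.e., on the underlying partition $\widetilde{\alpha}$. Grouping the sum in (d) by $\widetilde{\alpha}$ and using $m_\lambda=\sum_{\alpha:\widetilde{\alpha}=\lambda}M_\alpha$ rewrites $\Psi(h)$ as a $\kk$-linear combination of monomial symmetric functions $m_\lambda$, which lies in $\Lambda_\kk$. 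The main obstacle I anticipate is keeping the index bookkeeping in the uniqueness/formula step under control; once (a) is in hand, the remaining parts follow formally, with the key trick being the use of $H\otimes H$ in (b) to sidestep any direct multiplicativity calculation.
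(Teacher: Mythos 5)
Your proof is correct, but it is considerably more self-contained than what the paper actually does: the paper outsources parts \textbf{(a)}, \textbf{(b)}, \textbf{(c)} to the literature (the proofs of \cite[Theorem 4.1]{ABS} and \cite[Theorem 7.3]{Reiner}), proves \textbf{(d)} from \textbf{(c)} in a footnote by exactly your grading observation, and proves \textbf{(e)} by a different mechanism. Your arguments for \textbf{(a)}, \textbf{(c)}, \textbf{(b)} reproduce the standard ABS proof (coefficient extraction via $\varepsilon_{P}^{\otimes k}\circ\pi_{\alpha}\circ\Delta^{\left(  k-1\right)  }$ for uniqueness and the explicit formula; the $H\otimes H$ trick with the character $m_{\mathbf{k}}\circ\left(  \zeta\otimes\zeta\right)  $ for multiplicativity), so there is no divergence there beyond your writing it out. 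The genuine difference is in \textbf{(e)}: the paper invokes the analogous universal property of $\Lambda_{\mathbf{k}}$ (\cite[Theorem 4.3]{ABS}) to produce $\Psi^{\prime}:H\rightarrow\Lambda_{\mathbf{k}}$ and then uses the uniqueness in \textbf{(a)} to identify $\Psi$ with $\iota\circ\Psi^{\prime}$, whereas you argue directly that cocommutativity makes $\Delta^{\left(  k-1\right)  }$ invariant under the $S_{k}$-action (adjacent transpositions suffice), hence $\zeta_{\alpha}$ depends only on the underlying partition, and the sum in \textbf{(d)} regroups into monomial symmetric functions $m_{\lambda}$. Your route is more elementary and avoids importing a second universal property; the paper's route is shorter given the citation and yields the stronger structural statement that $\Psi$ factors as a graded coalgebra homomorphism through $\Lambda_{\mathbf{k}}$. (The paper itself gestures at your symmetry argument as an alternative, in the parenthetical remark after the proof of Theorem \ref{thm.QSym.bernstein} \textbf{(d)}.)

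Three routine loose ends you should tie up for completeness: in the existence step, check the counit condition $\varepsilon_{\operatorname*{QSym}\nolimits_{\mathbf{k}}}\circ\Psi=\varepsilon_{H}$ (immediate since $\varepsilon_{\operatorname*{QSym}\nolimits_{\mathbf{k}}}\left(  M_{\alpha}\right)  =\delta_{\alpha,\left(  {}\right)  }$ and $\zeta_{\left(  {}\right)  }=\varepsilon_{H}$); in \textbf{(b)}, note $\Psi\left(  1_{H}\right)  =1$ (only $\alpha=\left(  {}\right)  $ contributes on $H_{0}$), so that multiplicativity upgrades to an algebra homomorphism; and since \textbf{(b)} asserts a Hopf algebra homomorphism, add the standard fact that a bialgebra morphism between Hopf algebras automatically commutes with the antipodes.
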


Parts \textbf{(a)}, \textbf{(b)} and \textbf{(c)} of Theorem
\ref{thm.ABS.hopf} are proven in \cite[proof of Theorem 4.1]{ABS} and
\cite[proof of Theorem 7.1.3]{Reiner} (although we are using different
notations here\footnote{The paper \cite{ABS} defines a \textit{combinatorial
coalgebra} to be a pair $\left(  H,\zeta\right)  $ consisting of a connected
graded $\mathbf{k}$-coalgebra $H$ (where \textquotedblleft
connected\textquotedblright\ means that $\varepsilon\mid_{H_{0}}%
:H_{0}\rightarrow\mathbf{k}$ is a $\mathbf{k}$-module isomorphism) and a
$\mathbf{k}$-linear map $\zeta:H\rightarrow\mathbf{k}$ satisfying $\zeta
\mid_{H_{0}}=\varepsilon\mid_{H_{0}}$. Furthermore, it defines a
\textit{morphism} from a combinatorial coalgebra $\left(  H^{\prime}%
,\zeta^{\prime}\right)  $ to a combinatorial coalgebra $\left(  H,\zeta
\right)  $ to be a homomorphism $\alpha:H^{\prime}\rightarrow H$ of graded
$\mathbf{k}$-coalgebras for which the diagram%
\[%
\xymatrix{
H^\prime\ar[rr]^-{\alpha} \ar[dr]_{\zeta^\prime} & & H\ar[dl]^{\zeta} \\
& \kk&
}%
\]
is commutative. Theorem \ref{thm.ABS.hopf} \textbf{(a)} translates into this
language as follows: There exists a unique morphism from the combinatorial
coalgebra $\left(  H,\zeta\right)  $ to the combinatorial coalgebra $\left(
\operatorname*{QSym}\nolimits_{\mathbf{k}},\varepsilon_{P}\right)  $. (Apart
from this, \cite{ABS} is also using the notations $\mathbbm{k}$, $\mathcal{H}%
$, $\mathcal{Q}Sym$ and $\zeta_{\mathcal{Q}}$ for what we call $\mathbf{k}$,
$H$, $\operatorname*{QSym}\nolimits_{\mathbf{k}}$ and $\varepsilon_{P}$.)},
and avoiding the standing assumptions of \cite{ABS} which needlessly require
$\mathbf{k}$ to be a field and $H$ to be of finite type). Theorem
\ref{thm.ABS.hopf} \textbf{(d)} easily follows from Theorem \ref{thm.ABS.hopf}
\textbf{(c)}\footnote{\textit{Proof.} Let $\Psi$ be the unique $\mathbf{k}%
$-coalgebra homomorphism $\Psi$ of Theorem \ref{thm.ABS.hopf} \textbf{(a)}. It
is easy to see that every $n\in\mathbb{N}$, every composition $\alpha$ with
$\left\vert \alpha\right\vert \neq n$ and every $h\in H_{n}$ satisfy
$\zeta_{\alpha}\left(  h\right)  =0$ (because $\pi_{\alpha}\left(
\Delta^{\left(  k-1\right)  }\left(  \underbrace{h}_{\in H_{n}}\right)
\right)  \in\pi_{\alpha}\left(  \Delta^{\left(  k-1\right)  }\left(
H_{n}\right)  \right)  =0$ (for reasons of gradedness)). Hence, for every
$n\in\mathbb{N}$ and every $h\in H_{n}$, we have%
\begin{align*}
\sum_{\alpha\in\operatorname*{Comp}}\zeta_{\alpha}\left(  h\right)  \cdot
M_{\alpha}  &  =\sum_{\substack{\alpha\in\operatorname*{Comp};\\\left\vert
\alpha\right\vert =n}}\zeta_{\alpha}\left(  h\right)  \cdot M_{\alpha}%
+\sum_{\substack{\alpha\in\operatorname*{Comp};\\\left\vert \alpha\right\vert
\neq n}}\underbrace{\zeta_{\alpha}\left(  h\right)  }_{=0}\cdot M_{\alpha}\\
&  =\sum_{\substack{\alpha\in\operatorname*{Comp};\\\left\vert \alpha
\right\vert =n}}\zeta_{\alpha}\left(  h\right)  \cdot M_{\alpha}=\Psi\left(
h\right)  \ \ \ \ \ \ \ \ \ \ \left(  \text{by Theorem \ref{thm.ABS.hopf}
\textbf{(c)}}\right)  .
\end{align*}
Both sides of this equality are $\mathbf{k}$-linear in $h$; thus, it also
holds for every $h\in H$ (even if $h$ is not homogeneous). This proves Theorem
\ref{thm.ABS.hopf} \textbf{(d)}.}. Theorem \ref{thm.ABS.hopf} \textbf{(e)}
appears in \cite[Remark 7.1.4]{Reiner} (and something very close is proven in
\cite[Theorem 4.3]{ABS}). For the sake of completeness, let me give some
details on the proof of Theorem \ref{thm.ABS.hopf} \textbf{(e)}:

\begin{proof}
[Proof of Theorem \ref{thm.ABS.hopf} \textbf{(e)}.]Let $\varepsilon
_{p}:\Lambda_{\mathbf{k}}\rightarrow\mathbf{k}$ be the restriction of the
$\mathbf{k}$-algebra homomorphism $\varepsilon_{P}:\operatorname*{QSym}%
\nolimits_{\mathbf{k}}\rightarrow\mathbf{k}$ to $\Lambda_{\mathbf{k}}$. From
\cite[Theorem 4.3]{ABS}, we know that there exists a unique graded
$\mathbf{k}$-coalgebra homomorphism $\Psi^{\prime}:H\rightarrow\Lambda
_{\mathbf{k}}$ for which the diagram%
\begin{equation}%
\xymatrix{
H \ar[rr]^-{\Psi^\prime} \ar[dr]_{\zeta} & & \Lambda_{\kk} \ar
[dl]^{\varepsilon_p} \\
& \kk&
}
\label{pf.thm.ABS.hopf.e.1}%
\end{equation}
is commutative. Consider this $\Psi^{\prime}$. Let $\iota:\Lambda_{\mathbf{k}%
}\rightarrow\operatorname*{QSym}\nolimits_{\mathbf{k}}$ be the canonical
inclusion map; this is a $\mathbf{k}$-Hopf algebra homomorphism. Also,
$\varepsilon_{p}=\varepsilon_{P}\circ\iota$ (by the definition of
$\varepsilon_{p}$). The commutative diagram (\ref{pf.thm.ABS.hopf.e.1}) yields
$\zeta=\underbrace{\varepsilon_{p}}_{=\varepsilon_{P}\circ\iota}\circ
\Psi^{\prime}=\varepsilon_{P}\circ\iota\circ\Psi^{\prime}$.

Now, consider the unique $\mathbf{k}$-coalgebra homomorphism $\Psi$ of Theorem
\ref{thm.ABS.hopf} \textbf{(a)}. Due to its uniqueness, it has the following
property: If $\widetilde{\Psi}$ is any $\mathbf{k}$-coalgebra homomorphism
$H\rightarrow\operatorname*{QSym}\nolimits_{\mathbf{k}}$ for which the diagram%
\begin{equation}%
\xymatrix{
H \ar[rr]^-{\widetilde{\Psi}} \ar[dr]_{\zeta} & & \QSym\ar[dl]^{\varepsilon_P}
\\
& \kk&
}
\label{pf.thm.ABS.hopf.e.2}%
\end{equation}
is commutative, then $\widetilde{\Psi}=\Psi$. Applying this to
$\widetilde{\Psi}=\iota\circ\Psi^{\prime}$, we obtain $\iota\circ\Psi^{\prime
}=\Psi$ (since the diagram (\ref{pf.thm.ABS.hopf.e.2}) is commutative for
$\widetilde{\Psi}=\iota\circ\Psi^{\prime}$ (because $\zeta=\varepsilon
_{P}\circ\iota\circ\Psi^{\prime}$)). Hence, $\underbrace{\Psi}_{=\iota
\circ\Psi^{\prime}}\left(  H\right)  =\left(  \iota\circ\Psi^{\prime}\right)
\left(  H\right)  =\iota\left(  \underbrace{\Psi^{\prime}\left(  H\right)
}_{\subseteq\Lambda_{\mathbf{k}}}\right)  \subseteq\iota\left(  \Lambda
_{\mathbf{k}}\right)  =\Lambda_{\mathbf{k}}$. This proves Theorem
\ref{thm.ABS.hopf} \textbf{(e)}.
\end{proof}

\begin{noncompile}
\begin{proof}
[Proof of Theorem \ref{thm.ABS.hopf} \textbf{(e)}.]A \textit{partition} means
an infinite sequence $\left(  \lambda_{1},\lambda_{2},\lambda_{3}%
,\ldots\right)  $ of nonnegative integers satisfying $\lambda_{1}\geq
\lambda_{2}\geq\lambda_{3}\geq\cdots$ and $\left(  \lambda_{i}=0\text{ for all
sufficiently high }i\right)  $. We identify any partition $\left(  \lambda
_{1},\lambda_{2},\lambda_{3},\ldots\right)  $ with the finite sequence
$\left(  \lambda_{1},\lambda_{2},\ldots,\lambda_{m}\right)  $ whenever $m$ is
chosen such that $\lambda_{m+1}=\lambda_{m+2}=\lambda_{m+3}=\cdots=0$. Thus,
in particular, any partition is identified with a unique composition. The
\textit{size} of a partition $\lambda$ is thus well-defined (because the size
of a composition is well-defined); explicitly, the size $\left\vert
\lambda\right\vert $ of a partition $\lambda=\left(  \lambda_{1},\lambda
_{2},\lambda_{3},\ldots\right)  $ is $\lambda_{1}+\lambda_{2}+\lambda
_{3}+\cdots$. Let $\operatorname*{Par}$ be the set of all partitions.

For every monomial $\mathfrak{m}$, there exists a unique partition
$\lambda=\left(  \lambda_{1},\lambda_{2},\lambda_{3},\ldots\right)  $ such
that the monomial $\mathfrak{m}$ can be obtained from $x_{1}^{\lambda_{1}%
}x_{2}^{\lambda_{2}}x_{3}^{\lambda_{3}}\cdots$ by interchanging variables.
(Namely, this partition $\lambda$ is obtained by arranging the exponents of
$\mathfrak{m}$ in weakly decreasing order.) This partition $\lambda$ will be
denoted by $\operatorname*{profile}\lambda$.

For every partition $\lambda$, define $m_{\lambda}$ to be the sum of all
monomials which are obtained from $x_{1}^{\lambda_{1}}x_{2}^{\lambda_{2}}%
x_{3}^{\lambda_{3}}\cdots$ by interchanging variables\footnote{Each such
monomial appears only once in the sum, even if there are several ways to
obtain it from $x_{1}^{\lambda_{1}}x_{2}^{\lambda_{2}}x_{3}^{\lambda_{3}%
}\cdots$ by interchanging variables}. This $m_{\lambda}$ is a power series in
$\mathbf{k}\left[  \left[  x_{1},x_{2},x_{3},\ldots\right]  \right]  $, and
belongs to $\Lambda$; it is called a \textit{monomial symmetric function}. (It
agrees with the power series $m_{\lambda}$ in \cite[(2.1.1)]{Reiner}.)

For every composition $\alpha$, let $\operatorname*{sort}\alpha$ be the
partition obtained by arranging the entries of $\alpha$ in weakly decreasing
order. Then, it is easy to see that%
\begin{equation}
m_{\lambda}=\sum_{\substack{\alpha\in\operatorname*{Comp}%
;\\\operatorname*{sort}\alpha=\lambda}}M_{\alpha}\ \ \ \ \ \ \ \ \ \ \text{for
every }\lambda\in\operatorname*{Par}. \label{pf.thm.ABS.hopf.1}%
\end{equation}

[etc.]
\end{proof}
\end{noncompile}

\begin{remark}
\label{rmk.zeta.convolution}Let $\mathbf{k}$, $H$ and $\zeta$ be as in Theorem
\ref{thm.ABS.hopf}. Then, the $\mathbf{k}$-module $\operatorname*{Hom}\left(
H,\mathbf{k}\right)  $ of all $\mathbf{k}$-linear maps from $H$ to
$\mathbf{k}$ has a canonical structure of a $\mathbf{k}$-algebra; its unity is
the map $\varepsilon\in\operatorname*{Hom}\left(  H,\mathbf{k}\right)  $, and
its multiplication is the binary operation $\star$ defined by%
\[
f\star g=m_{\mathbf{k}}\circ\left(  f\otimes g\right)  \circ\Delta
_{H}:H\rightarrow\mathbf{k}\ \ \ \ \ \ \ \ \ \ \text{for every }%
f,g\in\operatorname*{Hom}\left(  H,\mathbf{k}\right)
\]
(where $m_{\mathbf{k}}$ is the canonical isomorphism $\mathbf{k}%
\otimes\mathbf{k}\rightarrow\mathbf{k}$). This $\mathbf{k}$-algebra is called
the \textit{convolution algebra} of $H$ and $\mathbf{k}$; it is a particular
case of the construction in \cite[Definition 1.4.1]{Reiner}. Using this
convolution algebra, we can express the map $\zeta_{\alpha}$ in Theorem
\ref{thm.ABS.hopf} \textbf{(c)} as follows: For every composition
$\alpha=\left(  a_{1},a_{2},\ldots,a_{k}\right)  $, the map $\zeta_{\alpha
}:H\rightarrow\mathbf{k}$ is given by%
\[
\zeta_{\alpha}=\left(  \zeta\circ\pi_{a_{1}}\right)  \star\left(  \zeta
\circ\pi_{a_{2}}\right)  \star\cdots\star\left(  \zeta\circ\pi_{a_{k}}\right)
.
\]
(This follows from \cite[Exercise 1.4.23]{Reiner}.)
\end{remark}

\begin{noncompile}
[... add an alternative proof of Theorem \ref{thm.ABS.hopf} sans uniqueness.]
\end{noncompile}

\section{Extension of scalars and $\left(  \mathbf{k},\protect\underline{A}%
\right)  $-coalgebra homomorphisms}

Various applications of Theorem \ref{thm.ABS.hopf} can be found in \cite{ABS}
and \cite[Chapter 7]{Reiner}. We are going to present another application,
which we will obtain by \textquotedblleft leveraging\textquotedblright%
\ Theorem \ref{thm.ABS.hopf} through an extension-of-scalars
argument\footnote{I have learned this extension-of-scalars trick from
Petracci's \cite[proof of Lemma 2.1.1]{Petracci}; similar ideas appear in
various other algebraic arguments.}. Let us first introduce some more notations.

\begin{definition}
\label{def.forget}Let $H$ be a $\mathbf{k}$-algebra (possibly with additional
structure, such as a grading or a Hopf algebra structure). Then,
$\underline{H}$ will mean the $\mathbf{k}$-algebra $H$ without any additional
structure (for instance, the $\mathbf{k}$-coalgebra structure on $H$ is
forgotten if $H$ was a $\mathbf{k}$-bialgebra, and the grading is forgotten if
$H$ was graded). Sometimes we will use the notation $\underline{H}$ even when
$H$ has no additional structure beyond being a $\mathbf{k}$-algebra; in this case, it
means the same as $H$, just stressing the fact that it is a plain $\mathbf{k}%
$-algebra with nothing up its sleeves.

In other words, $\underline{H}$ will denote the image of $H$ under the
forgetful functor from whatever category $H$ belongs to to the category of
$\mathbf{k}$-algebras. We shall often use $\underline{H}$ and $H$
interchangeably, whenever $H$ is merely a $\mathbf{k}$-algebra or the other
structures on $H$ cannot cause confusion.
\end{definition}

\begin{definition}
\label{def.extend-scalars}Let $A$ be a commutative $\mathbf{k}$-algebra.

\textbf{(a)} If $H$ is a $\mathbf{k}$-module, then $\underline{A}\otimes H$
will be understood to mean the $A$-module $A\otimes H$, on which $A$ acts by
the rule
\[
a\left(  b\otimes h\right)  =ab\otimes h\ \ \ \ \ \ \ \ \ \ \text{for all
}a\in A\text{, }b\in A\text{ and }h\in H.
\]
This $A$-module $\underline{A}\otimes H$ is called the $\mathbf{k}%
$\textit{-module }$H$\textit{ with scalars extended to }$\underline{A}$.

We can define a functor $\operatorname*{Mod}\nolimits_{\mathbf{k}}%
\rightarrow\operatorname*{Mod}\nolimits_{A}$ (where $\operatorname*{Mod}%
\nolimits_{B}$ denotes the category of $B$-modules) which sends every object
$H\in\operatorname*{Mod}\nolimits_{\mathbf{k}}$ to $\underline{A}\otimes H$
and every morphism $f\in\operatorname*{Mod}\nolimits_{\mathbf{k}}\left(
H_{1},H_{2}\right)  $ to $\operatorname*{id}\otimes f\in\operatorname*{Mod}%
\nolimits_{A}\left(  \underline{A}\otimes H_{1},\underline{A}\otimes
H_{2}\right)  $; this functor is called \textit{extension of scalars} (from
$\mathbf{k}$ to $A$).

\textbf{(b)} If $H$ is a graded $\mathbf{k}$-module, then the $A$-module
$\underline{A}\otimes H$ canonically becomes a graded $\underline{A}$-module
(namely, its $n$-th graded component is $\underline{A}\otimes H_{n}$, where
$H_{n}$ is the $n$-th graded component of $H$). Notice that even if $A$ is
graded, we disregard its grading when defining the grading on $\underline{A}%
\otimes H$; this is why we are calling it $\underline{A}\otimes H$ and not
$A\otimes H$.

As before, we can define a functor from the category of graded $\mathbf{k}%
$-modules to the category of graded $A$-modules (which functor sends every
object $H$ to $\underline{A}\otimes H$), which is called \textit{extension of
scalars}.

\textbf{(c)} If $H$ is a $\mathbf{k}$-algebra, then the $A$-module
$\underline{A}\otimes H$ becomes an $A$-algebra according to the rule%
\[
\left(  a\otimes h\right)  \left(  b\otimes g\right)  =ab\otimes
hg\ \ \ \ \ \ \ \ \ \ \text{for all }a\in A,\ b\in A\text{, }h\in H\text{ and
}g\in H.
\]
(This is, of course, the same rule as used in the standard definition of the
tensor product $A\otimes H$; but notice that we are regarding $\underline{A}%
\otimes H$ as an $A$-algebra, not just as a $\mathbf{k}$-algebra.) This
$A$-algebra $\underline{A}\otimes H$ is called the $\mathbf{k}$%
\textit{-algebra }$H$\textit{ with scalars extended to }$\underline{A}$.

As before, we can define a functor from the category of $\mathbf{k}$-algebras
to the category of $A$-algebras (which functor sends every object $H$ to
$\underline{A}\otimes H$), which is called \textit{extension of scalars}.

\textbf{(d)} If $H$ is a $\mathbf{k}$-coalgebra, then the $A$-module
$\underline{A}\otimes H$ becomes an $A$-coalgebra. Namely, its
comultiplication is defined to be%
\[
\operatorname*{id}\nolimits_{A}\otimes\Delta_{H}:A\otimes H\rightarrow
A\otimes\left(  H\otimes H\right)  \cong\left(  A\otimes H\right)  \otimes
_{A}\left(  A\otimes H\right)  ,
\]
and its counit is defined to be%
\[
\operatorname*{id}\nolimits_{A}\otimes\varepsilon_{H}:A\otimes H\rightarrow
A\otimes\mathbf{k}\cong A
\]
(recalling that $\Delta_{H}$ and $\varepsilon_{H}$ are the comultiplication
and the counit of $H$, respectively). Note that both the comultiplication and
the counit of $\underline{A}\otimes H$ are $A$-linear, so that this
$A$-coalgebra $\underline{A}\otimes H$ is well-defined. This $A$-coalgebra
$\underline{A}\otimes H$ is called the $\mathbf{k}$\textit{-coalgebra }%
$H$\textit{ with scalars extended to }$\underline{A}$.

As before, we can define a functor from the category of $\mathbf{k}%
$-coalgebras to the category of $A$-coalgebras (which functor sends every
object $H$ to $\underline{A}\otimes H$), which is called \textit{extension of
scalars}.

Notice that $\underline{A}\otimes H$ is an $A$-coalgebra, not a $\mathbf{k}%
$-coalgebra. If $A$ has a pre-existing $\mathbf{k}$-coalgebra structure, then
the $A$-coalgebra structure on $\underline{A}\otimes H$ usually has nothing to
do with the $\mathbf{k}$-coalgebra structure on $A\otimes H$ obtained by
tensoring the $\mathbf{k}$-coalgebras $A$ and $H$.

\textbf{(e)} If $H$ is a $\mathbf{k}$-bialgebra, then the $A$-module
$\underline{A}\otimes H$ becomes an $A$-bialgebra. (Namely, the $A$-algebra
structure and the $A$-coalgebra structure previously defined on $\underline{A}%
\otimes H$, combined, form an $A$-bialgebra structure.) This $A$-bialgebra
$\underline{A}\otimes H$ is called the $\mathbf{k}$\textit{-bialgebra }%
$H$\textit{ with scalars extended to }$\underline{A}$.

As before, we can define a functor from the category of $\mathbf{k}%
$-bialgebras to the category of $A$-bialgebras (which functor sends every
object $H$ to $\underline{A}\otimes H$), which is called \textit{extension of
scalars}.

\textbf{(f)} Similarly, extension of scalars is defined for $\mathbf{k}$-Hopf
algebras, graded $\mathbf{k}$-bialgebras, etc.. Again, all structures on $A$
that go beyond the $\mathbf{k}$-algebra structure are irrelevant and can be forgotten.
\end{definition}

\begin{definition}
\label{def.kA-coalg}Let $A$ be a commutative $\mathbf{k}$-algebra.

\textbf{(a)} Let $H$ be a $\mathbf{k}$-module, and let $G$ be an $A$-module.
For any $\mathbf{k}$-linear map $f:H\rightarrow G$, we let $f^{\sharp}$ denote
the $A$-linear map%
\[
\underline{A}\otimes H\rightarrow G,\ \ \ \ \ \ \ \ \ \ a\otimes h\mapsto
af\left(  h\right)  .
\]
(It is easy to see that this latter map is indeed well-defined and
$A$-linear.) For any $A$-linear map $g:\underline{A}\otimes H\rightarrow G$,
we let $g^{\flat}$ denote the $\mathbf{k}$-linear map%
\[
H\rightarrow G,\ \ \ \ \ \ \ \ \ \ h\mapsto g\left(  1\otimes h\right)  .
\]

Sometimes we will use the notations $f^{\sharp\left(  A,\mathbf{k}\right)  }$
and $g^{\flat\left(  A,\mathbf{k}\right)  }$ instead of $f^{\sharp}$ and
$g^{\flat}$ when the $A$ and the $\mathbf{k}$ are not clear from the context.

It is easy to see that $\left(  f^{\sharp}\right)  ^{\flat}=f$ for any
$\mathbf{k}$-linear map $f:H\rightarrow G$, and that $\left(  g^{\flat
}\right)  ^{\sharp}=g$ for any $A$-linear map $g:\underline{A}\otimes
H\rightarrow G$. Thus, the maps%
\begin{align}
\left\{  \mathbf{k}\text{-linear maps }H\rightarrow G\right\}   &
\rightarrow\left\{  A\text{-linear maps }\underline{A}\otimes H\rightarrow
G\right\}  ,\nonumber\\
f  &  \mapsto f^{\sharp} \label{eq.def.kA-coalg.dies}%
\end{align}
and%
\begin{align}
\left\{  A\text{-linear maps }\underline{A}\otimes H\rightarrow G\right\}   &
\rightarrow\left\{  \mathbf{k}\text{-linear maps }H\rightarrow G\right\}
,\nonumber\\
g  &  \mapsto g^{\flat} \label{eq.def.kA-coalg.bemol}%
\end{align}
are mutually inverse.

This is a particular case of an adjunction between functors (namely, the
Hom-tensor adjunction, with a slight simplification, also known as the
induction-restriction adjunction); this is also the reason why we are using
the $\sharp$ and $\flat$ notations. The maps (\ref{eq.def.kA-coalg.dies}) and
(\ref{eq.def.kA-coalg.bemol}) are natural in $H$ and $G$.

\textbf{(b)} Let $H$ be a $\mathbf{k}$-coalgebra, and let $G$ be an
$A$-coalgebra. A $\mathbf{k}$-linear map $f:H\rightarrow G$ is said to be a
$\left(  \mathbf{k},\underline{A}\right)  $\textit{-coalgebra homomorphism} if
the $A$-linear map $f^{\sharp}:\underline{A}\otimes H\rightarrow G$ is an
$A$-coalgebra homomorphism.
\end{definition}

\begin{proposition}
\label{prop.relative.alg}Let $A$ be a commutative $\mathbf{k}$-algebra. Let
$H$ be a $\mathbf{k}$-algebra. Let $G$ be an $A$-algebra. Let $f:H\rightarrow
G$ be a $\mathbf{k}$-linear map. Then, $f$ is a $\mathbf{k}$-algebra
homomorphism if and only if $f^{\sharp}$ is an $A$-algebra homomorphism.
\end{proposition}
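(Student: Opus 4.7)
My plan is to prove both implications of the biconditional directly, exploiting the explicit formula $f^\sharp(a\otimes h) = af(h)$ from Definition \ref{def.kA-coalg} together with the observation that $(f^\sharp)^\flat = f$, i.e. $f(h) = f^\sharp(1\otimes h)$. In each direction I will separately verify preservation of the unit and of the multiplication; nothing more is needed.

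For the forward direction, assume $f$ is a $\mathbf{k}$-algebra homomorphism. The unit element of $\underline{A}\otimes H$ is $1_A\otimes 1_H$, and $f^\sharp(1_A\otimes 1_H) = 1_A\cdot f(1_H) = 1_A\cdot 1_G = 1_G$. For multiplicativity, since both sides of the equation to be checked are $A$-bilinear in their inputs (and every element of $\underline{A}\otimes H$ is a finite $A$-linear combination of pure tensors of the form $1_A\otimes h$), it suffices to check on pure tensors $a\otimes h$ and $b\otimes g$. I compute
\[
f^\sharp\bigl((a\otimes h)(b\otimes g)\bigr) = f^\sharp(ab\otimes hg) = ab\cdot f(hg) = ab\cdot f(h)f(g),
\]
whereas
\[
f^\sharp(a\otimes h)\cdot f^\sharp(b\otimes g) = \bigl(af(h)\bigr)\bigl(bf(g)\bigr) = ab\cdot f(h)f(g),
\]
where the last equality uses that the $A$-action on the $A$-algebra $G$ is central, i.e. commutes with multiplication in $G$ (this is the essential use of the $A$-algebra hypothesis on $G$). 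Hence $f^\sharp$ is an $A$-algebra homomorphism.

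For the reverse direction, assume $f^\sharp$ is an $A$-algebra homomorphism. Since $f = (f^\sharp)^\flat$, I may recover $f$ via $f(h) = f^\sharp(1_A\otimes h)$. Then $f(1_H) = f^\sharp(1_A\otimes 1_H) = 1_G$, and for $h,g\in H$,
\[
f(hg) = f^\sharp(1_A\otimes hg) = f^\sharp\bigl((1_A\otimes h)(1_A\otimes g)\bigr) = f^\sharp(1_A\otimes h)\cdot f^\sharp(1_A\otimes g) = f(h)f(g),
\]
so $f$ is a $\mathbf{k}$-algebra homomorphism.

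There is no serious obstacle here: the argument is a bookkeeping verification. The only subtlety worth flagging is in the forward direction, where reordering $af(h)\cdot b$ to $ab\cdot f(h)$ requires centrality of the $A$-action on $G$; this is built into the definition of an $A$-algebra and would fail if $G$ were merely a $\mathbf{k}$-algebra equipped with an $A$-module structure whose action did not commute with ring multiplication.
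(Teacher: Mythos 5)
Your proof is correct and is exactly the verification the paper leaves to the reader: it checks unit and multiplicativity on pure tensors in both directions, which is the "main step" the paper's proof sketch identifies. The appeal to $A$-bilinearity of the multiplication of $G$ to reorder $(af(h))(bf(g))$ as $ab\,f(h)f(g)$ is the right justification and is built into the paper's definition of an $A$-algebra.
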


\begin{proof}
[Proof of Proposition \ref{prop.relative.alg}.]Straightforward and left to the
reader. (The main step is to observe that $f^{\sharp}$ is an $A$-algebra
homomorphism if and only if it satisfies the following two conditions:

\begin{enumerate}
\item We have $f^{\sharp}\left(  1\otimes1\right)  =1$.

\item Every $a,b\in A$ and $h,g\in H$ satisfy $f^{\sharp}\left(  \left(
a\otimes h\right)  \left(  b\otimes g\right)  \right)  =f^{\sharp}\left(
a\otimes h\right)  f^{\sharp}\left(  b\otimes g\right)  $.
\end{enumerate}

\noindent This is because the tensor product $\underline{A}\otimes H$ is
spanned by pure tensors.)
\end{proof}

\begin{proposition}
\label{prop.relative.graded}Let $A$ be a commutative $\mathbf{k}$-algebra. Let
$H$ be a graded $\mathbf{k}$-module. Let $G$ be a graded $A$-module. Let
$f:H\rightarrow G$ be a $\mathbf{k}$-linear map. Then, the $\mathbf{k}$-linear
map $f$ is graded if and only if the $\mathbf{k}$-linear map $f^{\sharp}$ is graded.
\end{proposition}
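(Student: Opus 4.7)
The plan is to check both implications directly from the definitions, using the identities $f^{\sharp}(a \otimes h) = a f(h)$ and $f(h) = f^{\sharp}(1 \otimes h)$ (the latter being simply the relation $(f^{\sharp})^{\flat} = f$ from Definition \ref{def.kA-coalg}(a)). Throughout, the grading on $\underline{A} \otimes H$ is the one fixed by Definition \ref{def.extend-scalars}(b): its $n$-th graded component is $A \otimes H_n$, with $A$ treated as sitting in degree zero regardless of any grading it may carry. (The statement also tacitly assumes a compatible grading on $G$ as a graded $\underline{A}$-module, i.e., $A \cdot G_n \subseteq G_n$; otherwise the hypothesis ``$f^{\sharp}$ graded'' could not even be formulated.)

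For the ``if'' direction, assuming $f^{\sharp}$ is graded, I pick any $n \in \mathbb{N}$ and any $h \in H_n$, observe that $1 \otimes h$ lies in the $n$-th graded component $A \otimes H_n$ of $\underline{A} \otimes H$, and conclude that $f(h) = f^{\sharp}(1 \otimes h) \in G_n$. Hence $f$ sends $H_n$ into $G_n$ for every $n$, so $f$ is graded.

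For the ``only if'' direction, assuming $f$ is graded, I take an arbitrary element of the $n$-th graded component of $\underline{A} \otimes H$. It is a finite sum $\sum_i a_i \otimes h_i$ with $a_i \in A$ and $h_i \in H_n$, and its image under $f^{\sharp}$ is $\sum_i a_i f(h_i)$. By the hypothesis on $f$, each $f(h_i)$ lies in $G_n$, and since the $A$-action on $G$ preserves $G_n$, each summand $a_i f(h_i)$ lies in $G_n$ too; so does their sum. Thus $f^{\sharp}$ maps $(\underline{A} \otimes H)_n$ into $G_n$, as required.

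The only real obstacle is bookkeeping: one must keep in mind that $A$ sits in degree zero in the extended-scalars grading on $\underline{A} \otimes H$, which is precisely what makes the $A$-action on $G$ preserve each $G_n$ and thereby aligns the two gradedness conditions. Beyond this, the proposition is a one-line observation from the definitions—unsurprisingly, in the same spirit as the algebra analog Proposition \ref{prop.relative.alg}, which was also dispatched by reference to the reader.
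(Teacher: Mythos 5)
Your proof is correct; the paper omits this proof as ``straightforward,'' and your direct verification of both implications from the definitions is exactly the intended routine argument. Your observation that $G$ must tacitly carry a grading with $A\cdot G_{n}\subseteq G_{n}$ (so that $A$ sits in degree zero, matching the extension-of-scalars grading on $\underline{A}\otimes H$) is a correct and worthwhile clarification of the statement's hypotheses.
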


\begin{proof}
[Proof of Proposition \ref{prop.relative.graded}.]Again, straightforward and
therefore omitted.
\end{proof}

Let us first prove some easily-checked properties of $\left(  \mathbf{k}%
,\underline{A}\right)  $-coalgebra homomorphisms.

\begin{proposition}
\label{prop.relative.composition.a}Let $\mathbf{k}$ be a commutative ring. Let
$A$ be a commutative $\mathbf{k}$-algebra. Let $H$ be a $\mathbf{k}%
$-coalgebra. Let $G$ and $I$ be two $A$-coalgebras. Let $f:H\rightarrow G$ be
a $\left(  \mathbf{k},\underline{A}\right)  $-coalgebra homomorphism. Let
$g:G\rightarrow I$ be an $A$-coalgebra homomorphism. Then, $g\circ f$ is a
$\left(  \mathbf{k},\underline{A}\right)  $-coalgebra homomorphism.
\end{proposition}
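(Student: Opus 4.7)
The plan is to unwind the definition of $\left(\mathbf{k},\underline{A}\right)$-coalgebra homomorphism given in Definition \ref{def.kA-coalg} \textbf{(b)} and reduce the claim to the known fact that the composition of two $A$-coalgebra homomorphisms is an $A$-coalgebra homomorphism. Concretely, to show that $g\circ f$ is a $\left(\mathbf{k},\underline{A}\right)$-coalgebra homomorphism, I must show that $\left(g\circ f\right)^{\sharp}:\underline{A}\otimes H\rightarrow I$ is an $A$-coalgebra homomorphism.

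The central identity will be
\[
\left(g\circ f\right)^{\sharp}=g\circ f^{\sharp},
\]
which I would prove by checking it on pure tensors: for all $a\in A$ and $h\in H$, the left-hand side sends $a\otimes h$ to $a\left(g\circ f\right)\left(h\right)=a\cdot g\left(f\left(h\right)\right)$, while the right-hand side sends $a\otimes h$ to $g\left(f^{\sharp}\left(a\otimes h\right)\right)=g\left(af\left(h\right)\right)$. These agree because $g$ is $A$-linear, so $g\left(af\left(h\right)\right)=ag\left(f\left(h\right)\right)$. Since both sides are $A$-linear and agree on pure tensors, they are equal on all of $\underline{A}\otimes H$.

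Having established $\left(g\circ f\right)^{\sharp}=g\circ f^{\sharp}$, the conclusion is immediate: by hypothesis, $f^{\sharp}:\underline{A}\otimes H\rightarrow G$ is an $A$-coalgebra homomorphism (since $f$ is a $\left(\mathbf{k},\underline{A}\right)$-coalgebra homomorphism), and $g:G\rightarrow I$ is an $A$-coalgebra homomorphism by hypothesis; their composition $g\circ f^{\sharp}$ is therefore an $A$-coalgebra homomorphism, which by the identity above is precisely $\left(g\circ f\right)^{\sharp}$. This gives that $g\circ f$ is a $\left(\mathbf{k},\underline{A}\right)$-coalgebra homomorphism, completing the proof.

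There is no real obstacle here; the entire argument is bookkeeping about the $\sharp$-construction. The one point to be careful about is using the $A$-linearity of $g$ (not just its $\mathbf{k}$-linearity) to move the scalar $a$ past $g$ in the verification of the identity $\left(g\circ f\right)^{\sharp}=g\circ f^{\sharp}$; this is exactly where the assumption that $g$ is an honest $A$-coalgebra homomorphism (rather than merely a $\left(\mathbf{k},\underline{A}\right)$-coalgebra homomorphism) gets used.
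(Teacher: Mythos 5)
Your proof is correct and follows essentially the same route as the paper's: both reduce the claim to the identity $\left(g\circ f\right)^{\sharp}=g\circ f^{\sharp}$ (verified on pure tensors via the $A$-linearity of $g$) and then invoke closure of $A$-coalgebra homomorphisms under composition. The paper merely leaves the elementwise verification implicit, which you have written out.
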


\begin{proof}
[Proof of Proposition \ref{prop.relative.composition.a}.]Since $f$ is a
$\left(  \mathbf{k},\underline{A}\right)  $-coalgebra homomorphism, the map
$f^{\sharp}:\underline{A}\otimes H\rightarrow G$ is an $A$-coalgebra
homomorphism. Now, straightforward elementwise computation (using the fact
that the map $f$ is $\mathbf{k}$-linear, and the map $g$ is $A$-linear) shows
that
\begin{equation}
\left(  g\circ f\right)  ^{\sharp}=g\circ f^{\sharp}.
\label{pf.prop.relative.composition.a.1}%
\end{equation}
Thus, $\left(  g\circ f\right)  ^{\sharp}$ is an $A$-coalgebra homomorphism
(since $g$ and $f^{\sharp}$ are $A$-coalgebra homomorphisms). In other words,
$g\circ f$ is a $\left(  \mathbf{k},\underline{A}\right)  $-coalgebra
homomorphism. This proves Proposition \ref{prop.relative.composition.a}.
\end{proof}

\begin{proposition}
\label{prop.relative.composition.b}Let $\mathbf{k}$ be a commutative ring. Let
$A$ be a commutative $\mathbf{k}$-algebra. Let $F$ and $H$ be two $\mathbf{k}%
$-coalgebras. Let $G$ be an $A$-coalgebra. Let $f:H\rightarrow G$ be a
$\left(  \mathbf{k},\underline{A}\right)  $-coalgebra homomorphism. Let
$e:F\rightarrow H$ be a $\mathbf{k}$-coalgebra homomorphism. Then, $f\circ e$
is a $\left(  \mathbf{k},\underline{A}\right)  $-coalgebra homomorphism.
\end{proposition}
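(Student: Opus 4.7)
The plan is to reduce the claim to the functoriality of extension of scalars together with Proposition \ref{prop.relative.composition.a}. By definition, I need to show that $\left(f \circ e\right)^{\sharp} : \underline{A} \otimes F \to G$ is an $A$-coalgebra homomorphism. The key identity I would establish first is
\[
\left(f \circ e\right)^{\sharp} = f^{\sharp} \circ \left(\operatorname{id}\nolimits_A \otimes e\right),
\]
which is a one-line verification on pure tensors: both sides send $a \otimes x$ to $a f\left(e\left(x\right)\right)$, and pure tensors span $\underline{A} \otimes F$.

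Next I would invoke the fact that extension of scalars is a functor from $\mathbf{k}$-coalgebras to $A$-coalgebras (Definition \ref{def.extend-scalars} \textbf{(d)}); applying it to the $\mathbf{k}$-coalgebra homomorphism $e : F \to H$ yields an $A$-coalgebra homomorphism $\operatorname{id}\nolimits_A \otimes e : \underline{A} \otimes F \to \underline{A} \otimes H$. Since $f$ is a $\left(\mathbf{k}, \underline{A}\right)$-coalgebra homomorphism, $f^{\sharp} : \underline{A} \otimes H \to G$ is an $A$-coalgebra homomorphism by definition. The composition of two $A$-coalgebra homomorphisms is again an $A$-coalgebra homomorphism, so $\left(f \circ e\right)^{\sharp} = f^{\sharp} \circ \left(\operatorname{id}\nolimits_A \otimes e\right)$ is an $A$-coalgebra homomorphism. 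This is precisely the statement that $f \circ e$ is a $\left(\mathbf{k}, \underline{A}\right)$-coalgebra homomorphism.

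There is really no obstacle here; the only care needed is to write down the formula for $\left(f \circ e\right)^{\sharp}$ correctly (this is the analogue of the identity $\left(g \circ f\right)^{\sharp} = g \circ f^{\sharp}$ used in the proof of Proposition \ref{prop.relative.composition.a}, but now applied on the other side, so that it is the extension-of-scalars functor rather than post-composition that absorbs the extra map). I would present the proof in four short sentences: state the formula, note it is immediate on pure tensors, note that $\operatorname{id}\nolimits_A \otimes e$ is an $A$-coalgebra homomorphism by functoriality of extension of scalars, and conclude.
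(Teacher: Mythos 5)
Your proof is correct and follows exactly the same route as the paper's: both establish the identity $\left(f\circ e\right)^{\sharp}=f^{\sharp}\circ\left(\operatorname{id}\nolimits_{A}\otimes e\right)$ by checking on pure tensors, observe that $\operatorname{id}\nolimits_{A}\otimes e$ is an $A$-coalgebra homomorphism because extension of scalars is functorial, and conclude by composing $A$-coalgebra homomorphisms. No gaps.
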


\begin{proof}
[Proof of Proposition \ref{prop.relative.composition.b}.]Since $f$ is a
$\left(  \mathbf{k},\underline{A}\right)  $-coalgebra homomorphism, the map
$f^{\sharp}:\underline{A}\otimes H\rightarrow G$ is an $A$-coalgebra
homomorphism. The map $\operatorname*{id}\nolimits_{A}\otimes e:\underline{A}%
\otimes F\rightarrow\underline{A}\otimes H$ is an $A$-coalgebra homomorphism
(since $e:F\rightarrow H$ is a $\mathbf{k}$-coalgebra homomorphism). Now,
straightforward computation shows that $\left(  f\circ e\right)  ^{\sharp
}=f^{\sharp}\circ\left(  \operatorname*{id}\nolimits_{A}\otimes e\right)  $.
Hence, $\left(  f\circ e\right)  ^{\sharp}$ is an $A$-coalgebra homomorphism
(since $f^{\sharp}$ and $\operatorname*{id}\nolimits_{A}\otimes e$ are
$A$-coalgebra homomorphisms). In other words, $f\circ e$ is a $\left(
\mathbf{k},\underline{A}\right)  $-coalgebra homomorphism. This proves
Proposition \ref{prop.relative.composition.b}.
\end{proof}

\begin{proposition}
\label{prop.relative.composition.c1}Let $\mathbf{k}$ be a commutative ring.
Let $A$ be a commutative $\mathbf{k}$-algebra. Let $H$ be a $\mathbf{k}%
$-coalgebra. Let $G$ be an $A$-coalgebra. Let $B$ be a commutative
$A$-algebra. Let $p:A\rightarrow B$ be an $A$-algebra homomorphism. (Actually,
$p$ is uniquely determined by the $A$-algebra structure on $B$.) Let
$p_{G}:G\rightarrow B\otimes_{A}G$ be the canonical $A$-module homomorphism
defined as the composition%
\[
G\overset{\cong}{\longrightarrow}A\otimes_{A}G\overset{p\otimes_{A}%
\operatorname*{id}}{\longrightarrow}B\otimes_{A}G.
\]
Let $f:H\rightarrow G$ be a $\left(  \mathbf{k},\underline{A}\right)
$-coalgebra homomorphism. Then, $p_{G}\circ f:H\rightarrow\underline{B}%
\otimes_{A}G$ is a $\left(  \mathbf{k},\underline{B}\right)  $-coalgebra homomorphism.
\end{proposition}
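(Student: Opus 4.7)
The plan is to recognize $\left(p_G \circ f\right)^{\sharp\left(B,\mathbf{k}\right)}$ as the extension of scalars of $f^{\sharp\left(A,\mathbf{k}\right)}$ from $A$ to $B$, up to the canonical transitivity isomorphism $\underline{B} \otimes H \cong \underline{B} \otimes_A \left(\underline{A} \otimes H\right)$. Once this identification is made, the claim reduces to the functoriality of extension of scalars for coalgebras.

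First, since $f$ is a $\left(\mathbf{k}, \underline{A}\right)$-coalgebra homomorphism by hypothesis, the map $f^{\sharp\left(A,\mathbf{k}\right)} : \underline{A} \otimes H \to G$ is an $A$-coalgebra homomorphism. Applying the extension-of-scalars functor from $A$-coalgebras to $B$-coalgebras (the relative analogue of Definition \ref{def.extend-scalars}(d), with $p : A \to B$ playing the role previously played by the structure map $\mathbf{k} \to A$) yields that
$$\operatorname{id}_B \otimes_A f^{\sharp\left(A,\mathbf{k}\right)} : \underline{B} \otimes_A \left(\underline{A} \otimes H\right) \longrightarrow \underline{B} \otimes_A G$$
is a $B$-coalgebra homomorphism. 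Next, transitivity of extension of scalars furnishes a canonical $B$-coalgebra isomorphism $\varphi : \underline{B} \otimes H \to \underline{B} \otimes_A \left(\underline{A} \otimes H\right)$, determined on pure tensors by $b \otimes h \mapsto b \otimes_A \left(1 \otimes h\right)$. Its compatibility with comultiplication amounts to the routine check that, under $\varphi$, the comultiplication $\operatorname{id}_B \otimes \Delta_H$ on $\underline{B} \otimes H$ matches $\operatorname{id}_B \otimes_A \left(\operatorname{id}_A \otimes \Delta_H\right)$ on $\underline{B} \otimes_A \left(\underline{A} \otimes H\right)$.

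Finally, a direct calculation on pure tensors shows that
$$\left(p_G \circ f\right)^{\sharp\left(B,\mathbf{k}\right)} = \left(\operatorname{id}_B \otimes_A f^{\sharp\left(A,\mathbf{k}\right)}\right) \circ \varphi,$$
since both maps send $b \otimes h$ to $b \otimes_A f\left(h\right) \in \underline{B} \otimes_A G$. This exhibits $\left(p_G \circ f\right)^{\sharp\left(B,\mathbf{k}\right)}$ as a composite of two $B$-coalgebra homomorphisms, hence as a $B$-coalgebra homomorphism itself. By the definition of a $\left(\mathbf{k}, \underline{B}\right)$-coalgebra homomorphism, this is precisely the required conclusion.

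The only step with any substance is verifying that the transitivity map $\varphi$ respects comultiplication (and not merely the $B$-module structure); after that, everything reduces to bookkeeping with the $\sharp$-$\flat$ correspondence of Definition \ref{def.kA-coalg}(a). I expect this compatibility check to be the main, though still quite mild, obstacle, since the coalgebra structure on the iterated extension $\underline{B} \otimes_A \left(\underline{A} \otimes H\right)$ must be carefully unpacked from Definition \ref{def.extend-scalars}(d) and matched with the one on $\underline{B} \otimes H$.
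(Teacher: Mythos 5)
Your proposal is correct and follows essentially the same route as the paper's own proof: your $\varphi$ is the paper's $\kappa$, and both arguments rest on the identity $\left(p_G\circ f\right)^{\sharp\left(B,\mathbf{k}\right)}=\left(\operatorname{id}_B\otimes_A f^{\sharp}\right)\circ\kappa$, verified on pure tensors. Nothing essential differs.
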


\begin{proof}
[Proof of Proposition \ref{prop.relative.composition.c1}.]Since $f$ is a
$\left(  \mathbf{k},\underline{A}\right)  $-coalgebra homomorphism, the map
$f^{\sharp}=f^{\sharp\left(  A,\mathbf{k}\right)  }:\underline{A}\otimes
H\rightarrow G$ is an $A$-coalgebra homomorphism. Thus, the map
$\operatorname*{id}\nolimits_{B}\otimes_{A}f^{\sharp}:\underline{B}\otimes
_{A}\left(  \underline{A}\otimes H\right)  \rightarrow\underline{B}\otimes
_{A}G$ is a $B$-coalgebra homomorphism.

Let $\kappa:\underline{B}\otimes H\rightarrow\underline{B}\otimes_{A}\left(
\underline{A}\otimes H\right)  $ be the canonical $B$-module isomorphism
(sending each $b\otimes h\in\underline{B}\otimes H$ to $b\otimes_{A}\left(
1\otimes h\right)  $). It is well-known that $\kappa$ is a $B$-coalgebra
isomorphism\footnote{In fact, it is part of the natural isomorphism
$\operatorname*{Ind}\nolimits_{A}^{B}\circ\operatorname*{Ind}%
\nolimits_{\mathbf{k}}^{A}\cong\operatorname*{Ind}\nolimits_{\mathbf{k}}^{B}$,
where $\operatorname*{Ind}\nolimits_{P}^{Q}$ means extension of scalars from
$P$ to $Q$ (as a functor from the category of $P$-coalgebras to the category
of $Q$-coalgebras).}. Thus, $\left(  \operatorname*{id}\nolimits_{B}%
\otimes_{A}f^{\sharp}\right)  \circ\kappa$ is a $B$-coalgebra homomorphism
(since both $\operatorname*{id}\nolimits_{B}\otimes_{A}f^{\sharp}$ and
$\kappa$ are $B$-coalgebra homomorphisms).

The definition of $p_{G}$ yields that
\begin{equation}
p_{G}\left(  u\right)  =1\otimes_{A}u
\label{pf.prop.relative.composition.c1.1}%
\end{equation}
for every $u\in G$.

The map $p_{G}\circ f:H\rightarrow\underline{B}\otimes_{A}G$ gives rise to a
map $\left(  p_{G}\circ f\right)  ^{\sharp\left(  B,\mathbf{k}\right)
}:\underline{B}\otimes H\rightarrow\underline{B}\otimes_{A}G$. But easy
computations show that $\left(  p_{G}\circ f\right)  ^{\sharp\left(
B,\mathbf{k}\right)  }=\left(  \operatorname*{id}\nolimits_{B}\otimes
_{A}f^{\sharp}\right)  \circ\kappa$ (because the map $\left(  p_{G}\circ
f\right)  ^{\sharp\left(  B,\mathbf{k}\right)  }$ sends a pure tensor
$b\otimes h\in\underline{B}\otimes H$ to $b\underbrace{\left(  p_{G}\circ
f\right)  \left(  h\right)  }_{\substack{=p_{G}\left(  f\left(  h\right)
\right)  \\=1\otimes_{A}f\left(  h\right)  \\\text{(by
(\ref{pf.prop.relative.composition.c1.1}))}}}=b\left(  1\otimes_{A}f\left(
h\right)  \right)  =b\otimes_{A}f\left(  h\right)  $, whereas the map $\left(
\operatorname*{id}\nolimits_{B}\otimes_{A}f^{\sharp}\right)  \circ\kappa$
sends a pure tensor $b\otimes h\in\underline{B}\otimes H$ to
\begin{align*}
\left(  \left(  \operatorname*{id}\nolimits_{B}\otimes_{A}f^{\sharp}\right)
\circ\kappa\right)  \left(  b\otimes h\right)   &  =\left(  \operatorname*{id}%
\nolimits_{B}\otimes_{A}f^{\sharp}\right)  \left(  \underbrace{\kappa\left(
b\otimes h\right)  }_{=b\otimes_{A}\left(  1\otimes h\right)  }\right)
=\left(  \operatorname*{id}\nolimits_{B}\otimes_{A}f^{\sharp}\right)  \left(
b\otimes_{A}\left(  1\otimes h\right)  \right) \\
&  =b\otimes_{A}\underbrace{f^{\sharp}\left(  1\otimes h\right)  }_{=1f\left(
h\right)  =f\left(  h\right)  }=b\otimes_{A}f\left(  h\right)
\end{align*}
as well). Thus, $\left(  p_{G}\circ f\right)  ^{\sharp\left(  B,\mathbf{k}%
\right)  }$ is a $B$-coalgebra homomorphism (since $\left(  \operatorname*{id}%
\nolimits_{B}\otimes_{A}f^{\sharp}\right)  \circ\kappa$ is a $B$-coalgebra
homomorphism). In other words, $p_{G}\circ f$ is a $\left(  \mathbf{k}%
,\underline{B}\right)  $-coalgebra homomorphism. This proves Proposition
\ref{prop.relative.composition.c1}.
\end{proof}

\begin{proposition}
\label{prop.relative.composition.c2}Let $\mathbf{k}$ be a commutative ring.
Let $A$ and $B$ be two commutative $\mathbf{k}$-algebras. Let $H$ and $G$ be
two $\mathbf{k}$-coalgebras. Let $f:H\rightarrow\underline{A}\otimes G$ be a
$\left(  \mathbf{k},\underline{A}\right)  $-coalgebra homomorphism. Let
$p:A\rightarrow B$ be a $\mathbf{k}$-algebra homomorphism. Then, $\left(
p\otimes\operatorname*{id}\right)  \circ f:H\rightarrow\underline{B}\otimes G$
is a $\left(  \mathbf{k},\underline{B}\right)  $-coalgebra homomorphism.
\end{proposition}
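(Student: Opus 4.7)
The plan is to mimic the strategy of Proposition \ref{prop.relative.composition.c1}: pass to the $\sharp$-picture, invoke extension of scalars from $A$ to $B$ on the $A$-coalgebra side, then transport the result back through a canonical $B$-coalgebra isomorphism. By the definition of a $(\mathbf{k},\underline{A})$-coalgebra homomorphism, the $A$-linear map $f^{\sharp(A,\mathbf{k})}:\underline{A}\otimes H\rightarrow\underline{A}\otimes G$ is an $A$-coalgebra homomorphism, where on the right we use the standard $A$-coalgebra structure on $\underline{A}\otimes G$ coming from Definition \ref{def.extend-scalars} \textbf{(d)}.

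Next, view $B$ as a commutative $A$-algebra via $p$. Extension of scalars from $A$ to $B$ (applied to $f^{\sharp(A,\mathbf{k})}$) produces a $B$-coalgebra homomorphism
\[
\operatorname{id}_{B}\otimes_{A}f^{\sharp(A,\mathbf{k})}:\underline{B}\otimes_{A}\left(\underline{A}\otimes H\right)\rightarrow\underline{B}\otimes_{A}\left(\underline{A}\otimes G\right).
\]
For every $\mathbf{k}$-coalgebra $X$, the transitivity of extension of scalars gives a canonical $B$-coalgebra isomorphism $\kappa_{X}:\underline{B}\otimes X\rightarrow\underline{B}\otimes_{A}\left(\underline{A}\otimes X\right)$, sending $b\otimes x\mapsto b\otimes_{A}\left(1\otimes x\right)$; this is the same sort of isomorphism used in the proof of Proposition \ref{prop.relative.composition.c1}. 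Conjugating by the $\kappa$'s therefore yields a $B$-coalgebra homomorphism
\[
\Phi:=\kappa_{G}^{-1}\circ\left(\operatorname{id}_{B}\otimes_{A}f^{\sharp(A,\mathbf{k})}\right)\circ\kappa_{H}:\underline{B}\otimes H\rightarrow\underline{B}\otimes G.
\]

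It then remains to identify $\Phi$ with $\left(\left(p\otimes\operatorname{id}\right)\circ f\right)^{\sharp(B,\mathbf{k})}$. This is a straightforward pure-tensor check: writing $f(h)=\sum_{i}a_{i}\otimes g_{i}$, both sides send $b\otimes h\in\underline{B}\otimes H$ to $\sum_{i}bp(a_{i})\otimes g_{i}\in\underline{B}\otimes G$. Since $\Phi$ is a $B$-coalgebra homomorphism, so is $\left(\left(p\otimes\operatorname{id}\right)\circ f\right)^{\sharp(B,\mathbf{k})}$, which by Definition \ref{def.kA-coalg} \textbf{(b)} is exactly what it means for $\left(p\otimes\operatorname{id}\right)\circ f$ to be a $\left(\mathbf{k},\underline{B}\right)$-coalgebra homomorphism.

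The only work of any substance is the pure-tensor verification that $\Phi=\left(\left(p\otimes\operatorname{id}\right)\circ f\right)^{\sharp(B,\mathbf{k})}$, and even this is completely mechanical once one unwinds the definitions. The most plausible source of confusion (the ``main obstacle,'' such as it is) is keeping straight the two $B$-module structures on $\underline{B}\otimes G$: the one defined directly via Definition \ref{def.extend-scalars} \textbf{(a)}, and the one transported through $\kappa_{G}$ from $\underline{B}\otimes_{A}\left(\underline{A}\otimes G\right)$; the isomorphism $\kappa_{G}$ is precisely what reconciles them.
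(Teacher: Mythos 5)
Your proof is correct and follows essentially the same route as the paper's: the paper first applies Proposition \ref{prop.relative.composition.c1} to the $A$-coalgebra $\underline{A}\otimes G$ and then composes with the canonical $B$-coalgebra isomorphism $\underline{B}\otimes_{A}\left(\underline{A}\otimes G\right)\rightarrow\underline{B}\otimes G$ via Proposition \ref{prop.relative.composition.a}, whereas you inline that same extension-of-scalars argument (functoriality of $\operatorname{Ind}_{A}^{B}$ applied to $f^{\sharp}$, conjugation by the transitivity isomorphisms $\kappa$, and a pure-tensor identification). The two arguments rest on identical ingredients, and your pure-tensor check of $\Phi=\left(\left(p\otimes\operatorname{id}\right)\circ f\right)^{\sharp\left(B,\mathbf{k}\right)}$ is accurate.
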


\begin{proof}
[Proof of Proposition \ref{prop.relative.composition.c2}.]Consider $B$ as an
$A$-algebra by means of the $\mathbf{k}$-algebra homomorphism $p:A\rightarrow
B$. Thus, $p$ becomes an $A$-algebra homomorphism $A\rightarrow B$. Now,
$\underline{A}\otimes G$ is an $A$-coalgebra. Let $p_{\underline{A}\otimes
G}:\underline{A}\otimes G\rightarrow B\otimes_{A}\left(  \underline{A}\otimes
G\right)  $ be the canonical $A$-module homomorphism defined as the
composition%
\[
\underline{A}\otimes G\overset{\cong}{\longrightarrow}A\otimes_{A}\left(
\underline{A}\otimes G\right)  \overset{p\otimes_{A}\operatorname*{id}%
}{\longrightarrow}B\otimes_{A}\left(  \underline{A}\otimes G\right)  .
\]
Proposition \ref{prop.relative.composition.c1} (applied to $\underline{A}%
\otimes G$ and $p_{\underline{A}\otimes G}$ instead of $G$ and $p_{G}$) shows
that $p_{\underline{A}\otimes G}\circ f:H\rightarrow\underline{B}\otimes
_{A}\left(  \underline{A}\otimes G\right)  $ is a $\left(  \mathbf{k}%
,\underline{B}\right)  $-coalgebra homomorphism.

Now, let $\phi$ be the canonical $B$-module isomorphism
\[
\underline{B}\otimes_{A}\left(  \underline{A}\otimes G\right)  \rightarrow
\underbrace{\left(  \underline{B}\otimes_{A}\underline{A}\right)  }%
_{\cong\underline{B}}\otimes G\rightarrow\underline{B}\otimes G.
\]
Then, $\phi$ is a $B$-coalgebra homomorphism, and has the property that
$p\otimes\operatorname*{id}=\phi\circ p_{\underline{A}\otimes G}$ as maps
$\underline{A}\otimes G\rightarrow\underline{B}\otimes G$ (this can be checked
by direct computation). Now,%
\[
\underbrace{\left(  p\otimes\operatorname*{id}\right)  }_{=\phi\circ
p_{\underline{A}\otimes G}}\circ f=\phi\circ p_{\underline{A}\otimes G}\circ
f=\phi\circ\left(  p_{\underline{A}\otimes G}\circ f\right)
\]
must be a $\left(  \mathbf{k},\underline{B}\right)  $-coalgebra homomorphism
(by Proposition \ref{prop.relative.composition.a}, since $p_{\underline{A}%
\otimes G}\circ f$ is a $\left(  \mathbf{k},\underline{B}\right)  $-coalgebra
homomorphism and since $\phi$ is a $B$-coalgebra homomorphism). This proves
Proposition \ref{prop.relative.composition.c2}.
\end{proof}

\begin{proposition}
\label{prop.relative.composition.d}Let $\mathbf{k}$ be a commutative ring. Let
$A$ and $B$ be two commutative $\mathbf{k}$-algebras. Let $H$ be a
$\mathbf{k}$-coalgebra. Let $G$ be an $A$-coalgebra. Let $f:H\rightarrow G$ be
a $\left(  \mathbf{k},\underline{A}\right)  $-coalgebra homomorphism. Then,
$\operatorname*{id}\otimes f:\underline{B}\otimes H\rightarrow\underline{B}%
\otimes G$ is a $\left(  \underline{B},\underline{B}\otimes\underline{A}%
\right)  $-coalgebra homomorphism.
\end{proposition}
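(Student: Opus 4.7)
Set $R := B \otimes A$, a commutative $\mathbf{k}$-algebra which is simultaneously a $B$-algebra (via $b \mapsto b \otimes 1$) and an $A$-algebra (via $a \mapsto 1 \otimes a$). By Definition~\ref{def.kA-coalg}\textbf{(b)}, what I need to prove is that the $R$-linear map $(\operatorname{id} \otimes f)^{\sharp(R, B)} : \underline{R} \otimes_B (\underline{B} \otimes H) \to \underline{B} \otimes G$ is an $R$-coalgebra homomorphism, where the target $\underline{B} \otimes G$ carries the $R$-coalgebra structure obtained by extending the $A$-coalgebra $G$ from $A$ to $R = B \otimes A$ (and identifying $\underline{R} \otimes_A G$ with $\underline{B} \otimes G$ in the usual way).

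My plan is to realize $(\operatorname{id} \otimes f)^{\sharp(R, B)}$ as an extension of scalars of the map $f^{\sharp(A, \mathbf{k})}: \underline{A} \otimes H \to G$, which is an $A$-coalgebra homomorphism by the hypothesis that $f$ is a $(\mathbf{k}, \underline{A})$-coalgebra homomorphism. To do so, I will use the transitivity of extension of scalars for coalgebras: for any $\mathbf{k}$-coalgebra $M$, there are canonical $R$-coalgebra isomorphisms
\[
\underline{R} \otimes_B (\underline{B} \otimes M) \;\cong\; \underline{R} \otimes M \;\cong\; \underline{R} \otimes_A (\underline{A} \otimes M),
\]
analogous to the isomorphism $\kappa$ used in the proof of Proposition~\ref{prop.relative.composition.c1}. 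Specializing to $M = H$, the source of $(\operatorname{id} \otimes f)^{\sharp(R, B)}$ gets identified as an $R$-coalgebra with $\underline{R} \otimes_A (\underline{A} \otimes H)$, while its target $\underline{B} \otimes G$ is by construction the $R$-coalgebra $\underline{R} \otimes_A G$.

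Under these identifications, a direct pure-tensor check (both maps send $(b \otimes a) \otimes h$ to $b \otimes a f(h)$) shows that $(\operatorname{id} \otimes f)^{\sharp(R, B)}$ coincides with the map $\operatorname{id}_R \otimes_A f^{\sharp(A, \mathbf{k})} : \underline{R} \otimes_A (\underline{A} \otimes H) \to \underline{R} \otimes_A G$ obtained from $f^{\sharp(A, \mathbf{k})}$ by extending scalars from $A$ to $R$. Since extension of scalars is a functor from $A$-coalgebras to $R$-coalgebras (Definition~\ref{def.extend-scalars}\textbf{(d)}) and $f^{\sharp(A, \mathbf{k})}$ is an $A$-coalgebra homomorphism, the latter map is an $R$-coalgebra homomorphism; hence so is $(\operatorname{id} \otimes f)^{\sharp(R, B)}$, which is exactly what is required.

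The hard part here is essentially bookkeeping rather than mathematical depth: making precise the two transitivity-of-extension-of-scalars isomorphisms above as $R$-coalgebra (not merely $R$-module) isomorphisms, and identifying the resulting $R$-coalgebra structure on $\underline{B} \otimes G$ with the one prescribed by the statement of the proposition. This parallels the verification already carried out for $\kappa$ in the proof of Proposition~\ref{prop.relative.composition.c1}; once those isomorphisms are in place, the functoriality of extension of scalars makes the rest automatic.
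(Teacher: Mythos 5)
Your proposal is correct and follows essentially the same route as the paper: both reduce the claim to the fact that $f^{\sharp\left(A,\mathbf{k}\right)}$ is an $A$-coalgebra homomorphism, extend scalars, and identify the resulting map with $\left(\operatorname*{id}\otimes f\right)^{\sharp}$ via a transitivity-of-base-change isomorphism checked on pure tensors (the paper's $\gamma$ plays the role of your chain of isomorphisms, and the paper's $\operatorname*{id}\nolimits_{B}\otimes f^{\sharp}$ is your $\operatorname*{id}\nolimits_{R}\otimes_{A}f^{\sharp\left(A,\mathbf{k}\right)}$ up to that identification). If anything, your version is slightly more scrupulous in tracking the full $R=\underline{B}\otimes\underline{A}$-coalgebra structure that Definition \ref{def.kA-coalg} \textbf{(b)} literally requires, where the paper phrases the argument in terms of $B$-coalgebra homomorphisms.
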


\begin{proof}
[Proof of Proposition \ref{prop.relative.composition.d}.]Since $f$ is a
$\left(  \mathbf{k},\underline{A}\right)  $-coalgebra homomorphism, the map
$f^{\sharp}=f^{\sharp\left(  A,\mathbf{k}\right)  }:\underline{A}\otimes
H\rightarrow G$ is an $A$-coalgebra homomorphism. Thus, the map
$\operatorname*{id}\nolimits_{B}\otimes f^{\sharp}:\underline{B}\otimes\left(
\underline{A}\otimes H\right)  \rightarrow\underline{B}\otimes G$ is a
$\underline{B}\otimes\underline{A}$-coalgebra homomorphism.

But the $\underline{B}$-linear map $\operatorname*{id}\otimes f:\underline{B}%
\otimes H\rightarrow\underline{B}\otimes G$ gives rise to a $\underline{B}%
\otimes\underline{A}$-linear map $\left(  \operatorname*{id}\otimes f\right)
^{\sharp\left(  \underline{B}\otimes\underline{A},\underline{B}\right)
}:\left(  \underline{B}\otimes\underline{A}\right)  \otimes_{B}\left(
\underline{B}\otimes H\right)  \rightarrow\underline{B}\otimes G$.

Now, let $\gamma$ be the canonical $\underline{B}\otimes\underline{A}$-module
isomorphism $\left(
\underline{B}\otimes\underline{A}\right)  \otimes_{B}\left(  \underline{B}%
\otimes H\right)  \rightarrow\underline{B}\otimes\left(  \underline{A}\otimes
H\right)  $ (sending each $\left(  b\otimes a\right)  \otimes_{B}\left(
b^{\prime}\otimes h\right)  \in\left(  \underline{B}\otimes\underline{A}%
\right)  \otimes_{B}\left(  \underline{B}\otimes H\right)  $ to $bb^{\prime
}\otimes\left(  a\otimes h\right)  $). Then, $\gamma$ is a $\underline{B}%
\otimes\underline{A}$-coalgebra isomorphism (this is easy to check). Hence,
$\left(  \operatorname*{id}\nolimits_{B}\otimes f^{\sharp}\right)  \circ
\gamma$ is a $\underline{B}\otimes\underline{A}$-coalgebra homomorphism (since
$\operatorname*{id}\nolimits_{B}\otimes f^{\sharp}$ and $\gamma$ are
$\underline{B}\otimes\underline{A}$-coalgebra homomorphisms).

Now, it is straightforward to see that $\left(  \operatorname*{id}\otimes
f\right)  ^{\sharp\left(  \underline{B}\otimes\underline{A},\underline{B}%
\right)  }=\left(  \operatorname*{id}\nolimits_{B}\otimes f^{\sharp}\right)
\circ\gamma$\ \ \ \ \footnote{Indeed, it suffices to check it on pure tensors,
i.e., to prove that
\[
\left(  \operatorname*{id}\otimes f\right)  ^{\sharp\left(  \underline{B}%
\otimes\underline{A},\underline{B}\right)  }\left(  \left(  b\otimes a\right)
\otimes_{B}\left(  b^{\prime}\otimes h\right)  \right)  =\left(  \left(
\operatorname*{id}\nolimits_{B}\otimes f^{\sharp}\right)  \circ\gamma\right)
\left(  \left(  b\otimes a\right)  \otimes_{B}\left(  b^{\prime}\otimes
h\right)  \right)
\]
for each $b\in B$, $a\in A$, $b^{\prime}\in B$ and $h\in H$. But this is easy
(both sides turn out to be $bb^{\prime}\otimes af\left(  h\right)  $).}.
Hence, the map $\left(  \operatorname*{id}\otimes f\right)  ^{\sharp\left(
\underline{B}\otimes\underline{A},\underline{B}\right)  }$ is a $\underline{B}%
\otimes\underline{A}$-coalgebra homomorphism (since $\left(
\operatorname*{id}\nolimits_{B}\otimes f^{\sharp}\right)  \circ\gamma$ is a
$\underline{B}\otimes\underline{A}$-coalgebra homomorphism). In other words,
$\operatorname*{id}\otimes f:\underline{B}\otimes H\rightarrow\underline{B}%
\otimes G$ is a $\left(  \underline{B},\underline{B}\otimes\underline{A}%
\right)  $-coalgebra homomorphism. This proves Proposition
\ref{prop.relative.composition.d}.
\end{proof}

\begin{proposition}
\label{prop.relative.composition.e}Let $\mathbf{k}$ be a commutative ring. Let
$A$ be a commutative $\mathbf{k}$-algebra. Let $B$ be a commutative
$A$-algebra. Let $H$ be a $\mathbf{k}$-coalgebra. Let $G$ be an $A$-coalgebra.
Let $I$ be a $B$-coalgebra. Let $f:H\rightarrow G$ be a $\left(
\mathbf{k},\underline{A}\right)  $-coalgebra homomorphism. Let $g:G\rightarrow
I$ be an $\left(  \underline{A},\underline{B}\right)  $-coalgebra
homomorphism. Then, $g\circ f:H\rightarrow I$ is a $\left(  \mathbf{k}%
,\underline{B}\right)  $-coalgebra homomorphism.
\end{proposition}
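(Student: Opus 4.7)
The plan is to reduce this to the preceding propositions (in particular, Propositions \ref{prop.relative.composition.a} and \ref{prop.relative.composition.c1}) by factoring $g$ through its $B$-linear ``lift'' $g^{\sharp(B,A)}$ and factoring $f$ through the canonical scalar-extension map from $A$ to $B$. Concretely, let $p:A\rightarrow B$ be the structure map making $B$ into an $A$-algebra, and let $p_{G}:G\rightarrow\underline{B}\otimes_{A}G$ be the canonical $A$-module map $u\mapsto 1\otimes_{A}u$ (as in Proposition \ref{prop.relative.composition.c1}).

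First, I would apply Proposition \ref{prop.relative.composition.c1} to $f$, $p$ and $p_{G}$: since $f$ is a $(\mathbf{k},\underline{A})$-coalgebra homomorphism, this proposition yields that the composite
\[
p_{G}\circ f:H\rightarrow\underline{B}\otimes_{A}G
\]
is a $(\mathbf{k},\underline{B})$-coalgebra homomorphism. Second, since $g$ is by hypothesis an $(\underline{A},\underline{B})$-coalgebra homomorphism, its lift $g^{\sharp(B,A)}:\underline{B}\otimes_{A}G\rightarrow I$ is (by the very definition of an $(\underline{A},\underline{B})$-coalgebra homomorphism) a $B$-coalgebra homomorphism. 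Third, applying Proposition \ref{prop.relative.composition.a} (with the base ring $\mathbf{k}$ there being our $\mathbf{k}$, with $A$ there being our $B$, and with the maps $f$ and $g$ there being $p_{G}\circ f$ and $g^{\sharp(B,A)}$ respectively), I obtain that the composite
\[
g^{\sharp(B,A)}\circ(p_{G}\circ f):H\rightarrow I
\]
is a $(\mathbf{k},\underline{B})$-coalgebra homomorphism.

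It remains to identify this composite with $g\circ f$. For any $u\in G$ we have $g^{\sharp(B,A)}(p_{G}(u))=g^{\sharp(B,A)}(1\otimes_{A}u)=1\cdot g(u)=g(u)$, so $g^{\sharp(B,A)}\circ p_{G}=g$ as maps $G\rightarrow I$. Consequently $g^{\sharp(B,A)}\circ(p_{G}\circ f)=(g^{\sharp(B,A)}\circ p_{G})\circ f=g\circ f$, and the claim follows.

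No step presents a genuine obstacle: the work has already been done inside Propositions \ref{prop.relative.composition.a} and \ref{prop.relative.composition.c1}. The only thing to be careful about is the bookkeeping of base rings, i.e., confirming that ``$(\mathbf{k},\underline{B})$-coalgebra homomorphism'' really is the right output of both intermediate steps, and that the $A$-linear identity $g^{\sharp(B,A)}\circ p_{G}=g$ holds literally (not merely up to the canonical isomorphism $A\otimes_{A}G\cong G$). Both checks are immediate from the definitions in Definition \ref{def.kA-coalg} and Proposition \ref{prop.relative.composition.c1}.
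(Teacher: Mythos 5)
Your proof is correct, and it is organized differently from the paper's. The paper proves Proposition \ref{prop.relative.composition.e} directly: it writes $\left(g\circ f\right)^{\sharp\left(\underline{B},\mathbf{k}\right)}=g^{\sharp\left(\underline{B},\underline{A}\right)}\circ\left(\operatorname{id}_{B}\otimes_{A}f^{\sharp\left(A,\mathbf{k}\right)}\right)\circ\delta$, where $\delta:\underline{B}\otimes H\rightarrow\underline{B}\otimes_{A}\left(\underline{A}\otimes H\right)$ is the canonical isomorphism, and checks that each factor is a $B$-coalgebra homomorphism. You instead factor $g\circ f$ itself (rather than its sharp) as $g^{\sharp\left(B,A\right)}\circ p_{G}\circ f$ and then cite Propositions \ref{prop.relative.composition.c1} and \ref{prop.relative.composition.a} as black boxes. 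The two arguments have the same computational core --- indeed, the proof of Proposition \ref{prop.relative.composition.c1} already contains the identity $\left(p_{G}\circ f\right)^{\sharp\left(B,\mathbf{k}\right)}=\left(\operatorname{id}_{B}\otimes_{A}f^{\sharp}\right)\circ\kappa$ with $\kappa=\delta$, so unwinding your citations recovers the paper's factorization --- but your version is more modular: it exhibits Proposition \ref{prop.relative.composition.e} as a purely formal consequence of the earlier closure properties, at the cost of routing through the intermediate object $\underline{B}\otimes_{A}G$ twice (once inside Proposition \ref{prop.relative.composition.c1} and once for $g^{\sharp}$), whereas the paper's self-contained computation handles everything in one pass. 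All the hypotheses you need are in place (in particular, $\underline{B}\otimes_{A}G$ is a $B$-coalgebra by extension of scalars, and the identity $g^{\sharp\left(B,A\right)}\circ p_{G}=g$ holds on the nose since $p_{G}\left(u\right)=1\otimes_{A}u$), so there is no gap.
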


\begin{proof}
[Proof of Proposition \ref{prop.relative.composition.e}.]Since $f$ is a
$\left(  \mathbf{k},\underline{A}\right)  $-coalgebra homomorphism, the map
$f^{\sharp\left(  A,\mathbf{k}\right)  }:\underline{A}\otimes H\rightarrow G$
is an $A$-coalgebra homomorphism. Thus, the map $\operatorname*{id}%
\nolimits_{B}\otimes_{A}f^{\sharp\left(  A,\mathbf{k}\right)  }:\underline{B}%
\otimes_{A}\left(  \underline{A}\otimes H\right)  \rightarrow\underline{B}%
\otimes_{A}G$ is a $B$-coalgebra homomorphism.

Since $g:G\rightarrow I$ is an $\left(  \underline{A},\underline{B}\right)
$-coalgebra homomorphism, the map $g^{\sharp\left(  \underline{B}%
,\underline{A}\right)  }:\underline{B}\otimes_{A}G\rightarrow I$ is a
$B$-coalgebra homomorphism.

Let $\delta:\underline{B}\otimes H\rightarrow\underline{B}\otimes_{A}\left(
\underline{A}\otimes H\right)  $ be the canonical $B$-module isomorphism
(sending each $b\otimes h$ to $b\otimes_{A}\left(  1\otimes h\right)  $).
Then, $\delta$ is a $B$-coalgebra isomorphism. Straightforward elementwise
computation shows that $\left(  g\circ f\right)  ^{\sharp\left(
\underline{B},\mathbf{k}\right)  }=g^{\sharp\left(  \underline{B}%
,\underline{A}\right)  }\circ\left(  \operatorname*{id}\nolimits_{B}%
\otimes_{A}f^{\sharp\left(  A,\mathbf{k}\right)  }\right)  \circ\delta$.
Hence, $\left(  g\circ f\right)  ^{\sharp\left(  \underline{B},\mathbf{k}%
\right)  }$ is a $B$-coalgebra homomorphism (since $g^{\sharp\left(
\underline{B},\underline{A}\right)  }$, $\operatorname*{id}\nolimits_{B}%
\otimes_{A}f^{\sharp\left(  A,\mathbf{k}\right)  }$ and $\delta$ are
$B$-coalgebra homomorphisms). In other words, $g\circ f:H\rightarrow I$ is a
$\left(  \mathbf{k},\underline{B}\right)  $-coalgebra homomorphism. This
proves Proposition \ref{prop.relative.composition.e}.
\end{proof}

With these basics in place, we can now \textquotedblleft
escalate\textquotedblright\ Theorem \ref{thm.ABS.hopf} to the following setting:

\begin{corollary}
\label{cor.ABS.hopf.esc}Let $\mathbf{k}$ be a commutative ring. Let $H$ be a
connected graded $\mathbf{k}$-Hopf algebra. Let $A$ be a commutative
$\mathbf{k}$-algebra. Let $\xi:H\rightarrow\underline{A}$ be a $\mathbf{k}%
$-algebra homomorphism.

\textbf{(a)} Then, there exists a unique graded $\left(  \mathbf{k}%
,\underline{A}\right)  $-coalgebra homomorphism $\Xi:H\rightarrow
\underline{A}\otimes\operatorname*{QSym}\nolimits_{\mathbf{k}}$ for which the
diagram
\begin{equation}%
\xymatrix{
H \ar[rr]^-{\Xi} \ar[dr]_{\xi} & & \underline{A} \otimes\QSym\ar
[dl]^{\id_A \otimes\varepsilon_P} \\
& \underline{A} &
}
\label{eq.cor.ABS.hopf.esc.a.diag}%
\end{equation}
is commutative (where we regard $\operatorname*{id}\nolimits_{A}%
\otimes\varepsilon_{P}:\underline{A}\otimes\operatorname*{QSym}%
\nolimits_{\mathbf{k}}\rightarrow\underline{A}\otimes\mathbf{k}$ as a map from
$\underline{A}\otimes\operatorname*{QSym}\nolimits_{\mathbf{k}}$ to
$\underline{A}$, by canonically identifying $\underline{A}\otimes\mathbf{k}$
with $\underline{A}$).

\textbf{(b)} This unique $\left(  \mathbf{k},\underline{A}\right)  $-coalgebra
homomorphism $\Xi:H\rightarrow\underline{A}\otimes\operatorname*{QSym}%
\nolimits_{\mathbf{k}}$ is a $\mathbf{k}$-algebra homomorphism.

\textbf{(c)} For every composition $\alpha=\left(  a_{1},a_{2},\ldots
,a_{k}\right)  $, define a $\mathbf{k}$-linear map $\xi_{\alpha}:H\rightarrow
A$ (not to $\mathbf{k}$ !) as the composition%
\[%
\xymatrixcolsep{3pc}
\xymatrix{
H \ar[r]^-{\Delta^{(k-1)}} & H^{\otimes k} \ar[r]^-{\pi_\alpha} & H^{\otimes
k}
\ar[r]^-{\xi^{\otimes k}} & A^{\otimes k} \ar[r]^-{m^{(k-1)}} & A}%
.
\]
(Recall that $\Delta^{\left(  k-1\right)  }:H\rightarrow H^{\otimes k}$ and
$m^{\left(  k-1\right)  }:A^{\otimes k}\rightarrow A$ are the
\textquotedblleft iterated comultiplication and multiplication
maps\textquotedblright; see \cite[\S 1.4]{Reiner} for their definitions. The
map $\pi_{\alpha}:H^{\otimes k}\rightarrow H^{\otimes k}$ is the one defined
in Definition \ref{def.pialpha}.)

Then, the unique $\left(  \mathbf{k},\underline{A}\right)  $-coalgebra
homomorphism $\Xi$ of Corollary \ref{cor.ABS.hopf.esc} \textbf{(a)} is given
by
\[
\Xi\left(  h\right)  =\sum_{\alpha\in\operatorname*{Comp}}\xi_{\alpha}\left(
h\right)  \otimes M_{\alpha}\ \ \ \ \ \ \ \ \ \ \text{for every }h\in H
\]
(in particular, the sum on the right hand side of this equality has only
finitely many nonzero addends).

\textbf{(d)} If the $\mathbf{k}$-coalgebra $H$ is cocommutative, then
$\Xi\left(  H\right)  $ is a subset of the subring $\underline{A}%
\otimes\Lambda_{\mathbf{k}}$ of $\underline{A}\otimes\operatorname*{QSym}%
\nolimits_{\mathbf{k}}$, where $\Lambda_{\mathbf{k}}$ is the $\mathbf{k}%
$-algebra of symmetric functions over $\mathbf{k}$.
\end{corollary}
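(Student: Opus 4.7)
The plan is to deduce Corollary \ref{cor.ABS.hopf.esc} by applying Theorem \ref{thm.ABS.hopf} with the base ring $\mathbf{k}$ replaced by $A$ and the Hopf algebra $H$ replaced by its extension of scalars $\underline{A}\otimes H$. By Definition \ref{def.extend-scalars}, $\underline{A}\otimes H$ is a connected graded $A$-Hopf algebra (connected since $(\underline{A}\otimes H)_{0}\cong\underline{A}$), and by Proposition \ref{prop.relative.alg} the $A$-linear map $\xi^{\sharp}:\underline{A}\otimes H\to A$ is an $A$-algebra homomorphism, so it may play the role of $\zeta$ in Theorem \ref{thm.ABS.hopf}. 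There is also a canonical $A$-Hopf algebra isomorphism $\operatorname{QSym}_{A}\cong\underline{A}\otimes\operatorname{QSym}_{\mathbf{k}}$ (since $(M_{\alpha})_{\alpha\in\operatorname{Comp}}$ is a $\mathbf{k}$-basis of $\operatorname{QSym}_{\mathbf{k}}$ whose Hopf-algebraic structure constants are integers), and under this identification the character $\varepsilon_{P}$ of $\operatorname{QSym}_{A}$ becomes $\operatorname{id}_{A}\otimes\varepsilon_{P}$.

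Theorem \ref{thm.ABS.hopf} parts \textbf{(a)} and \textbf{(b)} applied over $A$ then supply a unique graded $A$-coalgebra homomorphism $\Psi:\underline{A}\otimes H\to\underline{A}\otimes\operatorname{QSym}_{\mathbf{k}}$ with $(\operatorname{id}_{A}\otimes\varepsilon_{P})\circ\Psi=\xi^{\sharp}$, and this $\Psi$ is automatically an $A$-algebra homomorphism. I set $\Xi=\Psi^{\flat}$, so that $\Xi^{\sharp}=\Psi$. Then Definition \ref{def.kA-coalg}\textbf{(b)} identifies $\Xi$ as a $(\mathbf{k},\underline{A})$-coalgebra homomorphism, Proposition \ref{prop.relative.graded} shows it is graded, and Proposition \ref{prop.relative.alg} promotes it to a $\mathbf{k}$-algebra homomorphism. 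The commutativity of the diagram in part \textbf{(a)} is immediate from $\Xi(h)=\Psi(1\otimes h)$ and $\xi^{\sharp}(1\otimes h)=\xi(h)$. For uniqueness, any competitor $\Xi'$ yields $(\Xi')^{\sharp}$ satisfying the same hypotheses as $\Psi$; the uniqueness clause in Theorem \ref{thm.ABS.hopf}\textbf{(a)} forces $(\Xi')^{\sharp}=\Psi$, and applying $\flat$ gives $\Xi'=\Xi$. This handles parts \textbf{(a)} and \textbf{(b)}.

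For part \textbf{(c)}, Theorem \ref{thm.ABS.hopf}\textbf{(d)} applied over $A$ writes $\Psi(u)=\sum_{\alpha\in\operatorname{Comp}}(\xi^{\sharp})_{\alpha}(u)\cdot M_{\alpha}$ for every $u\in\underline{A}\otimes H$; specializing $u=1\otimes h$ and tracing through the composition $\Delta^{(k-1)}\to\pi_{\alpha}\to(\xi^{\sharp})^{\otimes_{A}k}\to m_{A}^{(k-1)}$ (using that the comultiplication of $\underline{A}\otimes H$ is $\operatorname{id}_{A}\otimes\Delta_{H}$) reveals $(\xi^{\sharp})_{\alpha}(1\otimes h)=\xi_{\alpha}(h)$, whence $\Xi(h)=\sum_{\alpha}\xi_{\alpha}(h)\otimes M_{\alpha}$ in $\underline{A}\otimes\operatorname{QSym}_{\mathbf{k}}$. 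Part \textbf{(d)} follows from Theorem \ref{thm.ABS.hopf}\textbf{(e)} applied to the cocommutative $A$-coalgebra $\underline{A}\otimes H$ under the canonical identification $\Lambda_{A}\cong\underline{A}\otimes\Lambda_{\mathbf{k}}$: since $\Psi(\underline{A}\otimes H)\subseteq\Lambda_{A}=\underline{A}\otimes\Lambda_{\mathbf{k}}$, one has $\Xi(h)=\Psi(1\otimes h)\in\underline{A}\otimes\Lambda_{\mathbf{k}}$.

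The main obstacle is purely bookkeeping: the identifications $\operatorname{QSym}_{A}=\underline{A}\otimes\operatorname{QSym}_{\mathbf{k}}$, $\Lambda_{A}=\underline{A}\otimes\Lambda_{\mathbf{k}}$, and $A^{\otimes_{A}k}\cong A$ must be invoked consistently, and the verification $(\xi^{\sharp})_{\alpha}(1\otimes h)=\xi_{\alpha}(h)$ (which translates the relative map of Theorem \ref{thm.ABS.hopf}\textbf{(c)} into the absolute formula appearing in part \textbf{(c)} here) requires one to carefully distinguish $\otimes_{\mathbf{k}}$ from $\otimes_{A}$ and to recognize that multiplication in $A^{\otimes_{A}k}$ collapses precisely to the iterated multiplication $m_{A}^{(k-1)}$ appearing in the definition of $\xi_{\alpha}$. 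Nothing here is deep, but the notational overhead is nontrivial.
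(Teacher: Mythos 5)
Your proposal is correct and follows essentially the same route as the paper's own proof: apply Theorem \ref{thm.ABS.hopf} over the base ring $A$ to the extended Hopf algebra $\underline{A}\otimes H$ with character $\xi^{\sharp}$, identify $\operatorname{QSym}_{\underline{A}}$ with $\underline{A}\otimes\operatorname{QSym}_{\mathbf{k}}$, and transport everything back through the $\sharp$/$\flat$ adjunction using Propositions \ref{prop.relative.alg} and \ref{prop.relative.graded}. The paper merely spells out in more detail the two bookkeeping steps you flag (the compatibility $\left(\left(\operatorname{id}_{A}\otimes\varepsilon_{P}\right)\circ\Xi\right)^{\sharp}=\left(\operatorname{id}_{A}\otimes\varepsilon_{P}\right)\circ\Xi^{\sharp}$ and the diagram chase verifying $\left(\xi^{\sharp}\right)_{\alpha}\circ\iota_{H}=\xi_{\alpha}$), both of which are routine as you indicate.
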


\begin{proof}
[Proof of Corollary \ref{cor.ABS.hopf.esc}.]We have $\underline{A}%
\otimes\operatorname*{QSym}\nolimits_{\mathbf{k}}\cong\operatorname*{QSym}%
\nolimits_{\underline{A}}$ as $A$-bialgebras canonically (since
$\operatorname*{QSym}\nolimits_{\mathbf{k}}$ is defined functorially in
$\mathbf{k}$, with a basis that is independent of $\mathbf{k}$).

Recall that we have defined a $\mathbf{k}$-algebra homomorphism $\varepsilon
_{P}:\operatorname*{QSym}\nolimits_{\mathbf{k}}\rightarrow\mathbf{k}$. We
shall now denote this $\varepsilon_{P}$ by $\varepsilon_{P,\mathbf{k}}$ in
order to stress that it depends on $\mathbf{k}$. Similarly, an $\mathbf{m}%
$-algebra homomorphism $\varepsilon_{P,\mathbf{m}}:\operatorname*{QSym}%
\nolimits_{\mathbf{m}}\rightarrow\mathbf{m}$ is defined for any commutative
ring $\mathbf{m}$. In particular, an $\underline{A}$-algebra homomorphism
$\varepsilon_{P,\underline{A}}:\operatorname*{QSym}\nolimits_{\underline{A}%
}\rightarrow\underline{A}$ is defined. The definitions of $\varepsilon
_{P,\mathbf{m}}$ for all $\mathbf{m}$ are essentially identical; thus, the map
$\varepsilon_{P,\underline{A}}:\operatorname*{QSym}\nolimits_{\underline{A}%
}\rightarrow\underline{A}$ can be identified with the map $\operatorname*{id}%
\nolimits_{A}\otimes\varepsilon_{P,\mathbf{k}}:\underline{A}\otimes
\operatorname*{QSym}\nolimits_{\mathbf{k}}\rightarrow\underline{A}%
\otimes\mathbf{k}$ (if we identify $\underline{A}\otimes\operatorname*{QSym}%
\nolimits_{\mathbf{k}}$ with $\operatorname*{QSym}\nolimits_{\underline{A}}$
and identify $\underline{A}\otimes\mathbf{k}$ with $\underline{A}$). We shall
use this identification below.

The $\mathbf{k}$-linear map $\xi:H\rightarrow\underline{A}$ induces an
$A$-linear map $\xi^{\sharp}:\underline{A}\otimes H\rightarrow\underline{A}$
(defined by $\xi^{\sharp}\left(  a\otimes h\right)  =a\xi\left(  h\right)  $
for all $a\in A$ and $h\in H$). Proposition \ref{prop.relative.alg} (applied
to $G=\underline{A}$ and $f=\xi$) shows that $\xi^{\sharp}$ is an $A$-algebra
homomorphism (since $\xi$ is a $\mathbf{k}$-algebra homomorphism).

Theorem \ref{thm.ABS.hopf} \textbf{(a)} (applied to $\underline{A}$,
$\underline{A}\otimes H$ and $\xi^{\sharp}$ instead of $\mathbf{k}$, $H$ and
$\zeta$) shows that there exists a unique graded $\underline{A}$-coalgebra
homomorphism $\Psi:\underline{A}\otimes H\rightarrow\operatorname*{QSym}%
\nolimits_{\underline{A}}$ for which the diagram%
\begin{equation}%
\xymatrix{
\underline{A} \otimes H \ar[rr]^-{\Psi} \ar[dr]_{\xi^{\sharp}}
& & \operatorname{QSym}_{\underline{A}} \ar[dl]^{\varepsilon_{P,\underline{A}%
}} \\
& \underline{A}&
}
\label{pf.cor.ABS.hopf.esc.a.1}%
\end{equation}
is commutative. Since we are identifying the map $\varepsilon_{P,\underline{A}%
}:\operatorname*{QSym}\nolimits_{\underline{A}}\rightarrow\underline{A}$ with
the map $\operatorname*{id}\nolimits_{A}\otimes\varepsilon_{P,\mathbf{k}%
}:\underline{A}\otimes\operatorname*{QSym}\nolimits_{\mathbf{k}}%
\rightarrow\underline{A}\otimes\mathbf{k}=\underline{A}$, we can rewrite this
as follows: There exists a unique graded $\underline{A}$-coalgebra
homomorphism $\Psi:\underline{A}\otimes H\rightarrow\underline{A}%
\otimes\operatorname*{QSym}\nolimits_{\mathbf{k}}$ for which the diagram%
\[%
\xymatrix{
\underline{A} \otimes H \ar[rr]^-{\Psi} \ar[dr]_{\xi^{\sharp}} & & \underline
{A} \otimes\QSym\ar[dl]^{\id_A \otimes\varepsilon_{P,\kk}} \\
& \underline{A}&
}%
\]
is commutative. In other words, there exists a unique graded $\underline{A}%
$-coalgebra homomorphism $\Psi:\underline{A}\otimes H\rightarrow
\underline{A}\otimes\operatorname*{QSym}\nolimits_{\mathbf{k}}$ such that
$\left(  \operatorname*{id}\nolimits_{A}\otimes\varepsilon_{P,\mathbf{k}%
}\right)  \circ\Psi=\xi^{\sharp}$. Let us refer to this observation as the
\textit{intermediate universal property}.

The $\left(  \mathbf{k},\underline{A}\right)  $-coalgebra homomorphisms
$H\rightarrow\underline{A}\otimes\operatorname*{QSym}\nolimits_{\mathbf{k}}$
are in a 1-to-1 correspondence with the $A$-coalgebra homomorphisms
$\underline{A}\otimes H\rightarrow\underline{A}\otimes\operatorname*{QSym}%
\nolimits_{\mathbf{k}}$, which is the same as the $A$-coalgebra homomorphisms
$\underline{A}\otimes H\rightarrow\operatorname*{QSym}\nolimits_{\underline{A}%
}$ (since $\underline{A}\otimes\operatorname*{QSym}\nolimits_{\mathbf{k}}%
\cong\operatorname*{QSym}\nolimits_{\underline{A}}$). The correspondence is
given by sending a $\left(  \mathbf{k},\underline{A}\right)  $-coalgebra
homomorphism $\Xi:H\rightarrow\underline{A}\otimes\operatorname*{QSym}%
\nolimits_{\mathbf{k}}$ to the $A$-coalgebra homomorphism $\Xi^{\sharp
}:\underline{A}\otimes H\rightarrow\underline{A}\otimes\operatorname*{QSym}%
\nolimits_{\mathbf{k}}$. Moreover, this correspondence has the property that
$\Xi$ is graded if and only if $\Xi^{\sharp}$ is (according to Proposition
\ref{prop.relative.graded}). Thus, this correspondence restricts to a
correspondence between the graded $\left(  \mathbf{k},\underline{A}\right)
$-coalgebra homomorphisms $H\rightarrow\underline{A}\otimes
\operatorname*{QSym}\nolimits_{\mathbf{k}}$ and the graded $A$-coalgebra
homomorphisms $\underline{A}\otimes H\rightarrow\underline{A}\otimes
\operatorname*{QSym}\nolimits_{\mathbf{k}}$. Using this correspondence, we can
rewrite the intermediate universal property as follows: There exists a unique
graded $\left(  \mathbf{k},\underline{A}\right)  $-coalgebra homomorphism
$\Xi:H\rightarrow\underline{A}\otimes\operatorname*{QSym}\nolimits_{\mathbf{k}%
}$ such that $\left(  \operatorname*{id}\nolimits_{A}\otimes\varepsilon
_{P,\mathbf{k}}\right)  \circ\Xi^{\sharp}=\xi^{\sharp}$. In other words, there
exists a unique graded $\left(  \mathbf{k},\underline{A}\right)  $-coalgebra
homomorphism $\Xi:H\rightarrow\underline{A}\otimes\operatorname*{QSym}%
\nolimits_{\mathbf{k}}$ such that $\left(  \left(  \operatorname*{id}%
\nolimits_{A}\otimes\varepsilon_{P,\mathbf{k}}\right)  \circ\Xi\right)
^{\sharp}=\xi^{\sharp}$ (since (\ref{pf.prop.relative.composition.a.1}) shows
that $\left(  \left(  \operatorname*{id}\nolimits_{A}\otimes\varepsilon
_{P,\mathbf{k}}\right)  \circ\Xi\right)  ^{\sharp}=\left(  \operatorname*{id}%
\nolimits_{A}\otimes\varepsilon_{P,\mathbf{k}}\right)  \circ\Xi^{\sharp}$). In
other words, there exists a unique graded $\left(  \mathbf{k},\underline{A}%
\right)  $-coalgebra homomorphism $\Xi:H\rightarrow\underline{A}%
\otimes\operatorname*{QSym}\nolimits_{\mathbf{k}}$ such that $\left(
\operatorname*{id}\nolimits_{A}\otimes\varepsilon_{P,\mathbf{k}}\right)
\circ\Xi=\xi$ (since the map (\ref{eq.def.kA-coalg.dies}) is a bijection). In
other words, there exists a unique graded $\left(  \mathbf{k},\underline{A}%
\right)  $-coalgebra homomorphism $\Xi:H\rightarrow\underline{A}%
\otimes\operatorname*{QSym}\nolimits_{\mathbf{k}}$ for which the diagram
(\ref{eq.cor.ABS.hopf.esc.a.diag}) is commutative. This proves Corollary
\ref{cor.ABS.hopf.esc} \textbf{(a)}.

By tracing back the above argument, we see that it yields an explicit
construction of the unique graded $\left(  \mathbf{k},\underline{A}\right)
$-coalgebra homomorphism $\Xi:H\rightarrow\underline{A}\otimes
\operatorname*{QSym}\nolimits_{\mathbf{k}}$ for which the diagram
(\ref{eq.cor.ABS.hopf.esc.a.diag}) is commutative: Namely, it is defined by
$\Xi^{\sharp}=\Psi$, where $\Psi$ is the unique graded $\underline{A}%
$-coalgebra homomorphism $\Psi:\underline{A}\otimes H\rightarrow
\operatorname*{QSym}\nolimits_{\underline{A}}$ for which the diagram
(\ref{pf.cor.ABS.hopf.esc.a.1}) is commutative. Consider these $\Xi$ and
$\Psi$.

Theorem \ref{thm.ABS.hopf} \textbf{(b)} (applied to $\underline{A}$,
$\underline{A}\otimes H$ and $\xi^{\sharp}$ instead of $\mathbf{k}$, $H$ and
$\zeta$) shows that $\Psi:\underline{A}\otimes H\rightarrow
\operatorname*{QSym}\nolimits_{\underline{A}}$ is an $\underline{A}$-Hopf
algebra homomorphism, thus an $\underline{A}$-algebra homomorphism. In other
words, $\Xi^{\sharp}:\underline{A}\otimes H\rightarrow\underline{A}%
\otimes\operatorname*{QSym}\nolimits_{\mathbf{k}}$ is an $\underline{A}%
$-algebra homomorphism (since $\Xi^{\sharp}:\underline{A}\otimes
H\rightarrow\underline{A}\otimes\operatorname*{QSym}\nolimits_{\mathbf{k}}$ is
the same as $\Psi:\underline{A}\otimes H\rightarrow\operatorname*{QSym}%
\nolimits_{\underline{A}}$, up to our identifications). Hence, $\Xi
:H\rightarrow\underline{A}\otimes\operatorname*{QSym}\nolimits_{\mathbf{k}}$
is a $\mathbf{k}$-algebra homomorphism as well (by Proposition
\ref{prop.relative.alg}, applied to $\underline{A}$, $\underline{A}%
\otimes\operatorname*{QSym}\nolimits_{\mathbf{k}}$ and $\Xi$ instead of $A$,
$G$ and $f$). This proves Corollary \ref{cor.ABS.hopf.esc} \textbf{(b)}.

\textbf{(c)} Theorem \ref{thm.ABS.hopf} \textbf{(d)} (applied to
$\underline{A}$, $\underline{A}\otimes H$ and $\xi^{\sharp}$ instead of
$\mathbf{k}$, $H$ and $\zeta$) shows that $\Psi$ is given by%
\begin{equation}
\Psi\left(  h\right)  =\sum_{\alpha\in\operatorname*{Comp}}\left(  \xi
^{\sharp}\right)  _{\alpha}\left(  h\right)  \cdot M_{\alpha}%
\ \ \ \ \ \ \ \ \ \ \text{for every }h\in\underline{A}\otimes H,
\label{pf.cor.ABS.hopf.esc.c.1}%
\end{equation}
where the map $\left(  \xi^{\sharp}\right)  _{\alpha}:\underline{A}\otimes
H\rightarrow\underline{A}$ is defined in the same way as the map
$\zeta_{\alpha}:H\rightarrow\mathbf{k}$ was defined in Theorem
\ref{thm.ABS.hopf} \textbf{(d)} (but with $\mathbf{k}$, $H$ and $\zeta$
replaced by $\underline{A}$, $\underline{A}\otimes H$ and $\xi^{\sharp}$).
Notice that (\ref{pf.cor.ABS.hopf.esc.c.1}) is an equality inside
$\operatorname*{QSym}\nolimits_{\underline{A}}$. Recalling that we are
identifying $\operatorname*{QSym}\nolimits_{\underline{A}}$ with
$\underline{A}\otimes\operatorname*{QSym}\nolimits_{\mathbf{k}}$, we can
rewrite it as an equality in $\underline{A}\otimes\operatorname*{QSym}%
\nolimits_{\mathbf{k}}$; it then takes the form%
\begin{equation}
\Psi\left(  h\right)  =\sum_{\alpha\in\operatorname*{Comp}}\left(  \xi
^{\sharp}\right)  _{\alpha}\left(  h\right)  \otimes M_{\alpha}%
\ \ \ \ \ \ \ \ \ \ \text{for every }h\in\underline{A}\otimes H.
\label{pf.cor.ABS.hopf.esc.c.2}%
\end{equation}

Let $\iota_{H}$ be the $\mathbf{k}$-module homomorphism%
\[
H\rightarrow\underline{A}\otimes H,\ \ \ \ \ \ \ \ \ \ h\mapsto1\otimes h.
\]
Also, for every $k\in\mathbb{N}$, we let $\iota_{k}$ be the $\mathbf{k}%
$-module homomorphism%
\[
H^{\otimes k}\rightarrow\left(  \underline{A}\otimes H\right)  ^{\otimes
_{\underline{A}}k},\ \ \ \ \ \ \ \ \ \ g\mapsto1\otimes g\in\underline{A}%
\otimes H^{\otimes k}\cong\left(  \underline{A}\otimes H\right)
^{\otimes_{\underline{A}}k}%
\]
(where $U^{\otimes_{\underline{A}}k}$ denotes the $k$-th tensor power of an
$\underline{A}$-module $U$); this homomorphism sends every $h_{1}\otimes
h_{2}\otimes\cdots\otimes h_{k}\in H^{\otimes k}$ to $\left(  1\otimes
h_{1}\right)  \otimes_{\underline{A}}\left(  1\otimes h_{2}\right)
\otimes_{\underline{A}}\cdots\otimes_{\underline{A}}\left(  1\otimes
h_{k}\right)  $.

On the other hand, fix some $\alpha\in\operatorname*{Comp}$. Write the
composition $\alpha$ in the form $\alpha=\left(  a_{1},a_{2},\ldots
,a_{k}\right)  $. The diagram%
\[%
\xymatrix@C=4pc@R=3pc{
H \ar[r]_{\Delta^{\left(k-1\right)}} \ar@/^3pc/[rrrr]^{\xi_\alpha}
\ar[d]^{\iota_H}
& H^{\otimes k} \ar[r]_{\pi_\alpha} \ar[d]^{\iota_k}
& H^{\otimes k} \ar[r]_{\xi^{\otimes k}} \ar[d]^{\iota_k}
& A^{\otimes k} \ar[r]_{m^{\left(k-1\right)}}
& A \ar[d]_{\id}
\\
\undA\otimes H \ar[r]^{\Delta^{\left(k-1\right)}} \ar@/_3pc/[rrrr]_{\left
(\xi^\sharp\right)_\alpha}
& \left(\undA\otimes H\right)^{\otimes_{\undA} k} \ar[r]^{\pi_\alpha}
& \left(\undA\otimes H\right)^{\otimes_{\undA} k} \ar[r]^-{\left(\xi
^\sharp\right)^{\otimes_{\undA} k}}
& \undA^{\otimes_{\undA} k} \ar[r]^{\cong}
& \undA}%
\]
is commutative\footnote{\textit{Proof.} In fact:
\par
\begin{itemize}
\item Its upper pentagon is commutative (by the definition of $\xi_{\alpha}$).
\par
\item Its lower pentagon is commutative (by the definition of $\left(
\xi^{\sharp}\right)  _{\alpha}$).
\par
\item Its left square is commutative (since the operation $\Delta^{\left(
k-1\right)  }$ on a $\mathbf{k}$-coalgebra is functorial with respect to the
base ring, i.e., commutes with extension of scalars).
\par
\item Its middle square is commutative (since the operation $\pi_{\alpha}$ on
a graded $\mathbf{k}$-module is functorial with respect to the base ring,
i.e., commutes with extension of scalars).
\par
\item Its right rectangle is commutative. (Indeed, every $h_{1},h_{2}%
,\ldots,h_{k}\in H$ satisfy%
\begin{align*}
&  \left(  \operatorname*{id}\circ m^{\left(  k-1\right)  }\circ\xi^{\otimes
k}\right)  \left(  h_{1}\otimes h_{2}\otimes\cdots\otimes h_{k}\right) \\
&  =m^{\left(  k-1\right)  }\left(  \underbrace{\xi^{\otimes k}\left(
h_{1}\otimes h_{2}\otimes\cdots\otimes h_{k}\right)  }_{=\xi\left(
h_{1}\right)  \otimes\xi\left(  h_{2}\right)  \otimes\cdots\otimes\xi\left(
h_{k}\right)  }\right)  =m^{\left(  k-1\right)  }\left(  \xi\left(
h_{1}\right)  \otimes\xi\left(  h_{2}\right)  \otimes\cdots\otimes\xi\left(
h_{k}\right)  \right) \\
&  =\xi\left(  h_{1}\right)  \xi\left(  h_{2}\right)  \cdots\xi\left(
h_{k}\right)
\end{align*}
and thus%
\begin{align*}
&  \left(  \left(  \xi^{\sharp}\right)  ^{\otimes_{\underline{A}}k}\circ
\iota_{k}\right)  \left(  h_{1}\otimes h_{2}\otimes\cdots\otimes h_{k}\right)
\\
&  =\left(  \xi^{\sharp}\right)  ^{\otimes_{\underline{A}}k}%
\underbrace{\left(  \iota_{k}\left(  h_{1}\otimes h_{2}\otimes\cdots\otimes
h_{k}\right)  \right)  }_{=\left(  1\otimes h_{1}\right)  \otimes
_{\underline{A}}\left(  1\otimes h_{2}\right)  \otimes_{\underline{A}}%
\cdots\otimes_{\underline{A}}\left(  1\otimes h_{k}\right)  }\\
&  =\left(  \xi^{\sharp}\right)  ^{\otimes_{\underline{A}}k}\left(  \left(
1\otimes h_{1}\right)  \otimes_{\underline{A}}\left(  1\otimes h_{2}\right)
\otimes_{\underline{A}}\cdots\otimes_{\underline{A}}\left(  1\otimes
h_{k}\right)  \right) \\
&  =\xi^{\sharp}\left(  1\otimes h_{1}\right)  \otimes_{\underline{A}}%
\xi^{\sharp}\left(  1\otimes h_{2}\right)  \otimes_{\underline{A}}%
\cdots\otimes_{\underline{A}}\xi^{\sharp}\left(  1\otimes h_{k}\right) \\
&  =\xi\left(  h_{1}\right)  \otimes_{\underline{A}}\xi\left(  h_{2}\right)
\otimes_{\underline{A}}\cdots\otimes_{\underline{A}}\xi\left(  h_{k}\right)
\ \ \ \ \ \ \ \ \ \ \left(  \text{since }\xi^{\sharp}\left(  1\otimes
y\right)  =\xi\left(  y\right)  \text{ for every }y\in H\right) \\
&  =\xi\left(  h_{1}\right)  \xi\left(  h_{2}\right)  \cdots\xi\left(
h_{k}\right)  \ \ \ \ \ \ \ \ \ \ \left(  \text{since }\underline{A}%
^{\otimes_{\underline{A}}k}\cong\underline{A}\right) \\
&  =\left(  \operatorname*{id}\circ m^{\left(  k-1\right)  }\circ\xi^{\otimes
k}\right)  \left(  h_{1}\otimes h_{2}\otimes\cdots\otimes h_{k}\right)  .
\end{align*}
Hence, $\left(  \xi^{\sharp}\right)  ^{\otimes_{\underline{A}}k}\circ\iota
_{k}=\operatorname*{id}\circ m^{\left(  k-1\right)  }\circ\xi^{\otimes k}$. In
other words, the right rectangle is commutative.)
\end{itemize}
}. Therefore, $\left(  \xi^{\sharp}\right)  _{\alpha}\circ\iota_{H}%
=\operatorname*{id}\circ\xi_{\alpha}=\xi_{\alpha}$.

Now, forget that we fixed $\alpha$. We thus have shown that%
\begin{equation}
\left(  \xi^{\sharp}\right)  _{\alpha}\circ\iota_{H}=\xi_{\alpha
}\ \ \ \ \ \ \ \ \ \ \text{for every }\alpha\in\operatorname*{Comp}.
\label{pf.cor.ABS.hopf.esc.c.5}%
\end{equation}

Now, every $h\in H$ satisfies%
\begin{align*}
\Xi\left(  h\right)   &  =\underbrace{\Xi^{\sharp}}_{=\Psi}\left(  1\otimes
h\right)  =\Psi\left(  1\otimes h\right) \\
&  =\sum_{\alpha\in\operatorname*{Comp}}\left(  \xi^{\sharp}\right)  _{\alpha
}\underbrace{\left(  1\otimes h\right)  }_{=\iota_{H}\left(  h\right)
}\otimes M_{\alpha}\ \ \ \ \ \ \ \ \ \ \left(  \text{by
(\ref{pf.cor.ABS.hopf.esc.c.2}), applied to }1\otimes h\text{ instead of
}h\right) \\
&  =\sum_{\alpha\in\operatorname*{Comp}}\underbrace{\left(  \xi^{\sharp
}\right)  _{\alpha}\left(  \iota_{H}\left(  h\right)  \right)  }_{=\left(
\left(  \xi^{\sharp}\right)  _{\alpha}\circ\iota_{H}\right)  \left(  h\right)
}\otimes M_{\alpha}=\sum_{\alpha\in\operatorname*{Comp}}\underbrace{\left(
\left(  \xi^{\sharp}\right)  _{\alpha}\circ\iota_{H}\right)  }_{\substack{=\xi
_{\alpha}\\\text{(by (\ref{pf.cor.ABS.hopf.esc.c.5}))}}}\left(  h\right)
\otimes M_{\alpha}\\
&  =\sum_{\alpha\in\operatorname*{Comp}}\xi_{\alpha}\left(  h\right)  \otimes
M_{\alpha}.
\end{align*}
This proves Corollary \ref{cor.ABS.hopf.esc} \textbf{(c)}.

\textbf{(d)} Assume that the $\mathbf{k}$-coalgebra $H$ is cocommutative.
Then, the $A$-coalgebra $\underline{A}\otimes H$ is cocommutative as well.

Let us first see why $\underline{A}\otimes\Lambda_{\mathbf{k}}$ is a subring
of $\underline{A}\otimes\operatorname*{QSym}\nolimits_{\mathbf{k}}$. Indeed,
recall that we are using the standard $A$-Hopf algebra isomorphism
$\underline{A}\otimes\operatorname*{QSym}\nolimits_{\mathbf{k}}\rightarrow
\operatorname*{QSym}\nolimits_{\underline{A}}$ to identify
$\operatorname*{QSym}\nolimits_{\underline{A}}$ with $\underline{A}%
\otimes\operatorname*{QSym}\nolimits_{\mathbf{k}}$. Similarly, let us use the
standard $A$-Hopf algebra isomorphism $\underline{A}\otimes\Lambda
_{\mathbf{k}}\rightarrow\Lambda_{\underline{A}}$ to identify $\Lambda
_{\underline{A}}$ with $\underline{A}\otimes\Lambda_{\mathbf{k}}$. Now,
$\underline{A}\otimes\Lambda_{\mathbf{k}}=\Lambda_{\underline{A}}%
\subseteq\operatorname*{QSym}\nolimits_{\underline{A}}=\underline{A}%
\otimes\operatorname*{QSym}\nolimits_{\mathbf{k}}$.

Theorem \ref{thm.ABS.hopf} \textbf{(e)} (applied to $\underline{A}$,
$\underline{A}\otimes H$ and $\xi^{\sharp}$ instead of $\mathbf{k}$, $H$ and
$\zeta$) shows that $\Psi\left(  \underline{A}\otimes H\right)  \subseteq
\Lambda_{\underline{A}}=\underline{A}\otimes\Lambda_{\mathbf{k}}$. Since
$\Psi=\Xi^{\sharp}$, this rewrites as $\Xi^{\sharp}\left(  \underline{A}%
\otimes H\right)  \subseteq\underline{A}\otimes\Lambda_{\mathbf{k}}$. But
$\Xi\left(  H\right)  \subseteq\Xi^{\sharp}\left(  \underline{A}\otimes
H\right)  $ (since every $h\in H$ satisfies $\Xi\left(  h\right)  =\Xi
^{\sharp}\left(  1\otimes h\right)  \in\Xi^{\sharp}\left(  \underline{A}%
\otimes H\right)  $). Hence, $\Xi\left(  H\right)  \subseteq\Xi^{\sharp
}\left(  \underline{A}\otimes H\right)  \subseteq\underline{A}\otimes
\Lambda_{\mathbf{k}}$. This proves Corollary \ref{cor.ABS.hopf.esc}
\textbf{(d)}.
\end{proof}

\begin{remark}
\label{rmk.xi1.convolution}Let $\mathbf{k}$, $H$, $A$ and $\xi$ be as in
Corollary \ref{cor.ABS.hopf.esc}. Then, the $\mathbf{k}$-module
$\operatorname*{Hom}\left(  H,A\right)  $ of all $\mathbf{k}$-linear maps from
$H$ to $A$ has a canonical structure of a $\mathbf{k}$-algebra; its unity is
the map $u_{A}\circ\varepsilon_{H}\in\operatorname*{Hom}\left(  H,A\right)  $
(where $u_{A}:\mathbf{k}\rightarrow A$ is the $\mathbf{k}$-linear map sending
$1$ to $1$), and its multiplication is the binary operation $\star$ defined by%
\[
f\star g=m_{A}\circ\left(  f\otimes g\right)  \circ\Delta_{H}:H\rightarrow
A\ \ \ \ \ \ \ \ \ \ \text{for every }f,g\in\operatorname*{Hom}\left(
H,A\right)
\]
(where $m_{A}$ is the $\mathbf{k}$-linear map $A\otimes A\rightarrow
A,\ a\otimes b\mapsto ab$). This $\mathbf{k}$-algebra is called the
\textit{convolution algebra} of $H$ and $A$; it is precisely the $\mathbf{k}%
$-algebra defined in \cite[Definition 1.4.1]{Reiner}. Using this $\mathbf{k}%
$-algebra, we can express the map $\xi_{\alpha}$ in Corollary \ref{cor.ABS.hopf.esc}
\textbf{(c)} as follows: For every composition $\alpha=\left(  a_{1}%
,a_{2},\ldots,a_{k}\right)  $, the map $\xi_{\alpha}:H\rightarrow A$ is given
by%
\[
\xi_{\alpha}=\left(  \xi\circ\pi_{a_{1}}\right)  \star\left(  \xi\circ
\pi_{a_{2}}\right)  \star\cdots\star\left(  \xi\circ\pi_{a_{k}}\right)  .
\]
(This follows easily from \cite[Exercise 1.4.23]{Reiner}.)
\end{remark}

\section{The second comultiplication on $\operatorname*{QSym}%
\nolimits_{\mathbf{k}}$}

\begin{convention}
In the following, we do \textbf{not} identify compositions with infinite
sequences, as several authors do. As a consequence, the composition $\left(
1,3\right)  $ does not equal the vector $\left(  1,3,0\right)  $ or the
infinite sequence $\left(  1,3,0,0,0,\ldots\right)  $.
\end{convention}

We now recall the definition of the \textit{second comultiplication} (a.k.a.
\textit{internal comultiplication}) of $\operatorname*{QSym}%
\nolimits_{\mathbf{k}}$. Several definitions of this operation appear in the
literature; we shall use the one in \cite[\S 11.39]{HazeWitt}:\footnote{The
second comultiplication seems to be as old as $\operatorname*{QSym}%
\nolimits_{\mathbf{k}}$; it first appeared in Gessel's \cite[\S 4]{Gessel}
(the same article where $\operatorname*{QSym}\nolimits_{\mathbf{k}}$ was first
defined).}

\begin{definition}
\label{def.QSym.secondcomult}\textbf{(a)} Given a $u\times v$-matrix
$A=\left(  a_{i,j}\right)  _{1\leq i\leq u,\ 1\leq j\leq v}\in\mathbb{N}%
^{u\times v}$ (where $u,v\in\mathbb{N}$) with nonnegative entries, we define
three tuples of nonnegative integers:

\begin{itemize}
\item The $v$-tuple $\operatorname*{column}A\in\mathbb{N}^{v}$ is the
$v$-tuple whose $j$-th entry is $\sum_{i=1}^{u}a_{i,j}$ (that is, the sum of
all entries in the $j$-th column of $A$) for each $j$. (In other words,
$\operatorname*{column}A$ is the sum of all rows of $A$, regarded as vectors.)

\item The $u$-tuple $\operatorname*{row}A\in\mathbb{N}^{u}$ is the $u$-tuple
whose $i$-th entry is $\sum_{j=1}^{v}a_{i,j}$ (that is, the sum of all entries
in the $i$-th row of $A$) for each $i$. (In other words, $\operatorname{row}A$
is the sum of all columns of $A$, regarded as vectors.)

\item The $uv$-tuple $\operatorname*{read}A\in\mathbb{N}^{uv}$ is the
$uv$-tuple whose $\left(  v\left(  i-1\right)  +j\right)  $-th entry is
$a_{i,j}$ for all $i\in\left\{  1,2,\ldots,u\right\}  $ and $j\in\left\{
1,2,\ldots,v\right\}  $. In other words,%
\begin{align*}
&  \operatorname*{read}A\\
&  =\left(  a_{1,1},a_{1,2},\ldots,a_{1,v},a_{2,1},a_{2,2},\ldots
,a_{2,v},\ldots,a_{u,1},a_{u,2},\ldots,a_{u,v}\right)  .
\end{align*}

\end{itemize}

We say that the matrix $A$ is \textit{column-reduced} if
$\operatorname*{column}A$ is a composition (i.e., contains no zero entries).
Equivalently, $A$ is column-reduced if and only if no column of $A$ is the $0$ vector.

We say that the matrix $A$ is \textit{row-reduced} if $\operatorname*{row}A$
is a composition (i.e., contains no zero entries). Equivalently, $A$ is
row-reduced if and only if no row of $A$ is the $0$ vector.

We say that the matrix $A$ is \textit{reduced} if $A$ is both column-reduced
and row-reduced.

\textbf{(b)} If $w\in\mathbb{N}^{k}$ is a $k$-tuple of nonnegative integers
(for some $k\in\mathbb{N}$), then $w^{\operatorname*{red}}$ shall mean the
composition obtained from $w$ by removing each entry that equals $0$. For
instance, $\left(  3,1,0,1,0,0,2\right)  ^{\operatorname*{red}}=\left(
3,1,1,2\right)  $.

\textbf{(c)} Let $\mathbb{N}_{\operatorname*{red}}^{\bullet,\bullet}$ denote
the set of all reduced matrices in $\mathbb{N}^{u\times v}$, where $u$ and $v$
both range over $\mathbb{N}$. In other words, we set
\[
\mathbb{N}_{\operatorname{red}}^{\bullet, \bullet} = \bigcup\limits_{\left(
u, v\right)  \in\mathbb{N}^{2}} \left\{  A \in\mathbb{N}^{u \times v} \mid A
\text{ is reduced} \right\}  .
\]

\textbf{(d)} Let $\Delta_{P}:\operatorname*{QSym}\nolimits_{\mathbf{k}%
}\rightarrow\operatorname*{QSym}\nolimits_{\mathbf{k}}\otimes
\operatorname*{QSym}\nolimits_{\mathbf{k}}$ be the $\mathbf{k}$-linear map
defined by setting%
\[
\Delta_{P}\left(  M_{\alpha}\right)  =\sum_{\substack{A\in\mathbb{N}%
_{\operatorname*{red}}^{\bullet,\bullet};\\\left(  \operatorname*{read}%
A\right)  ^{\operatorname*{red}}=\alpha}}M_{\operatorname*{row}A}\otimes
M_{\operatorname*{column}A}\ \ \ \ \ \ \ \ \ \ \text{for each }\alpha
\in\operatorname*{Comp}.
\]
This map $\Delta_{P}$ is called the \textit{second comultiplication} (or
\textit{internal comultiplication}) of $\operatorname*{QSym}%
\nolimits_{\mathbf{k}}$.

\textbf{(e)} Let $\tau$ denote the twist map $\tau_{\operatorname*{QSym}%
\nolimits_{\mathbf{k}},\operatorname*{QSym}\nolimits_{\mathbf{k}}%
}:\operatorname*{QSym}\nolimits_{\mathbf{k}}\otimes\operatorname*{QSym}%
\nolimits_{\mathbf{k}}\rightarrow\operatorname*{QSym}\nolimits_{\mathbf{k}%
}\otimes\operatorname*{QSym}\nolimits_{\mathbf{k}}$. Let $\Delta_{P}^{\prime
}=\tau\circ\Delta_{P}:\operatorname*{QSym}\nolimits_{\mathbf{k}}%
\rightarrow\operatorname*{QSym}\nolimits_{\mathbf{k}}\otimes
\operatorname*{QSym}\nolimits_{\mathbf{k}}$.
\end{definition}

\begin{example}
The matrix $\left(
\begin{array}
[c]{cccc}%
1 & 0 & 2 & 0\\
2 & 0 & 0 & 5\\
0 & 0 & 3 & 1
\end{array}
\right)  \in\mathbb{N}^{3\times4}$ is row-reduced but not column-reduced (and
thus not reduced). If we denote it by $A$, then $\operatorname*{row}A=\left(
3,7,4\right)  $ and $\operatorname*{column}A=\left(  3,0,5,6\right)  $ and
$\operatorname*{read}A=\left(  1,0,2,0,2,0,0,5,0,0,3,1\right)  $.
\end{example}

\begin{proposition}
\label{prop.QSym.secondbialg}The $\mathbf{k}$-algebra $\operatorname*{QSym}%
\nolimits_{\mathbf{k}}$, equipped with comultiplication $\Delta_{P}$ and
counit $\varepsilon_{P}$, is a $\mathbf{k}$-bialgebra (albeit not a connected
graded one, and not a Hopf algebra).
\end{proposition}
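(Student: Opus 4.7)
The proposition amounts to checking four conditions: (i) $\varepsilon_{P}$ is a $\mathbf{k}$-algebra homomorphism (already noted in the text), (ii) $\Delta_{P}$ is a $\mathbf{k}$-algebra homomorphism, (iii) $\varepsilon_{P}$ is a counit for $\Delta_{P}$, and (iv) $\Delta_{P}$ is coassociative. My plan is to obtain (ii) together with one half of (iii) via Corollary~\ref{cor.ABS.hopf.esc}, and to verify the remaining parts of (iii) and (iv) by direct combinatorial computation in the monomial basis.

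For (ii), I would apply Corollary~\ref{cor.ABS.hopf.esc} with $H = \operatorname*{QSym}\nolimits_{\mathbf{k}}$, $A = \operatorname*{QSym}\nolimits_{\mathbf{k}}$, and $\xi = \operatorname{id}$ (which is a $\mathbf{k}$-algebra homomorphism). The corollary produces a graded $\mathbf{k}$-algebra homomorphism $\Xi: \operatorname*{QSym}\nolimits_{\mathbf{k}} \to \operatorname*{QSym}\nolimits_{\mathbf{k}} \otimes \operatorname*{QSym}\nolimits_{\mathbf{k}}$ satisfying $\Xi(h) = \sum_{\alpha \in \operatorname*{Comp}} \xi_{\alpha}(h) \otimes M_{\alpha}$. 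The key step is then to identify $\Xi$ with $\Delta_{P}'$. Matching coefficients of $M_{\alpha}$ in the right tensor factor reduces this to establishing the identity
\[
\xi_{\alpha}(M_{\gamma}) \;=\; \sum_{\substack{A \in \mathbb{N}_{\operatorname*{red}}^{\bullet,\bullet} \\ \operatorname*{row} A = \alpha \\ (\operatorname*{read} A)^{\operatorname*{red}} = \gamma}} M_{\operatorname*{column} A}
\]
for every $\gamma, \alpha \in \operatorname*{Comp}$. Iterating (\ref{eq.DeltaM}) rewrites the left side as a sum over splittings of $\gamma$ into consecutive blocks whose block sums match the entries of $\alpha$, each term being a product of monomial quasisymmetric functions (one per block); expanding each such product via the quasi-shuffle rule recovers exactly the reduced matrices in the right-hand sum, where the positions of the nonzero entries in each row encode a term of the quasi-shuffle expansion and the column-reducedness condition corresponds to not introducing spurious zero columns. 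Once $\Xi = \Delta_{P}'$ is verified, $\Delta_{P}'$ is a $\mathbf{k}$-algebra homomorphism by Corollary~\ref{cor.ABS.hopf.esc}(b), and hence so is $\Delta_{P} = \tau \circ \Delta_{P}'$ (since $\tau$ is a $\mathbf{k}$-algebra automorphism of $\operatorname*{QSym}\nolimits_{\mathbf{k}} \otimes \operatorname*{QSym}\nolimits_{\mathbf{k}}$, by commutativity of $\operatorname*{QSym}\nolimits_{\mathbf{k}}$). Moreover, the commutative diagram (\ref{eq.cor.ABS.hopf.esc.a.diag}) gives $(\operatorname{id} \otimes \varepsilon_{P}) \circ \Xi = \operatorname{id}$, which through $\Xi = \Delta_{P}'$ translates to one side of the counit axiom: $(\varepsilon_{P} \otimes \operatorname{id}) \circ \Delta_{P} = \operatorname{id}$.

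The remaining counit axiom $(\operatorname{id} \otimes \varepsilon_{P}) \circ \Delta_{P} = \operatorname{id}$ follows from a short case analysis. Since $\varepsilon_{P}(M_{\beta})$ equals $1$ when $\beta$ has length at most $1$ and vanishes otherwise, only reduced matrices $A$ with at most one column contribute to $(\operatorname{id} \otimes \varepsilon_{P})\Delta_{P}(M_{\alpha})$; the empty $0 \times 0$ matrix contributes $1 = M_{()}$ exactly when $\alpha = ()$, and the single-column matrix obtained by writing $\alpha$ as a column contributes $M_{\alpha}$ otherwise. Coassociativity (iv) is the main obstacle. The natural combinatorial route is to unfold $(\Delta_{P} \otimes \operatorname{id})\Delta_{P}(M_{\alpha})$ and $(\operatorname{id} \otimes \Delta_{P})\Delta_{P}(M_{\alpha})$ as sums over compatible pairs of reduced matrices and to exhibit a common reindexing by reduced three-dimensional nonnegative integer arrays $(a_{i,j,k}) \in \mathbb{N}^{u \times v \times w}$ whose three marginal projections recover the three tensorand compositions and whose full reading (in the appropriate order, with zeros dropped) equals $\alpha$. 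Setting up this bijection cleanly is where the combinatorial weight of the proposition resides; alternatively, one may simply cite the classical argument given in \cite[\S 11.39]{HazeWitt}.
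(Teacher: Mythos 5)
Your handling of the algebra-homomorphism property and of the counit axiom is correct and, for the former, essentially the paper's route: identifying the map $\Xi$ produced by Corollary \ref{cor.ABS.hopf.esc} (for $H=A=\operatorname*{QSym}\nolimits_{\mathbf{k}}$, $\xi=\operatorname{id}$) with $\Delta_{P}^{\prime}$ via the identity $\xi_{\alpha}\left(M_{\gamma}\right)=\sum_{A}M_{\operatorname*{column}A}$ is precisely the content of Proposition \ref{prop.QSym.secondcomult.alt}, which the paper proves through the product formula of Proposition \ref{prop.QSym.M*M*M}; and extracting $\left(\varepsilon_{P}\otimes\operatorname{id}\right)\circ\Delta_{P}=\operatorname{id}$ from the commutative triangle is a nice observation (the paper simply cites \cite[\S 11.45]{HazeWitt} for the whole counit axiom). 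Your case analysis for the other half of the counit axiom is also correct.

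The real issue is coassociativity, which you yourself identify as the main obstacle and then leave unexecuted: the reindexing of pairs of compatible reduced matrices by reduced three-dimensional arrays is a genuine bijection that would need to be constructed and verified, and deferring to a citation abandons the self-contained argument. More importantly, it misses the paper's central trick: coassociativity comes for free from the uniqueness clause of the universal property. Concretely, Theorem \ref{thm.QSym.bernstein} \textbf{(c)} shows that $\left(\beta_{H}\otimes\operatorname{id}\right)\circ\beta_{H}=\left(\operatorname{id}\otimes\Delta_{P}^{\prime}\right)\circ\beta_{H}$ for any commutative connected graded $H$, by checking that both sides are graded $\left(\mathbf{k},\underline{H}\otimes\underline{\operatorname*{QSym}\nolimits_{\mathbf{k}}}\right)$-coalgebra homomorphisms fitting the same triangle as in Corollary \ref{cor.ABS.hopf.esc} \textbf{(a)} (applied over the base ring $\underline{H}\otimes\underline{\operatorname*{QSym}\nolimits_{\mathbf{k}}}$), hence equal by uniqueness. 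Specializing to $H=\operatorname*{QSym}\nolimits_{\mathbf{k}}$ and substituting $\beta_{\operatorname*{QSym}\nolimits_{\mathbf{k}}}=\Delta_{P}^{\prime}$ gives $\left(\Delta_{P}^{\prime}\otimes\operatorname{id}\right)\circ\Delta_{P}^{\prime}=\left(\operatorname{id}\otimes\Delta_{P}^{\prime}\right)\circ\Delta_{P}^{\prime}$, and coassociativity of $\Delta_{P}=\tau\circ\Delta_{P}^{\prime}$ follows. Since you already have every ingredient in hand, you should replace the three-dimensional-array sketch with this uniqueness argument; as written, the hardest quarter of the proposition rests on a bijection you have not actually built.
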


Proposition \ref{prop.QSym.secondbialg} is a well-known fact (appearing, for
example, in \cite[first paragraph of \S 3]{MalReu95}), but we shall actually
derive it further below using our results.

\section{The (generalized) Bernstein homomorphism}

Let us now define the Bernstein homomorphism of a commutative connected graded
$\mathbf{k}$-Hopf algebra, generalizing \cite[\S 18.24]{HazeWitt}:

\begin{definition}
\label{def.bernstein}Let $\mathbf{k}$ be a commutative ring. Let $H$ be a
commutative connected graded $\mathbf{k}$-Hopf algebra. For every composition
$\alpha=\left(  a_{1},a_{2},\ldots,a_{k}\right)  $, define a $\mathbf{k}%
$-linear map $\xi_{\alpha}:H\rightarrow H$ (not to $\mathbf{k}$ !) as the
composition%
\[%
\xymatrixcolsep{3pc}
\xymatrix{
H \ar[r]^{\Delta^{(k-1)}} & H^{\otimes k} \ar[r]^-{\pi_\alpha} & H^{\otimes k}
\ar[r]^-{m^{(k-1)}} & H
}%
.
\]
(Recall that $\Delta^{\left(  k-1\right)  }:H\rightarrow H^{\otimes k}$ and
$m^{\left(  k-1\right)  }:H^{\otimes k}\rightarrow H$ are the
\textquotedblleft iterated comultiplication and multiplication
maps\textquotedblright; see \cite[\S 1.4]{Reiner} for their definitions. The
map $\pi_{\alpha}:H^{\otimes k}\rightarrow H^{\otimes k}$ is the one defined
in Definition \ref{def.pialpha}.) Define a map $\beta_{H}:H\rightarrow
\underline{H}\otimes\operatorname*{QSym}\nolimits_{\mathbf{k}}$ by%
\begin{equation}
\beta_{H}\left(  h\right)  =\sum_{\alpha\in\operatorname*{Comp}}\xi_{\alpha
}\left(  h\right)  \otimes M_{\alpha}\ \ \ \ \ \ \ \ \ \ \text{for every }h\in
H. \label{eq.def.bernstein.def}%
\end{equation}
It is easy to see that this map $\beta_{H}$ is well-defined (i.e., the sum on
the right hand side of (\ref{eq.def.bernstein.def}) has only finitely many
nonzero addends\footnotemark) and $\mathbf{k}$-linear.
\end{definition}

\footnotetext{\textit{Proof.} Let $h\in H$. Then, there exists some
$N\in\mathbb{N}$ such that $h\in H_{0}+H_{1}+\cdots+H_{N-1}$ (since $h\in
H=\bigoplus\limits_{i\in\mathbb{N}}H_{i}$). Consider this $N$. Now, it is easy
to see that every composition $\alpha=\left(  a_{1},a_{2},\ldots,a_{k}\right)
$ of size $\geq N$ satisfies $\left(  \pi_{\alpha}\circ\Delta^{\left(
k-1\right)  }\right)  \left(  h\right)  =0$ (because $\Delta^{\left(
k-1\right)  }\left(  h\right)  $ is concentrated in the first $N$ homogeneous
components of the graded $\mathbf{k}$-module $H^{\otimes k}$, and all of these
components are annihilated by $\pi_{\alpha}$) and therefore $\xi_{\alpha
}\left(  h\right)  =0$. Thus, the sum on the right hand side of
(\ref{eq.def.bernstein.def}) has only finitely many nonzero addends (namely,
all its addends with $\left\vert \alpha\right\vert \geq N$ are $0$).}

\begin{remark}
\label{rmk.xi2.convolution}Let $\mathbf{k}$ and $H$ be as in Definition
\ref{def.bernstein}. Then, the $\mathbf{k}$-module $\operatorname*{Hom}\left(
H,H\right)  $ of all $\mathbf{k}$-linear maps from $H$ to $H$ has a canonical
structure of a $\mathbf{k}$-algebra, defined as in Remark
\ref{rmk.xi1.convolution} (for $A=H$). Using this $\mathbf{k}$-algebra, we can
express the map $\xi_{\alpha}$ from Definition \ref{def.bernstein} as
follows: For every composition $\alpha=\left(  a_{1},a_{2},\ldots
,a_{k}\right)  $, the map $\xi_{\alpha}:H\rightarrow H$ is given by%
\[
\xi_{\alpha}=\pi_{a_{1}}\star\pi_{a_{2}}\star\cdots\star\pi_{a_{k}}.
\]
(This follows easily from \cite[Exercise 1.4.23]{Reiner}.)
\end{remark}

The graded $\mathbf{k}$-Hopf algebra $\operatorname*{QSym}%
\nolimits_{\mathbf{k}}$ is commutative and connected; thus, Definition
\ref{def.bernstein} (applied to $H=\operatorname*{QSym}\nolimits_{\mathbf{k}}%
$) constructs a $\mathbf{k}$-linear map $\beta_{\operatorname*{QSym}%
\nolimits_{\mathbf{k}}}:\operatorname*{QSym}\nolimits_{\mathbf{k}}%
\rightarrow\underline{\operatorname*{QSym}\nolimits_{\mathbf{k}}}%
\otimes\operatorname*{QSym}\nolimits_{\mathbf{k}}$. We shall now prove that
this map is identical with the $\Delta_{P}^{\prime}$ from Definition
\ref{def.QSym.secondcomult} \textbf{(e)}:

\begin{proposition}
\label{prop.QSym.secondcomult.alt}We have $\beta_{\operatorname*{QSym}%
\nolimits_{\mathbf{k}}}=\Delta_{P}^{\prime}$.
\end{proposition}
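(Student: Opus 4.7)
The plan is to verify the identity by applying both sides to the basis element $M_\alpha$ and matching the resulting expansions combinatorially. First, I would unfold $\beta_{\operatorname{QSym}_{\mathbf{k}}}(M_\alpha)$ from Definition \ref{def.bernstein} by computing $\xi_\beta(M_\alpha)$ for each composition $\beta = (b_1, \ldots, b_k)$. Iterating (\ref{eq.DeltaM}) gives
\[
\Delta^{(k-1)}(M_\alpha) = \sum_{\alpha = \gamma_1 \gamma_2 \cdots \gamma_k} M_{\gamma_1} \otimes M_{\gamma_2} \otimes \cdots \otimes M_{\gamma_k},
\]
where the sum ranges over all ways of writing $\alpha$ as the concatenation of $k$ (possibly empty) sub-compositions. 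Applying $\pi_\beta$ restricts the sum to tuples with $|\gamma_i| = b_i$ (which forces each $\gamma_i$ to be nonempty, since $b_i \geq 1$), and then $m^{(k-1)}$ converts the tensor product into an ordinary product:
\[
\xi_\beta(M_\alpha) = \sum_{\substack{\alpha = \gamma_1 \gamma_2 \cdots \gamma_k \\ |\gamma_i| = b_i \text{ for all } i}} M_{\gamma_1} M_{\gamma_2} \cdots M_{\gamma_k}.
\]

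The substantive step is then to invoke the overlapping-shuffle (quasi-shuffle) expansion for products of monomial quasisymmetric functions: for any nonempty compositions $\gamma_1, \ldots, \gamma_k$,
\[
M_{\gamma_1} M_{\gamma_2} \cdots M_{\gamma_k} = \sum_A M_{\operatorname{column} A},
\]
where $A$ ranges over reduced matrices with exactly $k$ rows such that, for each $i$, removing the zero entries from the $i$-th row of $A$ produces the composition $\gamma_i$. For $k = 2$ this is the standard quasi-shuffle formula, appearing in \cite[Chapter 5]{Reiner}; the general case follows by induction using associativity of multiplication. (The nonemptiness of each $\gamma_i$ is exactly what makes $A$ automatically row-reduced in addition to column-reduced.)

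Substituting this into $\beta_{\operatorname{QSym}_{\mathbf{k}}}(M_\alpha) = \sum_\beta \xi_\beta(M_\alpha) \otimes M_\beta$ yields a triple sum over $\beta$, over concatenations $\alpha = \gamma_1 \cdots \gamma_k$ with $|\gamma_i| = b_i$, and over reduced matrices $A$ whose $i$-th row reduces to $\gamma_i$. The combinatorial heart of the proof is the bijection between such triples $(\beta, (\gamma_1, \ldots, \gamma_k), A)$ and single reduced matrices $A \in \mathbb{N}_{\operatorname{red}}^{\bullet, \bullet}$ satisfying $(\operatorname{read} A)^{\operatorname{red}} = \alpha$: given such an $A$, one recovers $k$ as its number of rows, $\beta = \operatorname{row} A$, and each $\gamma_i$ as the zero-stripped $i$-th row, while $\gamma_1 \gamma_2 \cdots \gamma_k = (\operatorname{read} A)^{\operatorname{red}} = \alpha$. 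Under this identification the triple sum collapses to
\[
\beta_{\operatorname{QSym}_{\mathbf{k}}}(M_\alpha) = \sum_{\substack{A \in \mathbb{N}_{\operatorname{red}}^{\bullet,\bullet} \\ (\operatorname{read} A)^{\operatorname{red}} = \alpha}} M_{\operatorname{column} A} \otimes M_{\operatorname{row} A},
\]
which is precisely $\tau(\Delta_P(M_\alpha)) = \Delta_P'(M_\alpha)$ by Definition \ref{def.QSym.secondcomult}.

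The main obstacle is invoking (or re-proving) the product formula for $M_{\gamma_1} \cdots M_{\gamma_k}$; everything else is an indexing exercise. As a backup strategy, one could instead use Corollary \ref{cor.ABS.hopf.esc}\textbf{(a)} to characterize $\beta_{\operatorname{QSym}_{\mathbf{k}}}$ as the unique graded $(\mathbf{k}, \underline{\operatorname{QSym}_{\mathbf{k}}})$-coalgebra homomorphism whose composition with $\operatorname{id} \otimes \varepsilon_P$ is $\operatorname{id}$, and then verify that $\Delta_P'$ satisfies the three conditions: gradedness (clear because every $A$ with $(\operatorname{read} A)^{\operatorname{red}} = \alpha$ has $|\operatorname{row} A| = |\alpha|$), the counit condition (the only reduced $A$ with $\ell(\operatorname{row} A) \leq 1$ is the single-row matrix equal to $\alpha$), and the $(\mathbf{k}, \underline{\operatorname{QSym}})$-coalgebra property. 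The last of these, however, amounts to a nontrivial compatibility between $\Delta$ and $\Delta_P$, which does not appear easier to check than the direct approach above.
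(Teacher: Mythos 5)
Your proposal is correct and follows essentially the same route as the paper: expand $\Delta^{(k-1)}M_\alpha$ via iterated concatenation-splitting, apply $\pi_\beta$ and $m^{(k-1)}$, invoke the $k$-fold quasi-shuffle product formula $M_{\gamma_1}\cdots M_{\gamma_k}=\sum_A M_{\operatorname{column}A}$ over column-reduced matrices (the paper's Proposition \ref{prop.QSym.M*M*M}), and collapse the resulting sum over reduced matrices $A$ with $(\operatorname{read}A)^{\operatorname{red}}=\alpha$. Your observation that nonemptiness of the $\gamma_i$ (forced by $b_i\geq 1$) is what upgrades column-reduced to reduced is exactly the paper's equivalence (\ref{pf.prop.QSym.secondcomult.alt.helper5}).
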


Before we prove this, let us recall a basic formula for multiplication of
monomial quasisymmetric functions:

\begin{proposition}
\label{prop.QSym.M*M*M}Let $k\in\mathbb{N}$. Let $\alpha_{1},\alpha_{2}%
,\ldots,\alpha_{k}$ be $k$ compositions. Let $\mathbb{N}_{\operatorname*{Cred}%
}^{k,\bullet}$ denote the set of all column-reduced matrices in $\mathbb{N}%
^{k\times v}$ with $v$ ranging over $\mathbb{N}$. In other words, let
\[
\mathbb{N}_{\operatorname{Cred}}^{k, \bullet} = \bigcup\limits_{v
\in\mathbb{N}} \left\{  A \in\mathbb{N}^{k \times v} \mid A \text{ is
column-reduced} \right\}  .
\]
Then,%
\[
M_{\alpha_{1}}M_{\alpha_{2}}\cdots M_{\alpha_{k}}=\sum_{\substack{A\in
\mathbb{N}_{\operatorname*{Cred}}^{k,\bullet};\\\left(  A_{g,\bullet}\right)
^{\operatorname*{red}}=\alpha_{g}\text{ for each }g}}M_{\operatorname*{column}%
A}.
\]
Here, $A_{i,\bullet}$ denotes the $i$-th row of $A$ (regarded as a list of
nonnegative integers).
\end{proposition}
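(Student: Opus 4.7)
The plan is to give a direct combinatorial proof by expanding both sides as sums of monomials in the $x_i$ and constructing an explicit bijection between their indexing sets.

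First, I would expand the left-hand side using Definition \ref{def.Malpha}: writing $\ell_g$ for the length of $\alpha_g$, we have
$$M_{\alpha_g} = \sum_{i^{(g)}_1 < i^{(g)}_2 < \cdots < i^{(g)}_{\ell_g}} x_{i^{(g)}_1}^{(\alpha_g)_1} x_{i^{(g)}_2}^{(\alpha_g)_2} \cdots x_{i^{(g)}_{\ell_g}}^{(\alpha_g)_{\ell_g}},$$
so that $M_{\alpha_1} M_{\alpha_2} \cdots M_{\alpha_k}$ becomes a sum indexed by $k$-tuples $(I^{(1)}, I^{(2)}, \ldots, I^{(k)})$ of strictly increasing finite sequences of positive integers, with $I^{(g)}$ of length $\ell_g$ and contributing the monomial displayed above using its entries as exponents.

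The heart of the proof is the bijection. Given such a tuple of sequences, let $i_1 < i_2 < \cdots < i_v$ be the elements of $I^{(1)} \cup I^{(2)} \cup \cdots \cup I^{(k)}$ listed in increasing order, and define a $k \times v$ matrix $A = (A_{g,j})$ by the rule $A_{g,j} = (\alpha_g)_l$ when $i_j = I^{(g)}_l$ for some (necessarily unique) $l$, and $A_{g,j} = 0$ otherwise. Three easy verifications then give: (i) $A$ is column-reduced, since each $i_j$ lies in some $I^{(g)}$ and therefore column $j$ of $A$ has a nonzero entry; (ii) $(A_{g,\bullet})^{\operatorname*{red}} = \alpha_g$ for each $g$, as the nonzero entries of row $g$ read left to right are exactly $(\alpha_g)_1, (\alpha_g)_2, \ldots, (\alpha_g)_{\ell_g}$; and (iii) the contributed monomial equals $x_{i_1}^{c_1} x_{i_2}^{c_2} \cdots x_{i_v}^{c_v}$, where $(c_1, c_2, \ldots, c_v) = \operatorname*{column} A$.

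The inverse construction is equally immediate: a pair consisting of a matrix $A \in \mathbb{N}_{\operatorname*{Cred}}^{k,\bullet}$ satisfying $(A_{g,\bullet})^{\operatorname*{red}} = \alpha_g$ for every $g$, together with a strictly increasing $v$-tuple $i_1 < i_2 < \cdots < i_v$ (where $v$ is the number of columns of $A$), recovers $I^{(g)}$ as the list of those $i_j$ with $A_{g,j} \neq 0$. Collecting terms under the bijection gives
$$M_{\alpha_1} M_{\alpha_2} \cdots M_{\alpha_k} = \sum_A \sum_{i_1 < i_2 < \cdots < i_v} x_{i_1}^{c_1} x_{i_2}^{c_2} \cdots x_{i_v}^{c_v} = \sum_A M_{\operatorname*{column} A},$$
which is the claimed identity (the inner sum being $M_{\operatorname*{column} A}$ by Definition \ref{def.Malpha}, for which $\operatorname*{column} A$ must be a composition — precisely the column-reduced condition). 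There is no substantive obstacle; the entire argument is clerical, and the only real work lies in organizing the bijection cleanly.
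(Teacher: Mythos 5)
Your proof is correct and is essentially the paper's own argument in different clothing: both expand the product into monomials and group the terms according to a column-reduced matrix, and your bijection between tuples of increasing index sequences and pairs (reduced matrix, increasing $v$-tuple of column positions) is exactly the paper's fibering of finite-support matrices $B$ over their column-reductions $B^{\operatorname{Cred}}=A$, with the inner sum over each fiber evaluating to $M_{\operatorname{column}A}$. No substantive difference, and no gaps.
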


Notice that the $k=2$ case of Proposition \ref{prop.QSym.M*M*M} is a
restatement of the standard formula for the multiplication of monomial
quasisymmetric functions (e.g., \cite[Proposition 5.1.3]{Reiner}%
\footnote{Actually, \cite[Proposition 5.1.3]{Reiner} is slightly more general
(the $k=2$ case of Proposition \ref{prop.QSym.M*M*M} is obtained from
\cite[Proposition 5.1.3]{Reiner} by setting $I=\left\{  1,2,3,\ldots\right\}
$). That said, our proof can easily be extended to work in this greater
generality.} or \cite[\S 11.26]{HazeWitt}). The general case is still
classical, but since an explicit proof is hard to locate in the literature,
let me sketch it here.

\begin{proof}
[Proof of Proposition \ref{prop.QSym.M*M*M}.]We begin by introducing notations:

\begin{itemize}
\item Let $\mathbb{N}^{k,\infty}$ denote the set of all matrices with $k$ rows
(labelled $1,2,\ldots,k$) and countably many columns (labelled $1,2,3,\ldots$)
whose entries all belong to $\mathbb{N}$.

\item Let $\mathbb{N}_{\operatorname*{fin}}^{k,\infty}$ denote the set of all
matrices in $\mathbb{N}^{k,\infty}$ which have only finitely many nonzero entries.

\item Let $\mathbb{N}^{\infty}$ denote the set of all infinite sequences
$\left(  a_{1},a_{2},a_{3},\ldots\right)  $ of elements of $\mathbb{N}$.

\item Let $\mathbb{N}_{\operatorname*{fin}}^{\infty}$ denote the set of all
sequences in $\mathbb{N}^{\infty}$ which have only finitely many nonzero entries.

\item For every $B\in\mathbb{N}_{\operatorname*{fin}}^{k,\infty}$ and
$i\in\left\{  1,2,\ldots,k\right\}  $, we let $B_{i,\bullet}\in\mathbb{N}%
_{\operatorname*{fin}}^{\infty}$ be the $i$-th row of $B$.

\item For every $B=\left(  b_{i,j}\right)  _{1\leq i\leq k,\ 1\leq j}%
\in\mathbb{N}_{\operatorname*{fin}}^{k,\infty}$, we let
$\operatorname*{column}B\in\mathbb{N}_{\operatorname*{fin}}^{\infty}$ be the
sequence whose $j$-th entry is $\sum_{i=1}^{k}b_{i,j}$ (that is, the sum of
all entries in the $j$-th column of $B$) for each $j$. (In other words,
$\operatorname*{column}B$ is the sum of all rows of $B$, regarded as vectors.)

\item We extend Definition \ref{def.QSym.secondcomult} \textbf{(b)} to the
case when $w\in\mathbb{N}_{\operatorname*{fin}}^{\infty}$: If $w\in
\mathbb{N}_{\operatorname*{fin}}^{\infty}$, then $w^{\operatorname*{red}}$
shall mean the composition obtained from $w$ by removing each entry that
equals $0$\ \ \ \ \footnote{Here is a more rigorous definition of
$w^{\operatorname*{red}}$: Let $w=\left(  w_{1},w_{2},w_{3},\ldots\right)  $.
Let $\mathcal{J}$ be the set of all positive integers $j$ such that $w_{j}%
\neq0$. Let $\left(  j_{1}<j_{2}<\cdots<j_{h}\right)  $ be the list of all
elements of $\mathcal{J}$, in increasing order. Then, $w^{\operatorname*{red}%
}$ is defined to be the composition $\left(  w_{j_{1}},w_{j_{2}}%
,\ldots,w_{j_{h}}\right)  $.
\par
This rigorous definition of $w^{\operatorname*{red}}$ has the additional
advantage of making sense in greater generality than \textquotedblleft remove
each entry that equals $0$\textquotedblright; namely, it still works when
$w\in\mathbb{N}_{\operatorname*{fin}}^{I}$ for some totally ordered set $I$.}.

\item For every $\beta=\left(  b_{1},b_{2},b_{3},\ldots\right)  \in
\mathbb{N}_{\operatorname*{fin}}^{\infty}$, we define a monomial
$\mathbf{x}^{\beta}$ in the indeterminates $x_{1},x_{2},x_{3},\ldots$ by%
\[
\mathbf{x}^{\beta}=x_{1}^{b_{1}}x_{2}^{b_{2}}x_{3}^{b_{3}}\cdots.
\]

\end{itemize}

Then, it is easy to see that%
\begin{equation}
M_{\alpha}=\sum_{\substack{\beta\in\mathbb{N}_{\operatorname*{fin}}^{\infty
};\\\beta^{\operatorname*{red}}=\alpha}}\mathbf{x}^{\beta}%
\ \ \ \ \ \ \ \ \ \ \text{for every composition }\alpha.
\label{pf.prop.QSym.M*M*M.1}%
\end{equation}
Now,%
\begin{align}
&  M_{\alpha_{1}}M_{\alpha_{2}}\cdots M_{\alpha_{k}}\nonumber\\
&  =\prod_{g=1}^{k}\underbrace{M_{\alpha_{g}}}_{\substack{=\sum
_{\substack{\beta\in\mathbb{N}_{\operatorname*{fin}}^{\infty};\\\beta
^{\operatorname*{red}}=\alpha_{g}}}\mathbf{x}^{\beta}\\\text{(by
(\ref{pf.prop.QSym.M*M*M.1}))}}}=\prod_{g=1}^{k}\ \ \sum_{\substack{\beta
\in\mathbb{N}_{\operatorname*{fin}}^{\infty};\\\beta^{\operatorname*{red}%
}=\alpha_{g}}}\mathbf{x}^{\beta}\nonumber\\
&  =\sum_{\substack{\left(  \beta_{1},\beta_{2},\ldots,\beta_{k}\right)
\in\left(  \mathbb{N}_{\operatorname*{fin}}^{\infty}\right)  ^{k};\\\left(
\beta_{g}\right)  ^{\operatorname*{red}}=\alpha_{g}\text{ for each }%
g}}\mathbf{x}^{\beta_{1}}\mathbf{x}^{\beta_{2}}\cdots\mathbf{x}^{\beta_{k}%
}\ \ \ \ \ \ \ \ \ \ \left(  \text{by the product rule}\right) \nonumber\\
&  =\sum_{\substack{B\in\mathbb{N}_{\operatorname*{fin}}^{k,\infty};\\\left(
B_{g,\bullet}\right)  ^{\operatorname*{red}}=\alpha_{g}\text{ for each }%
g}}\underbrace{\mathbf{x}^{B_{1,\bullet}}\mathbf{x}^{B_{2,\bullet}}%
\cdots\mathbf{x}^{B_{k,\bullet}}}_{\substack{=\mathbf{x}%
^{\operatorname*{column}B}\\\text{(since }\operatorname*{column}B\text{ is the
sum of the}\\\text{rows of }B\text{ (as vectors))}}}\nonumber\\
&  \ \ \ \ \ \ \ \ \ \ \ \ \ \ \ \ \ \ \ \ \left(
\begin{array}
[c]{c}%
\text{here, we have substituted }\left(  B_{1,\bullet},B_{2,\bullet}%
,\ldots,B_{k,\bullet}\right)  \text{ for}\\
\left(  \beta_{1},\beta_{2},\ldots,\beta_{k}\right)  \text{ in the sum, since
the map}\\
\mathbb{N}_{\operatorname*{fin}}^{k,\infty}\rightarrow\left(  \mathbb{N}%
_{\operatorname*{fin}}^{\infty}\right)  ^{k},\ B\mapsto\left(  B_{1,\bullet
},B_{2,\bullet},\ldots,B_{k,\bullet}\right)  \text{is a bijection}%
\end{array}
\right) \nonumber\\
&  =\sum_{\substack{B\in\mathbb{N}_{\operatorname*{fin}}^{k,\infty};\\\left(
B_{g,\bullet}\right)  ^{\operatorname*{red}}=\alpha_{g}\text{ for each }%
g}}\mathbf{x}^{\operatorname*{column}B}. \label{pf.prop.QSym.M*M*M.2}%
\end{align}

Now, let us introduce one more notation: For every matrix $B\in\mathbb{N}%
_{\operatorname*{fin}}^{k,\infty}$, let $B^{\operatorname*{Cred}}$ be the
matrix obtained from $B$ by removing all zero columns (i.e., all columns
containing only zeroes)\footnote{Again, we can define $B^{\operatorname*{Cred}%
}$ more rigorously as follows: Let $\mathcal{J}$ be the set of all positive
integers $j$ such that the $j$-th column of $B$ is nonzero. Let $\left(
j_{1}<j_{2}<\cdots<j_{h}\right)  $ be the list of all elements of
$\mathcal{J}$, in increasing order. Then, $B^{\operatorname*{Cred}}$ is
defined to be the $k\times h$-matrix whose columns (from left to right) are
the $j_{1}$-th column of $B$, the $j_{2}$-nd column of $B$, $\ldots$, the
$j_{h}$-th column of $B$.}. It is easy to see that $B^{\operatorname*{Cred}%
}\in\mathbb{N}_{\operatorname*{Cred}}^{k,\bullet}$ for every $B\in
\mathbb{N}_{\operatorname*{fin}}^{k,\infty}$. Moreover, every $B\in
\mathbb{N}_{\operatorname*{fin}}^{k,\infty}$ satisfies the following fact: If
$A=B^{\operatorname*{Cred}}$, then
\begin{equation}
\left(  B_{g,\bullet}\right)  ^{\operatorname*{red}}=\left(  A_{g,\bullet
}\right)  ^{\operatorname*{red}}\text{ for each }g
\label{pf.prop.QSym.M*M*M.Cred-props}%
\end{equation}
(indeed, $A_{g,\bullet}$ is obtained from $B_{g,\bullet}$ by removing some
zero entries).

Now, (\ref{pf.prop.QSym.M*M*M.2}) becomes%
\begin{align}
M_{\alpha_{1}}M_{\alpha_{2}}\cdots M_{\alpha_{k}}  &  =\sum_{\substack{B\in
\mathbb{N}_{\operatorname*{fin}}^{k,\infty};\\\left(  B_{g,\bullet}\right)
^{\operatorname*{red}}=\alpha_{g}\text{ for each }g}}\mathbf{x}%
^{\operatorname*{column}B}\nonumber\\
&  =\sum_{A\in\mathbb{N}_{\operatorname*{Cred}}^{k,\bullet}}\underbrace{\sum
_{\substack{B\in\mathbb{N}_{\operatorname*{fin}}^{k,\infty};\\\left(
B_{g,\bullet}\right)  ^{\operatorname*{red}}=\alpha_{g}\text{ for each
}g;\\B^{\operatorname*{Cred}}=A}}}_{\substack{=\sum_{\substack{B\in
\mathbb{N}_{\operatorname*{fin}}^{k,\infty};\\B^{\operatorname*{Cred}%
}=A;\\\left(  B_{g,\bullet}\right)  ^{\operatorname*{red}}=\alpha_{g}\text{
for each }g}}=\sum_{\substack{B\in\mathbb{N}_{\operatorname*{fin}}^{k,\infty
};\\B^{\operatorname*{Cred}}=A;\\\left(  A_{g,\bullet}\right)
^{\operatorname*{red}}=\alpha_{g}\text{ for each }g}}\\\text{(because if
}B^{\operatorname*{Cred}}=A\text{, then }\left(  B_{g,\bullet}\right)
^{\operatorname*{red}}=\left(  A_{g,\bullet}\right)  ^{\operatorname*{red}%
}\\\text{for each }g\text{ (because of (\ref{pf.prop.QSym.M*M*M.Cred-props}%
)))}}}\mathbf{x}^{\operatorname*{column}B}\nonumber\\
&  \ \ \ \ \ \ \ \ \ \ \ \ \ \ \ \ \ \ \ \ \left(  \text{since }%
B^{\operatorname*{Cred}}\in\mathbb{N}_{\operatorname*{Cred}}^{k,\bullet}\text{
for each }B\in\mathbb{N}_{\operatorname*{fin}}^{k,\infty}\right) \nonumber\\
&  =\underbrace{\sum_{A\in\mathbb{N}_{\operatorname*{Cred}}^{k,\bullet}}%
\sum_{\substack{B\in\mathbb{N}_{\operatorname*{fin}}^{k,\infty}%
;\\B^{\operatorname*{Cred}}=A;\\\left(  A_{g,\bullet}\right)
^{\operatorname*{red}}=\alpha_{g}\text{ for each }g}}}_{=\sum_{\substack{A\in
\mathbb{N}_{\operatorname*{Cred}}^{k,\bullet};\\\left(  A_{g,\bullet}\right)
^{\operatorname*{red}}=\alpha_{g}\text{ for each }g}}\ \ \sum_{\substack{B\in
\mathbb{N}_{\operatorname*{fin}}^{k,\infty};\\B^{\operatorname*{Cred}}=A}%
}}\mathbf{x}^{\operatorname*{column}B}\nonumber\\
&  =\sum_{\substack{A\in\mathbb{N}_{\operatorname*{Cred}}^{k,\bullet
};\\\left(  A_{g,\bullet}\right)  ^{\operatorname*{red}}=\alpha_{g}\text{ for
each }g}}\ \ \sum_{\substack{B\in\mathbb{N}_{\operatorname*{fin}}^{k,\infty
};\\B^{\operatorname*{Cred}}=A}}\mathbf{x}^{\operatorname*{column}B}.
\label{pf.prop.QSym.M*M*M.5}%
\end{align}

However, for every matrix $A\in\mathbb{N}_{\operatorname*{Cred}}^{k,\bullet}$,
we have%
\begin{equation}
\sum_{\substack{B\in\mathbb{N}_{\operatorname*{fin}}^{k,\infty}%
;\\B^{\operatorname*{Cred}}=A}}\mathbf{x}^{\operatorname*{column}%
B}=M_{\operatorname*{column}A}. \label{pf.prop.QSym.M*M*M.3}%
\end{equation}

\textit{Proof of (\ref{pf.prop.QSym.M*M*M.3}):} Let $A\in\mathbb{N}%
_{\operatorname*{Cred}}^{k,\bullet}$. We need to prove
(\ref{pf.prop.QSym.M*M*M.3}).

For every $B\in\mathbb{N}_{\operatorname*{fin}}^{k,\infty}$, we have $\left(
\operatorname*{column}B\right)  ^{\operatorname*{red}}=\operatorname*{column}%
\left(  B^{\operatorname*{Cred}}\right)  $ (because first taking the sum of
each column of $B$ and then removing the zeroes among these sums results in
the same list as first removing the zero columns of $B$ and then taking the
sum of each remaining column). Thus, for every $B\in\mathbb{N}%
_{\operatorname*{fin}}^{k,\infty}$ satisfying $B^{\operatorname*{Cred}}=A$, we
have $\operatorname*{column}B\in\mathbb{N}_{\operatorname*{fin}}^{\infty}$ and
$\left(  \operatorname*{column}B\right)  ^{\operatorname*{red}}%
=\operatorname*{column}\underbrace{\left(  B^{\operatorname*{Cred}}\right)
}_{=A}=\operatorname*{column}A$. Hence, the map%
\begin{align}
\left\{  B\in\mathbb{N}_{\operatorname*{fin}}^{k,\infty}\ \mid
\ B^{\operatorname*{Cred}}=A\right\}   &  \rightarrow\left\{  \beta
\in\mathbb{N}_{\operatorname*{fin}}^{\infty}\ \mid\ \beta^{\operatorname*{red}%
}=\operatorname*{column}A\right\}  ,\nonumber\\
B  &  \mapsto\operatorname*{column}B \label{pf.prop.QSym.M*M*M.3.pf.1}%
\end{align}
is well-defined.

On the other hand, if $\beta\in\mathbb{N}_{\operatorname*{fin}}^{\infty}$
satisfies $\beta^{\operatorname*{red}}=\operatorname*{column}A$, then there
exists a unique $B\in\mathbb{N}_{\operatorname*{fin}}^{k,\infty}$ satisfying
$B^{\operatorname*{Cred}}=A$ and $\operatorname*{column}B=\beta$%
\ \ \ \ \footnote{Namely, this $B$ can be computed as follows: Write the
sequence $\beta$ in the form $\beta=\left(  \beta_{1},\beta_{2},\beta
_{3},\ldots\right)  $. Let $\left(  i_{1}<i_{2}<\cdots<i_{h}\right)  $ be the
list of all $c$ satisfying $\beta_{c}\neq0$, written in increasing order.
Then, $B$ shall be the matrix whose $i_{1}$-st, $i_{2}$-nd, $\ldots$, $i_{h}%
$-th columns are the columns of $A$ (from left to right), whereas all its
other columns are $0$.
\par
Let us briefly sketch a proof of the fact that this $B$ is indeed an element
of $\mathbb{N}_{\operatorname*{fin}}^{k,\infty}$ satisfying
$B^{\operatorname*{Cred}}=A$ and $\operatorname*{column}B=\beta$:
\par
Indeed, it is clear that $B\in\mathbb{N}_{\operatorname*{fin}}^{k,\infty}$.
\par
We shall now show that%
\begin{equation}
\left(  \text{the }j\text{-th entry of }\operatorname*{column}B\right)
=\beta_{j} \label{pf.prop.QSym.M*M*M.3.pf.1.bij.fn1.1}%
\end{equation}
for every $j\in\left\{  1,2,3,\ldots\right\}  $.
\par
\textit{Proof of (\ref{pf.prop.QSym.M*M*M.3.pf.1.bij.fn1.1}):} Let
$j\in\left\{  1,2,3,\ldots\right\}  $. We must prove
(\ref{pf.prop.QSym.M*M*M.3.pf.1.bij.fn1.1}). We are in one of the following
two cases:
\par
\textit{Case 1:} We have $j\in\left\{  i_{1},i_{2},\ldots,i_{h}\right\}  $.
\par
\textit{Case 2:} We have $j\notin\left\{  i_{1},i_{2},\ldots,i_{h}\right\}  $.
\par
Let us first consider Case 1. In this case, we have $j\in\left\{  i_{1}%
,i_{2},\ldots,i_{h}\right\}  $. Hence, there exists a $g\in\left\{
1,2,\ldots,h\right\}  $ such that $j=i_{g}$. Consider this $g$. Now,
\begin{align*}
&  \left(  \text{the }j\text{-th entry of }\operatorname*{column}B\right) \\
&  =\left(  \text{the sum of the entries of the }\underbrace{j}_{=i_{g}%
}\text{-th column of }B\right) \\
&  =\left(  \text{the sum of the entries of }\underbrace{\text{the }%
i_{g}\text{-th column of }B}_{\substack{=\left(  \text{the }g\text{-th column
of }A\right)  \\\text{(by the definition of }B\text{)}}}\right) \\
&  =\left(  \text{the sum of the entries of the }g\text{-th column of
}A\right) \\
&  =\left(  \text{the }g\text{-th entry of }\underbrace{\operatorname*{column}%
A}_{\substack{=\beta^{\operatorname*{red}}=\left(  \beta_{i_{1}},\beta_{i_{2}%
},\ldots,\beta_{i_{h}}\right)  \\\text{(by the definition of }\beta
^{\operatorname*{red}}\text{)}}}\right) \\
&  =\left(  \text{the }g\text{-th entry of }\left(  \beta_{i_{1}},\beta
_{i_{2}},\ldots,\beta_{i_{h}}\right)  \right)  =\beta_{i_{g}}=\beta
_{j}\ \ \ \ \ \ \ \ \ \ \left(  \text{since }i_{g}=j\right)  .
\end{align*}
Thus, (\ref{pf.prop.QSym.M*M*M.3.pf.1.bij.fn1.1}) is proven in Case 1.
\par
Let us now consider Case 2. In this case, we have $j\notin\left\{  i_{1}%
,i_{2},\ldots,i_{h}\right\}  $. Hence, $j$ does not belong to the list
$\left(  i_{1}<i_{2}<\cdots<i_{h}\right)  $. In other words, $j$ does not
belong to the list of all $c\in\left\{  1,2,3,\ldots\right\}  $ satisfying
$\beta_{c}\neq0$ (since this list is $\left(  i_{1}<i_{2}<\cdots<i_{h}\right)
$). Hence, $\beta_{j}=0$.
\par
Recall that $j\notin\left\{  i_{1},i_{2},\ldots,i_{h}\right\}  $. Hence, the
$j$-th column of $B$ is the $0$ vector (by the definition of $B$). Now,
\begin{align*}
&  \left(  \text{the }j\text{-th entry of }\operatorname*{column}B\right) \\
&  =\left(  \text{the sum of the entries of }\underbrace{\text{the }j\text{-th
column of }B}_{=\left(  \text{the }0\text{ vector}\right)  }\right) \\
&  =\left(  \text{the sum of the entries of the }0\text{ vector}\right) \\
&  =0=\beta_{j}.
\end{align*}
Thus, (\ref{pf.prop.QSym.M*M*M.3.pf.1.bij.fn1.1}) is proven in Case 2.
\par
Hence, (\ref{pf.prop.QSym.M*M*M.3.pf.1.bij.fn1.1}) is proven in both Cases 1
and 2. Thus, the proof of (\ref{pf.prop.QSym.M*M*M.3.pf.1.bij.fn1.1}) is
complete.
\par
Now, from (\ref{pf.prop.QSym.M*M*M.3.pf.1.bij.fn1.1}), we immediately obtain
$\operatorname*{column}B=\left(  \beta_{1},\beta_{2},\beta_{3},\ldots\right)
=\beta$.
\par
It remains to prove that $B^{\operatorname*{Cred}}=A$. This can be done as
follows: We have $A\in\mathbb{N}_{\operatorname*{Cred}}^{k,\bullet}$; thus,
the matrix $A$ is column-reduced. Hence, no column of $A$ is the zero vector.
Therefore, none of the $i_{1}$-st, $i_{2}$-nd, $\ldots$, $i_{h}$-th columns of
$B$ is the zero vector (since these columns are the columns of $A$). On the
other hand, each of the remaining columns of $B$ is the zero vector (due to
the definition of $B$). Thus, the set of all positive integers $j$ such that
the $j$-th column of $B$ is nonzero is precisely $\left\{  i_{1},i_{2}%
,\ldots,i_{h}\right\}  $. The list of all elements of this set, in increasing
order, is $\left(  i_{1}<i_{2}<\cdots<i_{h}\right)  $. Hence, the definition
of $B^{\operatorname*{Cred}}$ shows that $B^{\operatorname*{Cred}}$ is the
$k\times h$-matrix whose columns (from left to right) are the $i_{1}$-th
column of $B$, the $i_{2}$-nd column of $B$, $\ldots$, the $i_{h}$-th column
of $B$. Since these columns are precisely the columns of $A$, this entails
that $B^{\operatorname*{Cred}}$ is the matrix $A$. In other words,
$B^{\operatorname*{Cred}}=A$.
\par
Thus, we have proven that $B$ is an element of $\mathbb{N}%
_{\operatorname*{fin}}^{k,\infty}$ satisfying $B^{\operatorname*{Cred}}=A$ and
$\operatorname*{column}B=\beta$. It is fairly easy to see that it is the only
such element (because the condition $\operatorname*{column}B=\beta$ determines
which columns of $B$ are nonzero, whereas the condition
$B^{\operatorname*{Cred}}=A$ determines the precise values of these
columns).}. In other words, the map (\ref{pf.prop.QSym.M*M*M.3.pf.1}) is
bijective. Thus, we can substitute $\beta$ for $\operatorname*{column}B$ in
the sum $\sum_{\substack{B\in\mathbb{N}_{\operatorname*{fin}}^{k,\infty
};\\B^{\operatorname*{Cred}}=A}}\mathbf{x}^{\operatorname*{column}B}$, and
obtain%
\[
\sum_{\substack{B\in\mathbb{N}_{\operatorname*{fin}}^{k,\infty}%
;\\B^{\operatorname*{Cred}}=A}}\mathbf{x}^{\operatorname*{column}B}%
=\sum_{\substack{\beta\in\mathbb{N}_{\operatorname*{fin}}^{\infty}%
;\\\beta^{\operatorname*{red}}=\operatorname*{column}A}}\mathbf{x}^{\beta
}=M_{\operatorname*{column}A}%
\]
(by (\ref{pf.prop.QSym.M*M*M.1}), applied to $\alpha=\operatorname*{column}%
A$). This proves (\ref{pf.prop.QSym.M*M*M.3}).

Now, (\ref{pf.prop.QSym.M*M*M.5}) becomes%
\begin{align*}
M_{\alpha_{1}}M_{\alpha_{2}}\cdots M_{\alpha_{k}}  &  =\sum_{\substack{A\in
\mathbb{N}_{\operatorname*{Cred}}^{k,\bullet};\\\left(  A_{g,\bullet}\right)
^{\operatorname*{red}}=\alpha_{g}\text{ for each }g}}\ \ \underbrace{\sum
_{\substack{B\in\mathbb{N}_{\operatorname*{fin}}^{k,\infty}%
;\\B^{\operatorname*{Cred}}=A}}\mathbf{x}^{\operatorname*{column}B}%
}_{\substack{=M_{\operatorname*{column}A}\\\text{(by
(\ref{pf.prop.QSym.M*M*M.3}))}}}\\
&  =\sum_{\substack{A\in\mathbb{N}_{\operatorname*{Cred}}^{k,\bullet
};\\\left(  A_{g,\bullet}\right)  ^{\operatorname*{red}}=\alpha_{g}\text{ for
each }g}}M_{\operatorname*{column}A}.
\end{align*}
This proves Proposition \ref{prop.QSym.M*M*M}.
\end{proof}

We need one more piece of notation:

\begin{definition}
\label{def.Comp.monoid}We define a (multiplicative) monoid structure on the
set $\operatorname*{Comp}$ as follows: If $\alpha=\left(  a_{1},a_{2}%
,\ldots,a_{n}\right)  $ and $\beta=\left(  b_{1},b_{2},\ldots,b_{m}\right)  $
are two compositions, then we set $\alpha\beta=\left(  a_{1},a_{2}%
,\ldots,a_{n},b_{1},b_{2},\ldots,b_{m}\right)  $. Thus, $\operatorname*{Comp}$
becomes a monoid with neutral element $\varnothing=\left(  {}\right)  $ (the
empty composition). (This monoid is actually the free monoid on the set
$\left\{  1,2,3,\ldots\right\}  $.)
\end{definition}

\begin{proposition}
\label{prop.DeltaMalpha}Let $\gamma\in\operatorname*{Comp}$ and $k\in
\mathbb{N}$. Then,%
\[
\Delta^{\left(  k-1\right)  }M_{\gamma}=\sum_{\substack{\left(  \gamma
_{1},\gamma_{2},\ldots,\gamma_{k}\right)  \in\operatorname*{Comp}%
\nolimits^{k};\\\gamma_{1}\gamma_{2}\cdots\gamma_{k}=\gamma}}M_{\gamma_{1}%
}\otimes M_{\gamma_{2}}\otimes\cdots\otimes M_{\gamma_{k}}.
\]

\end{proposition}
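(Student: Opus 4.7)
The proof is a straightforward induction on $k$. The plan is to verify the base cases $k=0$ and $k=1$ by inspection, and then use the recursion $\Delta^{(k-1)} = (\operatorname{id} \otimes \Delta^{(k-2)}) \circ \Delta$ (for $k \geq 2$) together with the formula (\ref{eq.DeltaM}) in order to pass from $k-1$ to $k$.

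\textbf{Base cases.} For $k = 0$, we have $\Delta^{(-1)} = \varepsilon$, so $\Delta^{(-1)} M_\gamma = \varepsilon(M_\gamma)$, which equals $1$ if $\gamma = \varnothing$ and $0$ otherwise. The right-hand side sums over $0$-tuples $(\gamma_1, \ldots, \gamma_k) \in \operatorname{Comp}^0$ whose (empty) product equals $\gamma$; there is exactly one such $0$-tuple (namely the empty one) when $\gamma = \varnothing$, and none otherwise, and the corresponding empty tensor product equals $1 \in \mathbf{k}^{\otimes 0} \cong \mathbf{k}$. So the equality holds. For $k = 1$, both sides equal $M_\gamma$.

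\textbf{Inductive step.} Let $k \geq 2$ and assume the claim holds for $k-1$ (and every $\gamma$). Using the recursive definition of $\Delta^{(k-1)}$, and then (\ref{eq.DeltaM}) applied to $M_\gamma$, we compute
\begin{align*}
\Delta^{(k-1)} M_\gamma
&= \bigl(\operatorname{id} \otimes \Delta^{(k-2)}\bigr)\bigl(\Delta M_\gamma\bigr) \\
&= \sum_{\substack{(\gamma_1, \delta) \in \operatorname{Comp}^2 \\ \gamma_1 \delta = \gamma}} M_{\gamma_1} \otimes \Delta^{(k-2)} M_{\delta},
\end{align*}
where we have rewritten the sum in (\ref{eq.DeltaM}) in terms of pairs $(\gamma_1, \delta)$ of compositions with $\gamma_1 \delta = \gamma$ (the two formulations agree, since splitting $\gamma = (b_1, \ldots, b_\ell)$ at index $i$ is the same as writing $\gamma$ as a concatenation of two compositions). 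Applying the inductive hypothesis to $\Delta^{(k-2)} M_\delta$ expands each inner term as a sum over $(k-1)$-tuples $(\gamma_2, \ldots, \gamma_k) \in \operatorname{Comp}^{k-1}$ with $\gamma_2 \gamma_3 \cdots \gamma_k = \delta$. Combining the outer and inner sums and using associativity of the monoid operation on $\operatorname{Comp}$, we obtain the desired sum over $k$-tuples $(\gamma_1, \gamma_2, \ldots, \gamma_k)$ with $\gamma_1 \gamma_2 \cdots \gamma_k = \gamma$.

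The only place requiring any care is the reindexing: one must check that the map sending a $k$-tuple $(\gamma_1, \gamma_2, \ldots, \gamma_k)$ with $\gamma_1 \cdots \gamma_k = \gamma$ to the pair $\bigl(\gamma_1, (\gamma_2, \ldots, \gamma_k)\bigr)$ (with $\gamma_2 \cdots \gamma_k$ being its concatenation) is a bijection onto the set of pairs $\bigl(\gamma_1, (\gamma_2, \ldots, \gamma_k)\bigr)$ where $\gamma_1 \in \operatorname{Comp}$, $(\gamma_2, \ldots, \gamma_k) \in \operatorname{Comp}^{k-1}$, and $\gamma_1 \cdot (\gamma_2 \cdots \gamma_k) = \gamma$. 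This is immediate from the definition of the monoid structure on $\operatorname{Comp}$. Hence the induction goes through, and the proposition is proved.
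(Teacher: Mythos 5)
Your proof is correct and follows exactly the route the paper takes (the paper only sketches it): rewrite (\ref{eq.DeltaM}) as a sum over pairs of compositions concatenating to $\gamma$, then induct on $k$ via the recursion defining $\Delta^{(k-1)}$. The base cases and the reindexing in the inductive step are handled correctly.
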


\begin{proof}
[Proof of Proposition \ref{prop.DeltaMalpha} (sketched).]We can rewrite
(\ref{eq.DeltaM}) as follows:%
\begin{equation}
\Delta M_{\beta}=\sum_{\substack{\left(  \sigma,\tau\right)  \in
\operatorname*{Comp}\times\operatorname*{Comp};\\\sigma\tau=\beta}}M_{\sigma
}\otimes M_{\tau}\ \ \ \ \ \ \ \ \ \ \text{for every }\beta\in
\operatorname*{Comp}. \label{pf.prop.DeltaMalpha.eq.DeltaM}%
\end{equation}
Proposition \ref{prop.DeltaMalpha} can easily be proven by induction using
(\ref{pf.prop.DeltaMalpha.eq.DeltaM}).
\end{proof}

\begin{proof}
[Proof of Proposition \ref{prop.QSym.secondcomult.alt}.]Fix $\alpha
\in\operatorname*{Comp}$ and $\gamma\in\operatorname*{Comp}$. Write $\alpha$
in the form $\alpha=\left(  a_{1},a_{2},\ldots,a_{k}\right)  $; thus, a
$\mathbf{k}$-linear map $\xi_{\alpha}:\operatorname*{QSym}%
\nolimits_{\mathbf{k}}\rightarrow\operatorname*{QSym}\nolimits_{\mathbf{k}}$
is defined (as in Definition \ref{def.bernstein}, applied to
$H=\operatorname*{QSym}\nolimits_{\mathbf{k}}$).

We shall prove that
\begin{equation}
\xi_{\alpha}\left(  M_{\gamma}\right)  =\sum_{\substack{A\in\mathbb{N}%
_{\operatorname*{red}}^{\bullet,\bullet}\text{;}\\\left(  \operatorname*{read}%
A\right)  ^{\operatorname*{red}}=\gamma\text{;}\\\operatorname*{row}A=\alpha
}}M_{\operatorname*{column}A}. \label{pf.prop.QSym.secondcomult.alt.4}%
\end{equation}

\textit{Proof of (\ref{pf.prop.QSym.secondcomult.alt.4}):} The definition of
$\xi_{\alpha}$ yields $\xi_{\alpha}=m^{\left(  k-1\right)  }\circ\pi_{\alpha
}\circ\Delta^{\left(  k-1\right)  }$. Thus,%
\begin{align*}
\xi_{\alpha}\left(  M_{\gamma}\right)   &  =\left(  m^{\left(  k-1\right)
}\circ\pi_{\alpha}\circ\Delta^{\left(  k-1\right)  }\right)  \left(
M_{\gamma}\right) \\
&  =\left(  m^{\left(  k-1\right)  }\circ\pi_{\alpha}\right)
\underbrace{\left(  \Delta^{\left(  k-1\right)  }\left(  M_{\gamma}\right)
\right)  }_{\substack{=\sum_{\substack{\left(  \gamma_{1},\gamma_{2}%
,\ldots,\gamma_{k}\right)  \in\operatorname*{Comp}\nolimits^{k};\\\gamma
_{1}\gamma_{2}\cdots\gamma_{k}=\gamma}}M_{\gamma_{1}}\otimes M_{\gamma_{2}%
}\otimes\cdots\otimes M_{\gamma_{k}}\\\text{(by Proposition
\ref{prop.DeltaMalpha})}}}\\
&  =\left(  m^{\left(  k-1\right)  }\circ\pi_{\alpha}\right)  \left(
\sum_{\substack{\left(  \gamma_{1},\gamma_{2},\ldots,\gamma_{k}\right)
\in\operatorname*{Comp}\nolimits^{k};\\\gamma_{1}\gamma_{2}\cdots\gamma
_{k}=\gamma}}M_{\gamma_{1}}\otimes M_{\gamma_{2}}\otimes\cdots\otimes
M_{\gamma_{k}}\right) \\
&  =\sum_{\substack{\left(  \gamma_{1},\gamma_{2},\ldots,\gamma_{k}\right)
\in\operatorname*{Comp}\nolimits^{k};\\\gamma_{1}\gamma_{2}\cdots\gamma
_{k}=\gamma}}m^{\left(  k-1\right)  }\underbrace{\left(  \pi_{\alpha}\left(
M_{\gamma_{1}}\otimes M_{\gamma_{2}}\otimes\cdots\otimes M_{\gamma_{k}%
}\right)  \right)  }_{\substack{=\pi_{a_{1}}\left(  M_{\gamma_{1}}\right)
\otimes\pi_{a_{2}}\left(  M_{\gamma_{2}}\right)  \otimes\cdots\otimes
\pi_{a_{k}}\left(  M_{\gamma_{k}}\right)  \\\text{(since }\pi_{\alpha}%
=\pi_{a_{1}}\otimes\pi_{a_{2}}\otimes\cdots\otimes\pi_{a_{k}}\text{)}}}\\
&  =\sum_{\substack{\left(  \gamma_{1},\gamma_{2},\ldots,\gamma_{k}\right)
\in\operatorname*{Comp}\nolimits^{k};\\\gamma_{1}\gamma_{2}\cdots\gamma
_{k}=\gamma}}\underbrace{m^{\left(  k-1\right)  }\left(  \pi_{a_{1}}\left(
M_{\gamma_{1}}\right)  \otimes\pi_{a_{2}}\left(  M_{\gamma_{2}}\right)
\otimes\cdots\otimes\pi_{a_{k}}\left(  M_{\gamma_{k}}\right)  \right)  }%
_{=\pi_{a_{1}}\left(  M_{\gamma_{1}}\right)  \cdot\pi_{a_{2}}\left(
M_{\gamma_{2}}\right)  \cdot\cdots\cdot\pi_{a_{k}}\left(  M_{\gamma_{k}%
}\right)  =\prod_{g=1}^{k}\pi_{a_{g}}\left(  M_{\gamma_{g}}\right)  }\\
&  =\sum_{\substack{\left(  \gamma_{1},\gamma_{2},\ldots,\gamma_{k}\right)
\in\operatorname*{Comp}\nolimits^{k};\\\gamma_{1}\gamma_{2}\cdots\gamma
_{k}=\gamma}}\ \ \prod_{g=1}^{k}\underbrace{\pi_{a_{g}}\left(  M_{\gamma_{g}%
}\right)  }_{\substack{=%
\begin{cases}
M_{\gamma_{g}}, & \text{if }\left\vert \gamma_{g}\right\vert =a_{g};\\
0, & \text{if }\left\vert \gamma_{g}\right\vert \neq a_{g}%
\end{cases}
\\\text{(since the power series }M_{\gamma_{g}}\text{ is}\\\text{homogeneous
of degree }\left\vert \gamma_{g}\right\vert \text{)}}}
\end{align*}%
\begin{align*}
&  =\sum_{\substack{\left(  \gamma_{1},\gamma_{2},\ldots,\gamma_{k}\right)
\in\operatorname*{Comp}\nolimits^{k};\\\gamma_{1}\gamma_{2}\cdots\gamma
_{k}=\gamma}}\underbrace{\prod_{g=1}^{k}%
\begin{cases}
M_{\gamma_{g}}, & \text{if }\left\vert \gamma_{g}\right\vert =a_{g};\\
0, & \text{if }\left\vert \gamma_{g}\right\vert \neq a_{g}%
\end{cases}
}_{=%
\begin{cases}
\prod_{g=1}^{k}M_{\gamma_{g}}, & \text{if }\left\vert \gamma_{g}\right\vert
=a_{g}\text{ for all }g;\\
0, & \text{otherwise}%
\end{cases}
}\\
&  =\sum_{\substack{\left(  \gamma_{1},\gamma_{2},\ldots,\gamma_{k}\right)
\in\operatorname*{Comp}\nolimits^{k};\\\gamma_{1}\gamma_{2}\cdots\gamma
_{k}=\gamma}}%
\begin{cases}
\prod_{g=1}^{k}M_{\gamma_{g}}, & \text{if }\left\vert \gamma_{g}\right\vert
=a_{g}\text{ for all }g;\\
0, & \text{otherwise}%
\end{cases}
\\
&  =\sum_{\substack{\left(  \gamma_{1},\gamma_{2},\ldots,\gamma_{k}\right)
\in\operatorname*{Comp}\nolimits^{k};\\\gamma_{1}\gamma_{2}\cdots\gamma
_{k}=\gamma;\\\left\vert \gamma_{g}\right\vert =a_{g}\text{ for all }%
g}}\underbrace{\prod_{g=1}^{k}M_{\gamma_{g}}}_{\substack{=M_{\gamma_{1}%
}M_{\gamma_{2}}\cdots M_{\gamma_{k}}\\=\sum_{\substack{A\in\mathbb{N}%
_{\operatorname*{Cred}}^{k,\bullet};\\\left(  A_{g,\bullet}\right)
^{\operatorname*{red}}=\gamma_{g}\text{ for each }g}}M_{\operatorname*{column}%
A}\\\text{(by Proposition \ref{prop.QSym.M*M*M}, applied to }\alpha_{i}%
=\gamma_{i}\text{)}}}\\
&  \ \ \ \ \ \ \ \ \ \ \ \ \ \ \ \ \ \ \ \ \left(  \text{here, we have
filtered out the zero addends}\right)
\end{align*}%
\begin{align}
&  =\underbrace{\sum_{\substack{\left(  \gamma_{1},\gamma_{2},\ldots
,\gamma_{k}\right)  \in\operatorname*{Comp}\nolimits^{k};\\\gamma_{1}%
\gamma_{2}\cdots\gamma_{k}=\gamma;\\\left\vert \gamma_{g}\right\vert
=a_{g}\text{ for all }g}}\ \ \sum_{\substack{A\in\mathbb{N}%
_{\operatorname*{Cred}}^{k,\bullet};\\\left(  A_{g,\bullet}\right)
^{\operatorname*{red}}=\gamma_{g}\text{ for each }g}}}_{=\sum_{\left(
\gamma_{1},\gamma_{2},\ldots,\gamma_{k}\right)  \in\operatorname*{Comp}%
\nolimits^{k}}\ \ \sum_{\substack{A\in\mathbb{N}_{\operatorname*{Cred}%
}^{k,\bullet};\\\left(  A_{g,\bullet}\right)  ^{\operatorname*{red}}%
=\gamma_{g}\text{ for each }g;\\\gamma_{1}\gamma_{2}\cdots\gamma_{k}%
=\gamma;\\\left\vert \gamma_{g}\right\vert =a_{g}\text{ for all }g}%
}}M_{\operatorname*{column}A}\nonumber\\
&  =\sum_{\left(  \gamma_{1},\gamma_{2},\ldots,\gamma_{k}\right)
\in\operatorname*{Comp}\nolimits^{k}}\underbrace{\sum_{\substack{A\in
\mathbb{N}_{\operatorname*{Cred}}^{k,\bullet};\\\left(  A_{g,\bullet}\right)
^{\operatorname*{red}}=\gamma_{g}\text{ for each }g;\\\gamma_{1}\gamma
_{2}\cdots\gamma_{k}=\gamma;\\\left\vert \gamma_{g}\right\vert =a_{g}\text{
for all }g}}}_{\substack{=\sum_{\substack{A\in\mathbb{N}_{\operatorname*{Cred}%
}^{k,\bullet};\\\left(  A_{g,\bullet}\right)  ^{\operatorname*{red}}%
=\gamma_{g}\text{ for each }g;\\\left(  A_{1,\bullet}\right)
^{\operatorname*{red}}\left(  A_{2,\bullet}\right)  ^{\operatorname*{red}%
}\cdots\left(  A_{k,\bullet}\right)  ^{\operatorname*{red}}=\gamma
;\\\left\vert \left(  A_{g,\bullet}\right)  ^{\operatorname*{red}}\right\vert
=a_{g}\text{ for all }g}}\\\text{(here, we have replaced every }\gamma
_{g}\\\text{in the conditions }\left(  \gamma_{1}\gamma_{2}\cdots\gamma
_{k}=\gamma\right)  \\\text{and }\left(  \left\vert \gamma_{g}\right\vert
=a_{g}\text{ for all }g\right)  \text{ by the}\\\text{corresponding }\left(
A_{g,\bullet}\right)  ^{\operatorname*{red}}\text{, because}\\\text{of the
condition that }\left(  A_{g,\bullet}\right)  ^{\operatorname*{red}}%
=\gamma_{g}\text{)}}}M_{\operatorname*{column}A}\nonumber\\
&  =\underbrace{\sum_{\left(  \gamma_{1},\gamma_{2},\ldots,\gamma_{k}\right)
\in\operatorname*{Comp}\nolimits^{k}}\sum_{\substack{A\in\mathbb{N}%
_{\operatorname*{Cred}}^{k,\bullet};\\\left(  A_{g,\bullet}\right)
^{\operatorname*{red}}=\gamma_{g}\text{ for each }g;\\\left(  A_{1,\bullet
}\right)  ^{\operatorname*{red}}\left(  A_{2,\bullet}\right)
^{\operatorname*{red}}\cdots\left(  A_{k,\bullet}\right)
^{\operatorname*{red}}=\gamma;\\\left\vert \left(  A_{g,\bullet}\right)
^{\operatorname*{red}}\right\vert =a_{g}\text{ for all }g}}}_{=\sum
_{\substack{A\in\mathbb{N}_{\operatorname*{Cred}}^{k,\bullet};\\\left(
A_{1,\bullet}\right)  ^{\operatorname*{red}}\left(  A_{2,\bullet}\right)
^{\operatorname*{red}}\cdots\left(  A_{k,\bullet}\right)
^{\operatorname*{red}}=\gamma;\\\left\vert \left(  A_{g,\bullet}\right)
^{\operatorname*{red}}\right\vert =a_{g}\text{ for all }g}}}%
M_{\operatorname*{column}A}\nonumber\\
&  =\sum_{\substack{A\in\mathbb{N}_{\operatorname*{Cred}}^{k,\bullet
};\\\left(  A_{1,\bullet}\right)  ^{\operatorname*{red}}\left(  A_{2,\bullet
}\right)  ^{\operatorname*{red}}\cdots\left(  A_{k,\bullet}\right)
^{\operatorname*{red}}=\gamma;\\\left\vert \left(  A_{g,\bullet}\right)
^{\operatorname*{red}}\right\vert =a_{g}\text{ for all }g}%
}M_{\operatorname*{column}A}. \label{pf.prop.QSym.secondcomult.alt.8}%
\end{align}

Now, we observe that every $A\in\mathbb{N}_{\operatorname*{Cred}}^{k,\bullet}$
satisfies
\begin{equation}
\left(  A_{1,\bullet}\right)  ^{\operatorname*{red}}\left(  A_{2,\bullet
}\right)  ^{\operatorname*{red}}\cdots\left(  A_{k,\bullet}\right)
^{\operatorname*{red}}=\left(  \operatorname*{read}A\right)
^{\operatorname*{red}} \label{pf.prop.QSym.secondcomult.alt.helper1}%
\end{equation}
\footnote{\textit{Proof of (\ref{pf.prop.QSym.secondcomult.alt.helper1}):} Let
$A\in\mathbb{N}_{\operatorname*{Cred}}^{k,\bullet}$.
\par
Let $\mathbb{N}^{\bullet}$ be the set of all finite lists of nonnegative
integers. Then, $\operatorname*{Comp}\subseteq\mathbb{N}^{\bullet}$. In
Definition \ref{def.Comp.monoid}, we have defined a monoid structure on the
set $\operatorname*{Comp}$. We can extend this monoid structure to the set
$\mathbb{N}^{\bullet}$ (by the same rule: namely, if $\alpha=\left(
a_{1},a_{2},\ldots,a_{n}\right)  $ and $\beta=\left(  b_{1},b_{2},\ldots
,b_{m}\right)  $, then $\alpha\beta=\left(  a_{1},a_{2},\ldots,a_{n}%
,b_{1},b_{2},\ldots,b_{m}\right)  $). (Of course, this monoid $\mathbb{N}%
^{\bullet}$ is just the free monoid on the set $\mathbb{N}$.) Using the latter
structure, we can rewrite the definition of $\operatorname*{read}A$ as
follows:%
\[
\operatorname*{read}A=A_{1,\bullet}A_{2,\bullet}\cdots A_{k,\bullet}.
\]
\par
Clearly, the map $\mathbb{N}^{\bullet}\rightarrow\operatorname*{Comp}%
,\ \beta\mapsto\beta^{\operatorname*{red}}$ is a monoid homomorphism. Thus,%
\[
\left(  A_{1,\bullet}\right)  ^{\operatorname*{red}}\left(  A_{2,\bullet
}\right)  ^{\operatorname*{red}}\cdots\left(  A_{k,\bullet}\right)
^{\operatorname*{red}}=\left(  \underbrace{A_{1,\bullet}A_{2,\bullet}\cdots
A_{k,\bullet}}_{=\operatorname*{read}A}\right)  ^{\operatorname*{red}}=\left(
\operatorname*{read}A\right)  ^{\operatorname*{red}}.
\]
This proves (\ref{pf.prop.QSym.secondcomult.alt.helper1}).}.

Also, for every $A\in\mathbb{N}_{\operatorname*{Cred}}^{k,\bullet}$, we
have the logical equivalence%
\begin{equation}
\left(  \left\vert \left(  A_{g,\bullet}\right)  ^{\operatorname*{red}%
}\right\vert =a_{g}\text{ for all }g\right)  \ \Longleftrightarrow\ \left(
\operatorname*{row}A=\alpha\right)
\label{pf.prop.QSym.secondcomult.alt.helper2}%
\end{equation}
\footnote{\textit{Proof of (\ref{pf.prop.QSym.secondcomult.alt.helper2}):} Let
$A\in\mathbb{N}_{\operatorname*{Cred}}^{k,\bullet}$. Then, every $g\in\left\{
1,2,\ldots,k\right\}  $ satisfies
\begin{align*}
\left\vert \left(  A_{g,\bullet}\right)  ^{\operatorname*{red}}\right\vert  &
=\left(  \text{sum of all entries of }\left(  A_{g,\bullet}\right)
^{\operatorname*{red}}\right)  =\left(  \text{sum of all nonzero entries in
}A_{g,\bullet}\right) \\
&  \ \ \ \ \ \ \ \ \ \ \left(  \text{by the definition of }\left(
A_{g,\bullet}\right)  ^{\operatorname*{red}}\right) \\
&  =\left(  \text{sum of all nonzero entries in the }g\text{-th row of
}A\right) \\
&  =\left(  \text{sum of all entries in the }g\text{-th row of }A\right)
=\left(  \text{the }g\text{-th entry of }\operatorname*{row}A\right)  .
\end{align*}
Hence, we have the following chain of equivalences:%
\begin{align*}
&  \ \left(  \underbrace{\left\vert \left(  A_{g,\bullet}\right)
^{\operatorname*{red}}\right\vert }_{=\left(  \text{the }g\text{-th entry of
}\operatorname*{row}A\right)  }=a_{g}\text{ for all }g\right) \\
&  \Longleftrightarrow\ \left(  \left(  \text{the }g\text{-th entry of
}\operatorname*{row}A\right)  =a_{g}\text{ for all }g\right) \\
&  \Longleftrightarrow\ \left(  \operatorname*{row}A=\underbrace{\left(
a_{1},a_{2},\ldots,a_{k}\right)  }_{=\alpha}\right)  =\left(
\operatorname*{row}A=\alpha\right)  .
\end{align*}
This proves (\ref{pf.prop.QSym.secondcomult.alt.helper2}).}.

Also, every $A\in\mathbb{N}_{\operatorname*{Cred}}^{k,\bullet}$ satisfying
$\operatorname*{row}A=\alpha$ belongs to $\mathbb{N}_{\operatorname*{red}%
}^{\bullet,\bullet}$\ \ \ \ \footnote{\textit{Proof.} Let $A\in\mathbb{N}%
_{\operatorname*{Cred}}^{k,\bullet}$ be such that $\operatorname*{row}%
A=\alpha$. We must show that $A\in\mathbb{N}_{\operatorname*{red}}%
^{\bullet,\bullet}$. The sequence $\operatorname*{row}A=\alpha$ is a
composition; hence, $A$ is row-reduced. Since $A$ is also column-reduced
(because $A\in\mathbb{N}_{\operatorname*{Cred}}^{k,\bullet}$), this shows that
$A$ is reduced. Hence, $A\in\mathbb{N}_{\operatorname*{red}}^{\bullet,\bullet
}$, qed.}. Conversely, every $A\in\mathbb{N}_{\operatorname*{red}}%
^{\bullet,\bullet}$ satisfying $\operatorname*{row}A=\alpha$ belongs to
$\mathbb{N}_{\operatorname*{Cred}}^{k,\bullet}$%
\ \ \ \ \footnote{\textit{Proof.} Let $A\in\mathbb{N}_{\operatorname*{red}%
}^{\bullet,\bullet}$ be such that $\operatorname*{row}A=\alpha$. We must show
that $A\in\mathbb{N}_{\operatorname*{Cred}}^{k,\bullet}$. The number of rows
of $A$ is clearly the length of the vector $\operatorname*{row}A$ (where the
\textquotedblleft length\textquotedblright\ of a vector just means its number
of entries). But this length is $k$ (since $\operatorname*{row}A=\alpha
=\left(  a_{1},a_{2},\ldots,a_{k}\right)  $). Therefore, the number of rows of
$A$ is $k$. Also, $A$ is reduced (since $A\in\mathbb{N}_{\operatorname*{red}%
}^{\bullet,\bullet}$) and therefore column-reduced. Hence, $A\in
\mathbb{N}_{\operatorname*{Cred}}^{k,\bullet}$ (since $A$ is column-reduced
and the number of rows of $A$ is $k$), qed.}. Combining these two
observations, we see that
\begin{equation}
\left(
\begin{array}
[c]{c}%
\text{the matrices }A\in\mathbb{N}_{\operatorname*{Cred}}^{k,\bullet}\text{
satisfying }\operatorname*{row}A=\alpha\\
\text{are precisely the matrices }A\in\mathbb{N}_{\operatorname*{red}%
}^{\bullet,\bullet}\text{ satisfying }\operatorname*{row}A=\alpha
\end{array}
\right)  . \label{pf.prop.QSym.secondcomult.alt.helper5}%
\end{equation}

Now, (\ref{pf.prop.QSym.secondcomult.alt.8}) becomes%
\begin{align*}
\xi_{\alpha}\left(  M_{\gamma}\right)   &  =\underbrace{\sum_{\substack{A\in
\mathbb{N}_{\operatorname*{Cred}}^{k,\bullet};\\\left(  A_{1,\bullet}\right)
^{\operatorname*{red}}\left(  A_{2,\bullet}\right)  ^{\operatorname*{red}%
}\cdots\left(  A_{k,\bullet}\right)  ^{\operatorname*{red}}=\gamma
;\\\left\vert \left(  A_{g,\bullet}\right)  ^{\operatorname*{red}}\right\vert
=a_{g}\text{ for all }g}}}_{\substack{=\sum_{\substack{A\in\mathbb{N}%
_{\operatorname*{Cred}}^{k,\bullet}\text{;}\\\left(  \operatorname*{read}%
A\right)  ^{\operatorname*{red}}=\gamma\text{;}\\\operatorname*{row}A=\alpha
}}\\\text{(by (\ref{pf.prop.QSym.secondcomult.alt.helper1}) and
(\ref{pf.prop.QSym.secondcomult.alt.helper2}))}}}M_{\operatorname*{column}A}\\
&  =\underbrace{\sum_{\substack{A\in\mathbb{N}_{\operatorname*{Cred}%
}^{k,\bullet}\text{;}\\\left(  \operatorname*{read}A\right)
^{\operatorname*{red}}=\gamma\text{;}\\\operatorname*{row}A=\alpha}%
}}_{\substack{=\sum_{\substack{A\in\mathbb{N}_{\operatorname*{red}}%
^{\bullet,\bullet}\text{;}\\\left(  \operatorname*{read}A\right)
^{\operatorname*{red}}=\gamma\text{;}\\\operatorname*{row}A=\alpha
}}\\\text{(by (\ref{pf.prop.QSym.secondcomult.alt.helper5}))}}%
}M_{\operatorname*{column}A}=\sum_{\substack{A\in\mathbb{N}%
_{\operatorname*{red}}^{\bullet,\bullet}\text{;}\\\left(  \operatorname*{read}%
A\right)  ^{\operatorname*{red}}=\gamma\text{;}\\\operatorname*{row}A=\alpha
}}M_{\operatorname*{column}A}.
\end{align*}
This proves (\ref{pf.prop.QSym.secondcomult.alt.4}).

Now, forget that we fixed $\alpha$ and $\gamma$. For every $\gamma
\in\operatorname*{Comp}$, we have%
\begin{align}
\beta_{\operatorname*{QSym}\nolimits_{\mathbf{k}}}\left(  M_{\gamma}\right)
&  =\sum_{\alpha\in\operatorname*{Comp}}\underbrace{\xi_{\alpha}\left(
M_{\gamma}\right)  }_{\substack{=\sum_{\substack{A\in\mathbb{N}%
_{\operatorname*{red}}^{\bullet,\bullet}\text{;}\\\left(  \operatorname*{read}%
A\right)  ^{\operatorname*{red}}=\gamma\text{;}\\\operatorname*{row}A=\alpha
}}M_{\operatorname*{column}A}\\\text{(by
(\ref{pf.prop.QSym.secondcomult.alt.4}))}}}\otimes M_{\alpha}%
\ \ \ \ \ \ \ \ \ \ \left(  \text{by the definition of }\beta
_{\operatorname*{QSym}\nolimits_{\mathbf{k}}}\right) \nonumber\\
&  =\sum_{\alpha\in\operatorname*{Comp}}\ \ \sum_{\substack{A\in
\mathbb{N}_{\operatorname*{red}}^{\bullet,\bullet}\text{;}\\\left(
\operatorname*{read}A\right)  ^{\operatorname*{red}}=\gamma\text{;}%
\\\operatorname*{row}A=\alpha}}M_{\operatorname*{column}A}\otimes
\underbrace{M_{\alpha}}_{\substack{=M_{\operatorname*{row}A}\\\text{(since
}\operatorname*{row}A=\alpha\text{)}}}\nonumber\\
&  =\underbrace{\sum_{\alpha\in\operatorname*{Comp}}\ \ \sum_{\substack{A\in
\mathbb{N}_{\operatorname*{red}}^{\bullet,\bullet}\text{;}\\\left(
\operatorname*{read}A\right)  ^{\operatorname*{red}}=\gamma\text{;}%
\\\operatorname*{row}A=\alpha}}}_{=\sum_{\substack{A\in\mathbb{N}%
_{\operatorname*{red}}^{\bullet,\bullet}\text{;}\\\left(  \operatorname*{read}%
A\right)  ^{\operatorname*{red}}=\gamma\text{;}\\\operatorname*{row}%
A\in\operatorname*{Comp}}}}M_{\operatorname*{column}A}\otimes
M_{\operatorname*{row}A}\nonumber\\
&  =\sum_{\substack{A\in\mathbb{N}_{\operatorname*{red}}^{\bullet,\bullet
}\text{;}\\\left(  \operatorname*{read}A\right)  ^{\operatorname*{red}}%
=\gamma\text{;}\\\operatorname*{row}A\in\operatorname*{Comp}}%
}M_{\operatorname*{column}A}\otimes M_{\operatorname*{row}A}.
\label{pf.prop.QSym.secondcomult.alt.-2}%
\end{align}

But every $A\in\mathbb{N}_{\operatorname*{red}}^{\bullet,\bullet}$ satisfies
$\operatorname*{row}A\in\operatorname*{Comp}$\ \ \ \ \footnote{\textit{Proof.}
Let $A\in\mathbb{N}_{\operatorname*{red}}^{\bullet,\bullet}$. Then, the matrix
$A$ is reduced, and therefore row-reduced. In other words,
$\operatorname*{row}A$ is a composition. In other words, $\operatorname*{row}%
A\in\operatorname*{Comp}$, qed.}. Hence, the summation sign $\sum
_{\substack{A\in\mathbb{N}_{\operatorname*{red}}^{\bullet,\bullet}%
\text{;}\\\left(  \operatorname*{read}A\right)  ^{\operatorname*{red}}%
=\gamma\text{;}\\\operatorname*{row}A\in\operatorname*{Comp}}}$ on the right
hand side of (\ref{pf.prop.QSym.secondcomult.alt.-2}) can be replaced by
$\sum_{\substack{A\in\mathbb{N}_{\operatorname*{red}}^{\bullet,\bullet
}\text{;}\\\left(  \operatorname*{read}A\right)  ^{\operatorname*{red}}%
=\gamma}}$. Thus, (\ref{pf.prop.QSym.secondcomult.alt.-2}) becomes%
\begin{align}
\beta_{\operatorname*{QSym}\nolimits_{\mathbf{k}}}\left(  M_{\gamma}\right)
&  =\underbrace{\sum_{\substack{A\in\mathbb{N}_{\operatorname*{red}}%
^{\bullet,\bullet}\text{;}\\\left(  \operatorname*{read}A\right)
^{\operatorname*{red}}=\gamma\text{;}\\\operatorname*{row}A\in
\operatorname*{Comp}}}}_{=\sum_{\substack{A\in\mathbb{N}_{\operatorname*{red}%
}^{\bullet,\bullet}\text{;}\\\left(  \operatorname*{read}A\right)
^{\operatorname*{red}}=\gamma}}}M_{\operatorname*{column}A}\otimes
M_{\operatorname*{row}A}\nonumber\\
&  =\sum_{\substack{A\in\mathbb{N}_{\operatorname*{red}}^{\bullet,\bullet
}\text{;}\\\left(  \operatorname*{read}A\right)  ^{\operatorname*{red}}%
=\gamma}}M_{\operatorname*{column}A}\otimes M_{\operatorname*{row}A}.
\label{pf.prop.QSym.secondcomult.alt.-1}%
\end{align}

On the other hand, every $\gamma\in\operatorname*{Comp}$ satisfies%
\begin{align*}
\underbrace{\Delta_{P}^{\prime}}_{=\tau\circ\Delta_{P}}\left(  M_{\gamma
}\right)   &  =\left(  \tau\circ\Delta_{P}\right)  \left(  M_{\gamma}\right)
=\tau\left(  \underbrace{\Delta_{P}\left(  M_{\gamma}\right)  }%
_{\substack{=\sum_{\substack{A\in\mathbb{N}_{\operatorname*{red}}%
^{\bullet,\bullet};\\\left(  \operatorname*{read}A\right)
^{\operatorname*{red}}=\gamma}}M_{\operatorname*{row}A}\otimes
M_{\operatorname*{column}A}\\\text{(by the definition of }\Delta_{P}\text{)}%
}}\right) \\
&  =\tau\left(  \sum_{\substack{A\in\mathbb{N}_{\operatorname*{red}}%
^{\bullet,\bullet};\\\left(  \operatorname*{read}A\right)
^{\operatorname*{red}}=\gamma}}M_{\operatorname*{row}A}\otimes
M_{\operatorname*{column}A}\right) \\
&  =\sum_{\substack{A\in\mathbb{N}_{\operatorname*{red}}^{\bullet,\bullet
}\text{;}\\\left(  \operatorname*{read}A\right)  ^{\operatorname*{red}}%
=\gamma}}M_{\operatorname*{column}A}\otimes M_{\operatorname*{row}%
A}\ \ \ \ \ \ \ \ \ \ \left(  \text{by the definition of }\tau\right) \\
&  =\beta_{\operatorname*{QSym}\nolimits_{\mathbf{k}}}\left(  M_{\gamma
}\right)  \ \ \ \ \ \ \ \ \ \ \left(  \text{by
(\ref{pf.prop.QSym.secondcomult.alt.-1})}\right)  .
\end{align*}
Since both maps $\Delta_{P}^{\prime}$ and $\beta_{\operatorname*{QSym}%
\nolimits_{\mathbf{k}}}$ are $\mathbf{k}$-linear, this yields $\Delta
_{P}^{\prime}=\beta_{\operatorname*{QSym}\nolimits_{\mathbf{k}}}$ (since
$\left(  M_{\gamma}\right)  _{\gamma\in\operatorname*{Comp}}$ is a basis of
the $\mathbf{k}$-module $\operatorname*{QSym}\nolimits_{\mathbf{k}}$). This
proves Proposition \ref{prop.QSym.secondcomult.alt}.
\end{proof}

The next theorem is an analogue for $\operatorname*{QSym}$ of the Bernstein
homomorphism (\cite[\S 18.24]{HazeWitt}) for the symmetric functions:

\begin{theorem}
\label{thm.QSym.bernstein}Let $\mathbf{k}$ be a commutative ring. Let $H$ be a
commutative connected graded $\mathbf{k}$-Hopf algebra. For every composition
$\alpha$, define a $\mathbf{k}$-linear map $\xi_{\alpha}:H\rightarrow H$ as in
Definition \ref{def.bernstein}. Define a map $\beta_{H}:H\rightarrow
\underline{H}\otimes\operatorname*{QSym}\nolimits_{\mathbf{k}}$ as in
Definition \ref{def.bernstein}.

\textbf{(a)} The map $\beta_{H}$ is a $\mathbf{k}$-algebra homomorphism
$H\rightarrow\underline{H}\otimes\operatorname*{QSym}\nolimits_{\mathbf{k}}$
and a graded $\left(  \mathbf{k},\underline{H}\right)  $-coalgebra homomorphism.

\textbf{(b)} We have $\left(  \operatorname*{id}\otimes\varepsilon_{P}\right)
\circ\beta_{H}=\operatorname*{id}$, where we regard $\operatorname*{id}%
\otimes\varepsilon_{P}:\underline{H}\otimes\operatorname*{QSym}%
\nolimits_{\mathbf{k}}\rightarrow\underline{H}\otimes\mathbf{k}$ as a map from
$\underline{H}\otimes\operatorname*{QSym}\nolimits_{\mathbf{k}}$ to
$\underline{H}$ (by identifying $\underline{H}\otimes\mathbf{k}$ with
$\underline{H}$).

\textbf{(c)} Define a map $\Delta_{P}^{\prime}:\operatorname*{QSym}%
\nolimits_{\mathbf{k}}\rightarrow\operatorname*{QSym}\nolimits_{\mathbf{k}%
}\otimes\operatorname*{QSym}\nolimits_{\mathbf{k}}$ as in Definition
\ref{def.QSym.secondcomult} \textbf{(e)}. The diagram%
\[
\xymatrixcolsep{5pc}\xymatrix{
H \ar[r]^{\beta_H} \ar[d]_{\beta_H} & \underline{H} \otimes \QSym \ar[d]_{\beta_H\otimes\id} \\
\underline{H} \otimes \QSym \ar[r]_-{\id\otimes\Delta'_P} & \underline{H} \otimes \underline{\QSym} \otimes \QSym
}
\]
is commutative.

\textbf{(d)} If the $\mathbf{k}$-coalgebra $H$ is cocommutative, then
$\beta_{H}\left(  H\right)  $ is a subset of the subring $\underline{H}%
\otimes\Lambda_{\mathbf{k}}$ of $\underline{H}\otimes\operatorname*{QSym}%
\nolimits_{\mathbf{k}}$, where $\Lambda_{\mathbf{k}}$ is the $\mathbf{k}%
$-algebra of symmetric functions over $\mathbf{k}$.
\end{theorem}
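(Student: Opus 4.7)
The plan is to deduce all four parts of the theorem from Corollary \ref{cor.ABS.hopf.esc}, applied with two different choices of $\left(A,\xi\right)$. For parts \textbf{(a)}, \textbf{(b)}, \textbf{(d)}, take $A = \underline{H}$ and $\xi = \id_{H}:H\to\underline{H}$; this is a $\mathbf{k}$-algebra homomorphism because $H$ is commutative. The explicit formula in Corollary \ref{cor.ABS.hopf.esc}\textbf{(c)} identifies the resulting unique map $\Xi$ with $\beta_{H}$ from Definition \ref{def.bernstein}. Consequently: part \textbf{(a)} of the theorem follows from parts \textbf{(a)} and \textbf{(b)} of the corollary, part \textbf{(b)} is the commutativity of the diagram in part \textbf{(a)} of the corollary, and part \textbf{(d)} follows from part \textbf{(d)} of the corollary.

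Part \textbf{(c)} is the crux. The plan is to apply Corollary \ref{cor.ABS.hopf.esc} a second time, now with $A = H\otimes\QSym$ (commutative, since $H$ and $\QSym$ are) and $\xi = \beta_{H} : H \to \underline{A}$, which is a $\mathbf{k}$-algebra homomorphism by part \textbf{(a)} of the theorem. Let $\Xi : H \to \underline{A}\otimes\QSym$ be the resulting unique graded $\left(\mathbf{k},\underline{A}\right)$-coalgebra homomorphism, characterized by $\left(\id_{A}\otimes\varepsilon_{P}\right)\circ\Xi = \beta_{H}$. The strategy is to show that both of the compositions $\left(\beta_{H}\otimes\id\right)\circ\beta_{H}$ and $\left(\id\otimes\Delta_{P}'\right)\circ\beta_{H}$, viewed as maps into $\underline{A}\otimes\QSym$ via the canonical identification $\underline{H}\otimes\underline{\QSym} = \underline{H\otimes\QSym} = \underline{A}$, are graded $\left(\mathbf{k},\underline{A}\right)$-coalgebra homomorphisms satisfying the same characterizing property; by uniqueness they will then agree.

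For the top path, Proposition \ref{prop.relative.composition.c2} applied with $p = \beta_{H}$ (a $\mathbf{k}$-algebra homomorphism) and $f = \beta_{H}$ (a $\left(\mathbf{k},\underline{H}\right)$-coalgebra homomorphism) yields the coalgebra-homomorphism property of $\left(\beta_{H}\otimes\id\right)\circ\beta_{H}$. The counit condition reduces, via the naturality identity $\left(\id\otimes\id\otimes\varepsilon_{P}\right)\circ\left(\beta_{H}\otimes\id\right) = \beta_{H}\circ\left(\id\otimes\varepsilon_{P}\right)$, to $\beta_{H}\circ\left(\id\otimes\varepsilon_{P}\right)\circ\beta_{H} = \beta_{H}\circ\id_{H} = \beta_{H}$ by part \textbf{(b)}. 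For the bottom path, Proposition \ref{prop.QSym.secondcomult.alt} identifies $\Delta_{P}'$ with $\beta_{\QSym}$, a $\left(\mathbf{k},\underline{\QSym}\right)$-coalgebra homomorphism by part \textbf{(a)} of the theorem applied to $\QSym$. Proposition \ref{prop.relative.composition.d} then shows that $\id\otimes\Delta_{P}' : \underline{H}\otimes\QSym \to \underline{H}\otimes\underline{\QSym}\otimes\QSym$ is a $\left(\underline{H},\underline{H}\otimes\underline{\QSym}\right)$-coalgebra homomorphism, and Proposition \ref{prop.relative.composition.e} composes this with $\beta_{H}$ to produce a $\left(\mathbf{k},\underline{A}\right)$-coalgebra homomorphism. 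The counit condition reduces to $\left(\id\otimes\varepsilon_{P}\right)\circ\Delta_{P}' = \left(\varepsilon_{P}\otimes\id\right)\circ\Delta_{P} = \id_{\QSym}$, which is the bialgebra counit axiom from Proposition \ref{prop.QSym.secondbialg}.

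The main obstacle is bookkeeping: at each step one must track which $\mathbf{k}$-algebra serves as the base ring, and verify compatibility of the various identifications $\underline{H}\otimes\underline{\QSym} = \underline{A}$ across the applications of the composition propositions. Once the setup is in place, each individual verification --- the hypotheses of Propositions \ref{prop.relative.composition.c2}, \ref{prop.relative.composition.d}, \ref{prop.relative.composition.e}, the naturality identity for the top path, and the counit identity for the bottom path --- is short and routine, as is gradedness of both composites.
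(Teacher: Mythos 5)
Your proposal follows essentially the same route as the paper: parts \textbf{(a)}, \textbf{(b)}, \textbf{(d)} come from Corollary \ref{cor.ABS.hopf.esc} with $A=\underline{H}$ and $\xi=\operatorname{id}$, and part \textbf{(c)} from a second application with $A=\underline{H}\otimes\underline{\QSym}$ and $\xi=\beta_{H}$, checking that both composites are graded $\left(\mathbf{k},\underline{H}\otimes\underline{\QSym}\right)$-coalgebra homomorphisms making the characterizing triangle commute and then invoking uniqueness --- even your choice of Propositions \ref{prop.relative.composition.c2}, \ref{prop.relative.composition.d} and \ref{prop.relative.composition.e} matches the paper's. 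One small caution: for the bottom-path counit identity you cite Proposition \ref{prop.QSym.secondbialg}, whose proof in the paper is itself derived from Theorem \ref{thm.QSym.bernstein}; the specific fact you need (that $\varepsilon_{P}$ is counital for $\Delta_{P}$) is independently elementary, so this is not a genuine gap, but the non-circular justification used in the paper is part \textbf{(b)} applied to $H=\QSym$ together with $\beta_{\QSym}=\Delta_{P}^{\prime}$.
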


Parts \textbf{(b)} and \textbf{(c)} of Theorem \ref{thm.QSym.bernstein} can be
combined into \textquotedblleft$\beta_{H}$ makes $H$ into a
$\operatorname*{QSym}\nolimits_{2}$-comodule, where $\operatorname*{QSym}%
\nolimits_{2}$ is the coalgebra $\left(  \operatorname*{QSym},\Delta
_{P}^{\prime},\varepsilon_{P}\right)  $\textquotedblright\ (the fact that this
$\operatorname*{QSym}\nolimits_{2}$ is actually a coalgebra follows from
Proposition \ref{prop.QSym.secondbialg}).

What Hazewinkel actually calls the Bernstein homomorphism in \cite[\S 18.24]%
{HazeWitt} is the $\mathbf{k}$-algebra homomorphism $H\rightarrow
\underline{H}\otimes\Lambda_{\mathbf{k}}$ obtained from our map $\beta
_{H}:H\rightarrow\underline{H}\otimes\operatorname*{QSym}\nolimits_{\mathbf{k}%
}$ by restricting the codomain when $H$ is both commutative and
cocommutative\footnote{Hazewinkel neglects to require the cocommutativity of
$H$ in \cite[\S 18.24]{HazeWitt}, but he uses it nevertheless.}. His
observation that the second comultiplication of $\Lambda_{\mathbf{k}}$ is a
particular case of the Bernstein homomorphism is what gave the original
motivation for the present note; its analogue for $\operatorname*{QSym}%
\nolimits_{\mathbf{k}}$ is our Proposition \ref{prop.QSym.secondcomult.alt}.

\begin{proof}
[Proof of Theorem \ref{thm.QSym.bernstein}.]Set $A=\underline{H}$ and
$\xi=\operatorname*{id}$. Then, the map $\xi_{\alpha}$ defined in Corollary
\ref{cor.ABS.hopf.esc} \textbf{(c)} is precisely the map $\xi_{\alpha}$
defined in Definition \ref{def.bernstein} (because $\xi^{\otimes
k}=\operatorname*{id}\nolimits^{\otimes k}=\operatorname*{id}$). Thus, we can
afford calling both maps $\xi_{\alpha}$ without getting confused.

\textbf{(a)} Corollary \ref{cor.ABS.hopf.esc} \textbf{(a)} shows that there
exists a unique graded $\left(  \mathbf{k},\underline{A}\right)  $-coalgebra
homomorphism $\Xi:H\rightarrow\underline{A}\otimes\operatorname*{QSym}%
\nolimits_{\mathbf{k}}$ for which the diagram
(\ref{eq.cor.ABS.hopf.esc.a.diag}) is commutative. Since $A=\underline{H}$ and
$\xi=\operatorname*{id}$, we can rewrite this as follows: There exists a
unique graded $\left(  \mathbf{k},\underline{H}\right)  $-coalgebra
homomorphism $\Xi:H\rightarrow\underline{H}\otimes\operatorname*{QSym}%
\nolimits_{\mathbf{k}}$ for which the diagram
\begin{equation}%
\xymatrix{
H \ar[rr]^-{\Xi} \ar[dr]_{\id} & & \underline{H} \otimes\QSym\ar
[dl]^{\id_H \otimes\varepsilon_P} \\
& \underline{H} &
}
\label{pf.thm.QSym.bernstein.1}%
\end{equation}
is commutative. Consider this $\Xi$. Corollary \ref{cor.ABS.hopf.esc}
\textbf{(c)} shows that this homomorphism $\Xi$ is given by%
\[
\Xi\left(  h\right)  =\sum_{\alpha\in\operatorname*{Comp}}\xi_{\alpha}\left(
h\right)  \otimes M_{\alpha}\ \ \ \ \ \ \ \ \ \ \text{for every }h\in H.
\]
Comparing this equality with (\ref{eq.def.bernstein.def}), we obtain
$\Xi\left(  h\right)  =\beta_{H}\left(  h\right)  $ for every $h\in H$. In
other words, $\Xi=\beta_{H}$. Thus, $\beta_{H}$ is a graded $\left(
\mathbf{k},\underline{H}\right)  $-coalgebra homomorphism (since $\Xi$ is a
graded $\left(  \mathbf{k},\underline{H}\right)  $-coalgebra homomorphism).

Corollary \ref{cor.ABS.hopf.esc} \textbf{(b)} shows that $\Xi$ is a
$\mathbf{k}$-algebra homomorphism. In other words, $\beta_{H}$ is a
$\mathbf{k}$-algebra homomorphism (since $\Xi=\beta_{H}$). This completes the
proof of Theorem \ref{thm.QSym.bernstein} \textbf{(a)}.

\textbf{(b)} Consider the map $\Xi$ defined in our above proof of Theorem
\ref{thm.QSym.bernstein} \textbf{(a)}. We have shown that $\Xi=\beta_{H}$.

The commutative diagram (\ref{pf.thm.QSym.bernstein.1}) shows that $\left(
\operatorname*{id}\otimes\varepsilon_{P}\right)  \circ\Xi=\operatorname*{id}$.
In other words, $\left(  \operatorname*{id}\otimes\varepsilon_{P}\right)
\circ\beta_{H}=\operatorname*{id}$ (since $\Xi=\beta_{H}$). This proves
Theorem \ref{thm.QSym.bernstein} \textbf{(b)}.

\textbf{(c)} Theorem \ref{thm.QSym.bernstein} \textbf{(a)} shows that the map
$\beta_{H}$ is a $\mathbf{k}$-algebra homomorphism $H\rightarrow
\underline{H}\otimes\operatorname*{QSym}\nolimits_{\mathbf{k}}$ and a graded
$\left(  \mathbf{k},\underline{H}\right)  $-coalgebra homomorphism. Theorem
\ref{thm.QSym.bernstein} \textbf{(a)} (applied to $\operatorname*{QSym}%
\nolimits_{\mathbf{k}}$ instead of $H$) shows that the map $\beta
_{\operatorname*{QSym}\nolimits_{\mathbf{k}}}$ is a $\mathbf{k}$-algebra
homomorphism $\operatorname*{QSym}\nolimits_{\mathbf{k}}\rightarrow
\underline{\operatorname*{QSym}\nolimits_{\mathbf{k}}}\otimes
\operatorname*{QSym}\nolimits_{\mathbf{k}}$ and a graded $\left(
\mathbf{k},\underline{\operatorname*{QSym}\nolimits_{\mathbf{k}}}\right)
$-coalgebra homomorphism. Since $\Delta_{P}^{\prime}=\beta
_{\operatorname*{QSym}\nolimits_{\mathbf{k}}}$ (by Proposition
\ref{prop.QSym.secondcomult.alt}), this rewrites as follows: The map
$\Delta_{P}^{\prime}$ is a $\mathbf{k}$-algebra homomorphism
$\operatorname*{QSym}\nolimits_{\mathbf{k}}\rightarrow
\underline{\operatorname*{QSym}\nolimits_{\mathbf{k}}}\otimes
\operatorname*{QSym}\nolimits_{\mathbf{k}}$ and a graded $\left(
\mathbf{k},\underline{\operatorname*{QSym}\nolimits_{\mathbf{k}}}\right)
$-coalgebra homomorphism.

Applying Corollary \ref{cor.ABS.hopf.esc} \textbf{(a)} to $\underline{H}%
\otimes\underline{\operatorname*{QSym}\nolimits_{\mathbf{k}}}$ and $\beta_{H}$
instead of $A$ and $\xi$, we see that there exists a unique graded $\left(
\mathbf{k},\underline{H}\otimes\underline{\operatorname*{QSym}%
\nolimits_{\mathbf{k}}}\right)  $-coalgebra homomorphism $\Xi:H\rightarrow
\underline{H}\otimes\underline{\operatorname*{QSym}\nolimits_{\mathbf{k}}%
}\otimes\operatorname*{QSym}\nolimits_{\mathbf{k}}$ for which the diagram
\begin{equation}%
\xymatrix{
H \ar[rr]^-{\Xi} \ar[dr]_{\beta_H} & & \underline{H} \otimes\underline{\QSym}
\otimes\QSym\ar[dl]^{\ \ \ \ \id_{\underline{H} \otimes\underline{\QSym}}
\otimes\varepsilon_P} \\
& \underline{H} \otimes\underline{\QSym} &
}
\label{pf.thm.QSym.bernstein.diagram}%
\end{equation}
is commutative. Thus, if we have two graded $\left(  \mathbf{k},\underline{H}%
\otimes\underline{\operatorname*{QSym}\nolimits_{\mathbf{k}}}\right)
$-coalgebra homomorphisms $\Xi:H\rightarrow\underline{H}\otimes
\underline{\operatorname*{QSym}\nolimits_{\mathbf{k}}}\otimes
\operatorname*{QSym}\nolimits_{\mathbf{k}}$ for which the diagram
(\ref{pf.thm.QSym.bernstein.diagram}) is commutative, then these two
homomorphisms must be identical. We will now show that the two homomorphisms
$\left(  \beta_{H}\otimes\operatorname*{id}\right)  \circ\beta_{H}$ and
$\left(  \operatorname*{id}\otimes\Delta_{P}^{\prime}\right)  \circ\beta_{H}$
both fit the bill; this will then yield that $\left(  \beta_{H}\otimes
\operatorname*{id}\right)  \circ\beta_{H}=\left(  \operatorname*{id}%
\otimes\Delta_{P}^{\prime}\right)  \circ\beta_{H}$, and thus Theorem
\ref{thm.QSym.bernstein} \textbf{(c)} will follow.

Recall that $\beta_{H}$ and $\Delta_{P}^{\prime}$ are graded maps. Thus, so
are $\left(  \beta_{H}\otimes\operatorname*{id}\right)  \circ\beta_{H}$ and
$\left(  \operatorname*{id}\otimes\Delta_{P}^{\prime}\right)  \circ\beta_{H}$.
Moreover, $\beta_{H}$ is a $\left(  \mathbf{k},\underline{H}\right)
$-coalgebra homomorphism, and $\Delta_{P}^{\prime}$ is a $\left(
\mathbf{k},\underline{\operatorname*{QSym}\nolimits_{\mathbf{k}}}\right)
$-coalgebra homomorphism. From this, it is easy to see that $\left(  \beta
_{H}\otimes\operatorname*{id}\right)  \circ\beta_{H}$ and $\left(
\operatorname*{id}\otimes\Delta_{P}^{\prime}\right)  \circ\beta_{H}$ are
$\left(  \mathbf{k},\underline{H}\otimes\underline{\operatorname*{QSym}%
\nolimits_{\mathbf{k}}}\right)  $-coalgebra
homomorphisms\footnote{\textit{Proof.} Proposition
\ref{prop.relative.composition.c2} (applied to $H$, $\underline{H}%
\otimes\operatorname*{QSym}\nolimits_{\mathbf{k}}$, $H$, $\operatorname*{QSym}%
\nolimits_{\mathbf{k}}$, $\beta_{H}$ and $\beta_{H}$ instead of $A$, $B$, $H$,
$G$, $f$ and $p$) shows that $\left(  \beta_{H}\otimes\operatorname*{id}%
\right)  \circ\beta_{H}$ is a $\left(  \mathbf{k},\underline{H}\otimes
\underline{\operatorname*{QSym}\nolimits_{\mathbf{k}}}\right)  $-coalgebra
homomorphism. It remains to show that $\left(  \operatorname*{id}\otimes
\Delta_{P}^{\prime}\right)  \circ\beta_{H}$ is a $\left(  \mathbf{k}%
,\underline{H}\otimes\underline{\operatorname*{QSym}\nolimits_{\mathbf{k}}%
}\right)  $-coalgebra homomorphism.
\par
Recall that $\Delta_{P}^{\prime}$ is a $\left(  \mathbf{k}%
,\underline{\operatorname*{QSym}\nolimits_{\mathbf{k}}}\right)  $-coalgebra
homomorphism $\operatorname*{QSym}\nolimits_{\mathbf{k}}\rightarrow
\underline{\operatorname*{QSym}\nolimits_{\mathbf{k}}}\otimes
\operatorname*{QSym}\nolimits_{\mathbf{k}}$. Hence, Proposition
\ref{prop.relative.composition.d} (applied to $\underline{\operatorname*{QSym}%
\nolimits_{\mathbf{k}}}$, $\underline{H}$, $\operatorname*{QSym}%
\nolimits_{\mathbf{k}}$, $\underline{\operatorname*{QSym}\nolimits_{\mathbf{k}%
}}\otimes\operatorname*{QSym}\nolimits_{\mathbf{k}}$ and $\Delta_{P}^{\prime}$
instead of $A$, $B$, $H$, $G$ and $f$) shows that $\operatorname*{id}%
\otimes\Delta_{P}^{\prime}:\underline{H}\otimes\operatorname*{QSym}%
\nolimits_{\mathbf{k}}\rightarrow\underline{H}\otimes
\underline{\operatorname*{QSym}\nolimits_{\mathbf{k}}}\otimes
\operatorname*{QSym}\nolimits_{\mathbf{k}}$ is an $\left(  \underline{H}%
,\underline{H}\otimes\underline{\operatorname*{QSym}\nolimits_{\mathbf{k}}%
}\right)  $-coalgebra homomorphism. Therefore, Proposition
\ref{prop.relative.composition.e} (applied to $\underline{H}$, $\underline{H}%
\otimes\underline{\operatorname*{QSym}\nolimits_{\mathbf{k}}}$, $H$,
$\underline{H}\otimes\operatorname*{QSym}\nolimits_{\mathbf{k}}$,
$\underline{H}\otimes\underline{\operatorname*{QSym}\nolimits_{\mathbf{k}}%
}\otimes\operatorname*{QSym}\nolimits_{\mathbf{k}}$, $\beta_{H}$ and
$\operatorname*{id}\otimes\Delta_{P}^{\prime}$ instead of $A$, $B$, $H$, $G$,
$I$, $f$ and $g$) shows that $\left(  \operatorname*{id}\otimes\Delta
_{P}^{\prime}\right)  \circ\beta_{H}$ is a $\left(  \mathbf{k},\underline{H}%
\otimes\underline{\operatorname*{QSym}\nolimits_{\mathbf{k}}}\right)
$-coalgebra homomorphism. This completes the proof.}.

Now, we shall show that the diagrams%
\begin{equation}%
\xymatrix{
H \ar[rr]^-{\left(\beta_H\otimes\id\right)\circ\beta_H}\ar[dr]_{\beta_H}
& & \underline{H} \otimes\underline{\QSym} \otimes\QSym\ar[dl]^{\ \ \ \ \id
_{\underline{H} \otimes\underline{\QSym}} \otimes\varepsilon_P} \\
& \underline{H} \otimes\underline{\QSym} &
}
\label{pf.thm.QSym.bernstein.diagram.1}%
\end{equation}
and%
\begin{equation}%
\xymatrix{
H \ar[rr]^-{\left(\id\otimes\Delta'_P\right)\circ\beta_H}\ar[dr]_{\beta_H}
& & \underline{H} \otimes\underline{\QSym} \otimes\QSym\ar[dl]^{\ \ \ \ \id
_{\underline{H} \otimes\underline{\QSym}} \otimes\varepsilon_P} \\
& \underline{H} \otimes\underline{\QSym} &
}
\label{pf.thm.QSym.bernstein.diagram.2}%
\end{equation}
are commutative. This follows from the computations%
\[
\underbrace{\left(  \operatorname*{id}\nolimits_{\underline{H}\otimes
\underline{\operatorname*{QSym}\nolimits_{\mathbf{k}}}}\otimes\varepsilon
_{P}\right)  \circ\left(  \beta_{H}\otimes\operatorname*{id}\right)  }%
_{=\beta_{H}\otimes\varepsilon_{P}=\beta_{H}\circ\left(  \operatorname*{id}%
\otimes\varepsilon_{P}\right)  }\circ\beta_{H}=\beta_{H}\circ
\underbrace{\left(  \operatorname*{id}\otimes\varepsilon_{P}\right)
\circ\beta_{H}}_{\substack{=\operatorname*{id}\\\text{(by Theorem
\ref{thm.QSym.bernstein} \textbf{(b)})}}}=\beta_{H}%
\]
and%
\begin{align*}
&  \left(  \underbrace{\operatorname*{id}\nolimits_{\underline{H}%
\otimes\underline{\operatorname*{QSym}\nolimits_{\mathbf{k}}}}}%
_{=\operatorname*{id}\nolimits_{\underline{H}}\otimes\operatorname*{id}%
\nolimits_{\underline{\operatorname*{QSym}\nolimits_{\mathbf{k}}}}}%
\otimes\varepsilon_{P}\right)  \circ\left(  \underbrace{\operatorname*{id}%
}_{=\operatorname*{id}\nolimits_{\underline{H}}}\otimes\underbrace{\Delta
_{P}^{\prime}}_{=\beta_{\operatorname*{QSym}\nolimits_{\mathbf{k}}}}\right)
\circ\beta_{H}\\
&  =\underbrace{\left(  \operatorname*{id}\nolimits_{\underline{H}}%
\otimes\operatorname*{id}\nolimits_{\underline{\operatorname*{QSym}%
\nolimits_{\mathbf{k}}}}\otimes\varepsilon_{P}\right)  \circ\left(
\operatorname*{id}\nolimits_{\underline{H}}\otimes\beta_{\operatorname*{QSym}%
\nolimits_{\mathbf{k}}}\right)  }_{=\operatorname*{id}\nolimits_{\underline{H}%
}\otimes\left(  \left(  \operatorname*{id}%
\nolimits_{\underline{\operatorname*{QSym}\nolimits_{\mathbf{k}}}}%
\otimes\varepsilon_{P}\right)  \circ\beta_{\operatorname*{QSym}%
\nolimits_{\mathbf{k}}}\right)  }\circ\beta_{H}\\
&  =\left(  \operatorname*{id}\nolimits_{\underline{H}}\otimes
\underbrace{\left(  \left(  \operatorname*{id}%
\nolimits_{\underline{\operatorname*{QSym}\nolimits_{\mathbf{k}}}}%
\otimes\varepsilon_{P}\right)  \circ\beta_{\operatorname*{QSym}%
\nolimits_{\mathbf{k}}}\right)  }_{\substack{=\operatorname*{id}\\\text{(by
Theorem \ref{thm.QSym.bernstein} \textbf{(b)},}\\\text{applied to
}\operatorname*{QSym}\nolimits_{\mathbf{k}}\text{ instead of }H\text{)}%
}}\right)  \circ\beta_{H}\\
&  =\underbrace{\left(  \operatorname*{id}\nolimits_{\underline{H}}%
\otimes\operatorname*{id}\right)  }_{=\operatorname*{id}}\circ\beta_{H}%
=\beta_{H}.
\end{align*}

Thus, we know that $\left(  \beta_{H}\otimes\operatorname*{id}\right)
\circ\beta_{H}$ and $\left(  \operatorname*{id}\otimes\Delta_{P}^{\prime
}\right)  \circ\beta_{H}$ are two graded $\left(  \mathbf{k},\underline{H}%
\otimes\underline{\operatorname*{QSym}\nolimits_{\mathbf{k}}}\right)
$-coalgebra homomorphisms $\Xi:H\rightarrow\underline{H}\otimes
\underline{\operatorname*{QSym}\nolimits_{\mathbf{k}}}\otimes
\operatorname*{QSym}\nolimits_{\mathbf{k}}$ for which the diagram
(\ref{pf.thm.QSym.bernstein.diagram}) is commutative (since the diagrams
(\ref{pf.thm.QSym.bernstein.diagram.1}) and
(\ref{pf.thm.QSym.bernstein.diagram.2}) are commutative). But we have shown
before that any two such homomorphisms must be identical. Thus, we conclude
that $\left(  \beta_{H}\otimes\operatorname*{id}\right)  \circ\beta
_{H}=\left(  \operatorname*{id}\otimes\Delta_{P}^{\prime}\right)  \circ
\beta_{H}$. This completes the proof of Theorem \ref{thm.QSym.bernstein}
\textbf{(c)}.

\textbf{(d)} Consider the map $\Xi$ defined in our above proof of Theorem
\ref{thm.QSym.bernstein} \textbf{(a)}. We have shown that $\Xi=\beta_{H}$.

Assume that $H$ is cocommutative. Corollary \ref{cor.ABS.hopf.esc}
\textbf{(d)} then shows that $\Xi\left(  H\right)  $ is a subset of the
subring $\underline{A}\otimes\Lambda_{\mathbf{k}}$ of $\underline{A}%
\otimes\operatorname*{QSym}\nolimits_{\mathbf{k}}$. In other words, $\beta
_{H}\left(  H\right)  $ is a subset of the subring $\underline{H}%
\otimes\Lambda_{\mathbf{k}}$ of $\underline{H}\otimes\operatorname*{QSym}%
\nolimits_{\mathbf{k}}$ (since $\Xi=\beta_{H}$ and $A=H$). This proves Theorem
\ref{thm.QSym.bernstein} \textbf{(d)}.

(Alternatively, we could prove \textbf{(d)} by checking that for any element
$h$ of a commutative cocommutative Hopf algebra $H$, the element $\xi_{\alpha
}\left(  h\right)  $ of $H$ depends only on the result of sorting $\alpha$,
rather than on the composition $\alpha$ itself.)
\end{proof}

\begin{proof}
[Proof of Proposition \ref{prop.QSym.secondbialg}.]Let $\tau$ be the twist map
$\tau_{\operatorname*{QSym}\nolimits_{\mathbf{k}},\operatorname*{QSym}%
\nolimits_{\mathbf{k}}}:\operatorname*{QSym}\nolimits_{\mathbf{k}}%
\otimes\operatorname*{QSym}\nolimits_{\mathbf{k}}\rightarrow
\operatorname*{QSym}\nolimits_{\mathbf{k}}\otimes\operatorname*{QSym}%
\nolimits_{\mathbf{k}}$. This twist map clearly satisfies $\tau\circ
\tau=\operatorname*{id}$. Hence, $\tau\circ\underbrace{\Delta_{P}^{\prime}%
}_{=\tau\circ\Delta_{P}}=\underbrace{\tau\circ\tau}_{=\operatorname*{id}}%
\circ\Delta_{P}=\Delta_{P}$.

Theorem \ref{thm.QSym.bernstein} \textbf{(c)} (applied to
$H=\operatorname*{QSym}\nolimits_{\mathbf{k}}$) shows that the diagram%
\[
\xymatrixcolsep{5pc}\xymatrix{
\QSym \ar[r]^{\beta_{\QSym}} \ar[d]_{\beta_{\QSym}} & \underline{\QSym} \otimes \QSym \ar[d]_{\beta_{\QSym}\otimes\id} \\
\underline{\QSym} \otimes \QSym \ar[r]_-{\id\otimes\Delta'_P} & \underline{\QSym} \otimes \underline{\QSym} \otimes \QSym
}
\]
is commutative. In other words, $\left(  \operatorname*{id}\otimes\Delta
_{P}^{\prime}\right)  \circ\beta_{\operatorname*{QSym}\nolimits_{\mathbf{k}}%
}=\left(  \beta_{\operatorname*{QSym}\nolimits_{\mathbf{k}}}\otimes
\operatorname*{id}\right)  \circ\beta_{\operatorname*{QSym}%
\nolimits_{\mathbf{k}}}$. Since $\beta_{\operatorname*{QSym}%
\nolimits_{\mathbf{k}}}=\Delta_{P}^{\prime}$ (by Proposition
\ref{prop.QSym.secondcomult.alt}), this rewrites as $\left(
\operatorname*{id}\otimes\Delta_{P}^{\prime}\right)  \circ\Delta_{P}^{\prime
}=\left(  \Delta_{P}^{\prime}\otimes\operatorname*{id}\right)  \circ\Delta
_{P}^{\prime}$. Thus, the operation $\Delta_{P}^{\prime}$ is coassociative.
Therefore, the operation $\Delta_{P}=\tau\circ\Delta_{P}^{\prime}$ is also
coassociative (because the coassociativity of a map $H\rightarrow H\otimes H$
does not change if we compose this map with the twist map $\tau_{H,H}:H\otimes
H\rightarrow H\otimes H$). It is furthermore easy to see that the operation
$\varepsilon_{P}$ is counital with respect to the operation $\Delta_{P}$ (see,
for example, \cite[\S 11.45]{HazeWitt}). Hence, the $\mathbf{k}$-module
$\operatorname*{QSym}\nolimits_{\mathbf{k}}$, equipped with the
comultiplication $\Delta_{P}$ and the counit $\varepsilon_{P}$, is a
$\mathbf{k}$-coalgebra. Our goal is to prove that it is a $\mathbf{k}%
$-bialgebra. Hence, it remains to show that $\Delta_{P}$ and $\varepsilon_{P}$
are $\mathbf{k}$-algebra homomorphisms. For $\varepsilon_{P}$, this is again
obvious (indeed, $\varepsilon_{P}$ sends any $f\in\operatorname*{QSym}%
\nolimits_{\mathbf{k}}$ to $f\left(  1,0,0,0,\ldots\right)  $). It remains to
prove that $\Delta_{P}$ is a $\mathbf{k}$-algebra homomorphism.

The map $\beta_{\operatorname*{QSym}\nolimits_{\mathbf{k}}}$ is a $\mathbf{k}%
$-algebra homomorphism $\operatorname*{QSym}\nolimits_{\mathbf{k}}%
\rightarrow\underline{\operatorname*{QSym}\nolimits_{\mathbf{k}}}%
\otimes\operatorname*{QSym}\nolimits_{\mathbf{k}}$ (by Theorem
\ref{thm.QSym.bernstein} \textbf{(a)}, applied to $H=\operatorname*{QSym}%
\nolimits_{\mathbf{k}}$). In other words, the map $\Delta_{P}^{\prime}$ is a
$\mathbf{k}$-algebra homomorphism $\operatorname*{QSym}\nolimits_{\mathbf{k}%
}\rightarrow\operatorname*{QSym}\nolimits_{\mathbf{k}}\otimes
\operatorname*{QSym}\nolimits_{\mathbf{k}}$ (since $\beta
_{\operatorname*{QSym}\nolimits_{\mathbf{k}}}=\Delta_{P}^{\prime}$, and since
$\underline{\operatorname*{QSym}\nolimits_{\mathbf{k}}}=\operatorname*{QSym}%
\nolimits_{\mathbf{k}}$ as $\mathbf{k}$-algebras). Thus, $\Delta_{P}=\tau
\circ\Delta_{P}^{\prime}$ is also a $\mathbf{k}$-algebra homomorphism (since
both $\tau$ and $\Delta_{P}^{\prime}$ are $\mathbf{k}$-algebra homomorphisms).
This completes the proof of Proposition \ref{prop.QSym.secondbialg}.
\end{proof}

\section{Remark on antipodes}

We have hitherto not really used the antipode of a Hopf algebra; thus, we
could just as well have replaced the words \textquotedblleft Hopf
algebra\textquotedblright\ by \textquotedblleft bialgebra\textquotedblright%
\ throughout the entire preceding text\footnote{That said, we would not have
gained anything this way, because any connected graded $\mathbf{k}$-bialgebra
is a $\mathbf{k}$-Hopf algebra (see \cite[Proposition 1.4.16]{Reiner}).}. Let
us now connect the preceding results with antipodes.

The antipode of any Hopf algebra $H$ will be denoted by $S_{H}$.

\begin{proposition}
\label{prop.relative.antipode}Let $\mathbf{k}$ be a commutative ring. Let $A$
be a commutative $\mathbf{k}$-algebra. Let $H$ be a $\mathbf{k}$-Hopf algebra.
Let $G$ be an $A$-Hopf algebra. Then, every $\mathbf{k}$-algebra homomorphism
$f:H\rightarrow G$ which is a $\left(  \mathbf{k},\underline{A}\right)
$-coalgebra homomorphism must also satisfy $f\circ S_{H}=S_{G}\circ f$.
\end{proposition}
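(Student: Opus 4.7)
The strategy is to reduce the statement to the classical fact that any bialgebra homomorphism between Hopf algebras automatically commutes with the antipodes, applied to the $A$-linear avatar $f^{\sharp}$ of $f$. The point of the extension-of-scalars machinery developed in Definition \ref{def.extend-scalars} and Definition \ref{def.kA-coalg} is precisely to let us do this.

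First I would observe that $\underline{A}\otimes H$ carries a canonical structure of an $A$-Hopf algebra: its $A$-algebra and $A$-coalgebra structures are those of Definition \ref{def.extend-scalars} \textbf{(c)} and \textbf{(d)}, and its antipode is $\operatorname{id}_{A}\otimes S_{H}$. This is a standard (and easy) check: $\operatorname{id}_{A}\otimes S_{H}$ is $A$-linear, and applying it to the Sweedler components of $\Delta_{\underline{A}\otimes H} = \operatorname{id}_{A}\otimes\Delta_{H}$ and then multiplying gives $1_{A}\otimes\bigl(\operatorname{id}\star S_{H}\bigr)(h) = 1_{A}\otimes u_{H}(\varepsilon_{H}(h))$, which is exactly the counit-unit composite on $\underline{A}\otimes H$. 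So $\underline{A}\otimes H$ is indeed an $A$-Hopf algebra with antipode $S_{\underline{A}\otimes H} = \operatorname{id}_{A}\otimes S_{H}$.

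Next I would combine the hypotheses on $f$. Since $f$ is a $\mathbf{k}$-algebra homomorphism, Proposition \ref{prop.relative.alg} shows that $f^{\sharp}:\underline{A}\otimes H\to G$ is an $A$-algebra homomorphism. Since $f$ is a $\bigl(\mathbf{k},\underline{A}\bigr)$-coalgebra homomorphism, $f^{\sharp}$ is an $A$-coalgebra homomorphism by definition. Hence $f^{\sharp}$ is an $A$-bialgebra homomorphism between the $A$-Hopf algebras $\underline{A}\otimes H$ and $G$. The classical fact (e.g., \cite[Proposition 1.37]{Reiner}) that any bialgebra homomorphism between Hopf algebras commutes with antipodes—proved by noting that both $S_{G}\circ f^{\sharp}$ and $f^{\sharp}\circ S_{\underline{A}\otimes H}$ are two-sided convolution inverses of $f^{\sharp}$ in the convolution algebra $\operatorname{Hom}_{A}(\underline{A}\otimes H,G)$ and thus coincide—yields
\[
f^{\sharp}\circ S_{\underline{A}\otimes H} = S_{G}\circ f^{\sharp},
\]
i.e., $f^{\sharp}\circ\bigl(\operatorname{id}_{A}\otimes S_{H}\bigr) = S_{G}\circ f^{\sharp}$.

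Finally I would evaluate both sides on pure tensors of the form $1\otimes h$ for $h\in H$. The left-hand side gives $f^{\sharp}\bigl(1\otimes S_{H}(h)\bigr) = f\bigl(S_{H}(h)\bigr) = (f\circ S_{H})(h)$, while the right-hand side gives $S_{G}\bigl(f^{\sharp}(1\otimes h)\bigr) = S_{G}(f(h)) = (S_{G}\circ f)(h)$. This proves $f\circ S_{H} = S_{G}\circ f$ as desired. The only mildly nontrivial step is the identification of the antipode of $\underline{A}\otimes H$; once that is in hand, everything else is formal. I do not anticipate any real obstacle.
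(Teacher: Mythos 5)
Your proof is correct and follows essentially the same route as the paper's: identify the antipode of $\underline{A}\otimes H$ as $\operatorname{id}_{A}\otimes S_{H}$, upgrade $f^{\sharp}$ to an $A$-bialgebra (hence $A$-Hopf algebra) homomorphism via Proposition \ref{prop.relative.alg} and the definition of $\left(\mathbf{k},\underline{A}\right)$-coalgebra homomorphisms, invoke the standard fact that such homomorphisms commute with antipodes, and restrict back along $h\mapsto 1\otimes h$. No gaps.
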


\begin{proof}
[Proof of Proposition \ref{prop.relative.antipode}.]We know that $H$ is a
$\mathbf{k}$-Hopf algebra. Thus, $\underline{A}\otimes H$ is an $A$-Hopf
algebra. Its definition by extending scalars yields that its antipode is given
by $S_{\underline{A}\otimes H}=\operatorname*{id}\nolimits_{A}\otimes S_{H}$.

Let $f:H\rightarrow G$ be a $\mathbf{k}$-algebra homomorphism which is a
$\left(  \mathbf{k},\underline{A}\right)  $-coalgebra homomorphism. Then,
$f^{\sharp}:\underline{A}\otimes H\rightarrow G$ is an $A$-coalgebra
homomorphism (since $f$ is a $\left(  \mathbf{k},\underline{A}\right)
$-coalgebra homomorphism) and an $A$-algebra homomorphism (by Proposition
\ref{prop.relative.alg}). Hence, $f^{\sharp}$ is an $A$-bialgebra
homomorphism, thus an $A$-Hopf algebra homomorphism (since every $A$-bialgebra
homomorphism between two $A$-Hopf algebras is an $A$-Hopf algebra
homomorphism). Thus, $f^{\sharp}$ commutes with the antipodes, i.e., satisfies
$f^{\sharp}\circ S_{\underline{A}\otimes H}=S_{G}\circ f^{\sharp}$.

Now, let $\iota$ be the canonical $\mathbf{k}$-module homomorphism
$H\rightarrow\underline{A}\otimes H,\ h\mapsto1\otimes h$. Then, $\left(
\operatorname*{id}\nolimits_{A}\otimes S_{H}\right)  \circ\iota=\iota\circ
S_{H}$. On the other hand, $f^{\sharp}\circ\iota=f$ (this is easy to check).
Thus,%
\begin{align*}
\underbrace{f}_{=f^{\sharp}\circ\iota}\circ S_{H}  &  =f^{\sharp}%
\circ\underbrace{\iota\circ S_{H}}_{=\left(  \operatorname*{id}\nolimits_{A}%
\otimes S_{H}\right)  \circ\iota}=f^{\sharp}\circ\underbrace{\left(
\operatorname*{id}\nolimits_{A}\otimes S_{H}\right)  }_{=S_{\underline{A}%
\otimes H}}\circ\iota=\underbrace{f^{\sharp}\circ S_{\underline{A}\otimes H}%
}_{=S_{G}\circ f^{\sharp}}\circ\iota\\
&  =S_{G}\circ\underbrace{f^{\sharp}\circ\iota}_{=f}=S_{G}\circ f.
\end{align*}
This proves Proposition \ref{prop.relative.antipode}.
\end{proof}

\begin{corollary}
\label{cor.QSym.bernstein.antipode}Let $\mathbf{k}$ be a commutative ring. Let
$H$ be a commutative connected graded $\mathbf{k}$-Hopf algebra. Define a map
$\beta_{H}:H\rightarrow\underline{H}\otimes\operatorname*{QSym}%
\nolimits_{\mathbf{k}}$ as in Definition \ref{def.bernstein}. Then,%
\[
\beta_{H}\circ S_{H}=\left(  \operatorname*{id}\nolimits_{H}\otimes
S_{\operatorname*{QSym}\nolimits_{\mathbf{k}}}\right)  \circ\beta_{H}.
\]

\end{corollary}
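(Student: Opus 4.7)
The plan is to obtain this identity as an immediate consequence of Proposition \ref{prop.relative.antipode}, using the fact that $\beta_H$ has already been shown (in Theorem \ref{thm.QSym.bernstein} \textbf{(a)}) to be a $\mathbf{k}$-algebra homomorphism that is simultaneously a graded $\left(\mathbf{k},\underline{H}\right)$-coalgebra homomorphism. The idea is that once we recognize $\underline{H}\otimes\operatorname{QSym}_{\mathbf{k}}$ as an $\underline{H}$-Hopf algebra (via extension of scalars from $\mathbf{k}$ to $\underline{H}$, which is legitimate because $H$ is commutative), Proposition \ref{prop.relative.antipode} forces $\beta_H$ to commute with the antipodes.

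Concretely, I would first apply Proposition \ref{prop.relative.antipode} to $A=\underline{H}$, to the Hopf algebra $H$, to $G=\underline{H}\otimes\operatorname{QSym}_{\mathbf{k}}$ (the $\underline{H}$-Hopf algebra obtained by extending scalars), and to the map $f=\beta_H$. The two required hypotheses — namely that $\beta_H$ is a $\mathbf{k}$-algebra homomorphism and a $\left(\mathbf{k},\underline{H}\right)$-coalgebra homomorphism — are both furnished verbatim by Theorem \ref{thm.QSym.bernstein} \textbf{(a)}. The conclusion of Proposition \ref{prop.relative.antipode} is then
\[
\beta_H\circ S_H=S_{\underline{H}\otimes\operatorname{QSym}_{\mathbf{k}}}\circ\beta_H.
\]

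It remains to identify the antipode $S_{\underline{H}\otimes\operatorname{QSym}_{\mathbf{k}}}$. Since the $\underline{H}$-Hopf algebra structure on $\underline{H}\otimes\operatorname{QSym}_{\mathbf{k}}$ is defined by extension of scalars from $\operatorname{QSym}_{\mathbf{k}}$ (Definition \ref{def.extend-scalars} \textbf{(f)}), its antipode is $\operatorname{id}_{\underline{H}}\otimes S_{\operatorname{QSym}_{\mathbf{k}}}$; this appears already inside the proof of Proposition \ref{prop.relative.antipode}. Substituting this into the displayed equality and using $\operatorname{id}_{\underline{H}}=\operatorname{id}_H$ yields the desired identity $\beta_H\circ S_H=\left(\operatorname{id}_H\otimes S_{\operatorname{QSym}_{\mathbf{k}}}\right)\circ\beta_H$.

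There is essentially no obstacle here: the entire content has been absorbed into Proposition \ref{prop.relative.antipode} and Theorem \ref{thm.QSym.bernstein} \textbf{(a)}. The only point that might warrant a line of comment is the identification of the antipode of the extended-scalars Hopf algebra, but this is built into Definition \ref{def.extend-scalars} and requires no further argument.
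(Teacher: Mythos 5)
Your proof is correct and is essentially identical to the one in the paper: both apply Proposition \ref{prop.relative.antipode} with $A=\underline{H}$, $G=\underline{H}\otimes\operatorname{QSym}_{\mathbf{k}}$ and $f=\beta_{H}$, citing Theorem \ref{thm.QSym.bernstein} \textbf{(a)} for the hypotheses, and then identify $S_{\underline{H}\otimes\operatorname{QSym}_{\mathbf{k}}}$ with $\operatorname{id}_{H}\otimes S_{\operatorname{QSym}_{\mathbf{k}}}$ via the extension-of-scalars construction.
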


\begin{proof}
[Proof of Corollary \ref{cor.QSym.bernstein.antipode}.]Theorem
\ref{thm.QSym.bernstein} \textbf{(a)} shows that the map $\beta_{H}$ is a
$\mathbf{k}$-algebra homomorphism $H\rightarrow\underline{H}\otimes
\operatorname*{QSym}\nolimits_{\mathbf{k}}$ and a graded $\left(
\mathbf{k},\underline{H}\right)  $-coalgebra homomorphism. Thus, Proposition
\ref{prop.relative.antipode} (applied to $A=H$, $G=\underline{H}%
\otimes\operatorname*{QSym}\nolimits_{\mathbf{k}}$ and $f=\beta_{H}$) shows
that $\beta_{H}\circ S_{H}=S_{\underline{H}\otimes\operatorname*{QSym}%
\nolimits_{\mathbf{k}}}\circ\beta_{H}$.

But the $H$-Hopf algebra $\underline{H}\otimes\operatorname*{QSym}%
\nolimits_{\mathbf{k}}$ is defined by extension of scalars; thus, its antipode
is given by $S_{\underline{H}\otimes\operatorname*{QSym}\nolimits_{\mathbf{k}%
}}=\operatorname*{id}\nolimits_{H}\otimes S_{\operatorname*{QSym}%
\nolimits_{\mathbf{k}}}$. Hence,%
\[
\beta_{H}\circ S_{H}=\underbrace{S_{\underline{H}\otimes\operatorname*{QSym}%
\nolimits_{\mathbf{k}}}}_{=\operatorname*{id}\nolimits_{H}\otimes
S_{\operatorname*{QSym}\nolimits_{\mathbf{k}}}}\circ\beta_{H}=\left(
\operatorname*{id}\nolimits_{H}\otimes S_{\operatorname*{QSym}%
\nolimits_{\mathbf{k}}}\right)  \circ\beta_{H}.
\]
This proves Corollary \ref{cor.QSym.bernstein.antipode}.
\end{proof}

\begin{corollary}
\label{cor.QSym.bernstein.antipodeH}Let $\mathbf{k}$ be a commutative ring.
Let $H$ be a commutative connected graded $\mathbf{k}$-Hopf algebra. Define a
map $\beta_{H}:H\rightarrow\underline{H}\otimes\operatorname*{QSym}%
\nolimits_{\mathbf{k}}$ as in Definition \ref{def.bernstein}. Then,%
\[
S_{H}=\left(  \operatorname*{id}\nolimits_{H}\otimes\left(  \varepsilon
_{P}\circ S_{\operatorname*{QSym}\nolimits_{\mathbf{k}}}\right)  \right)
\circ\beta_{H}.
\]

\end{corollary}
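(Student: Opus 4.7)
The plan is to combine the two facts we already have about $\beta_H$: the comodule-like identity from Theorem~\ref{thm.QSym.bernstein}~\textbf{(b)} stating that $(\operatorname{id}_H \otimes \varepsilon_P) \circ \beta_H = \operatorname{id}$, and the antipode-intertwining identity from Corollary~\ref{cor.QSym.bernstein.antipode} stating that $\beta_H \circ S_H = (\operatorname{id}_H \otimes S_{\operatorname{QSym}\nolimits_{\mathbf{k}}}) \circ \beta_H$. The claimed formula should drop out of a short diagram chase once we insert the antipode identity inside the counit identity.

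Concretely, I would start from the right-hand side and rewrite the tensor factor $\varepsilon_P \circ S_{\operatorname{QSym}\nolimits_{\mathbf{k}}}$ as $(\operatorname{id} \otimes \varepsilon_P) \circ (\operatorname{id} \otimes S_{\operatorname{QSym}\nolimits_{\mathbf{k}}})$ under the tensor, i.e.\ use the functoriality of $\otimes$ to write
\[
\bigl(\operatorname{id}_H \otimes (\varepsilon_P \circ S_{\operatorname{QSym}\nolimits_{\mathbf{k}}})\bigr) \circ \beta_H = (\operatorname{id}_H \otimes \varepsilon_P) \circ \bigl(\operatorname{id}_H \otimes S_{\operatorname{QSym}\nolimits_{\mathbf{k}}}\bigr) \circ \beta_H.
\]
Then I would substitute Corollary~\ref{cor.QSym.bernstein.antipode} to replace $(\operatorname{id}_H \otimes S_{\operatorname{QSym}\nolimits_{\mathbf{k}}}) \circ \beta_H$ by $\beta_H \circ S_H$, obtaining $(\operatorname{id}_H \otimes \varepsilon_P) \circ \beta_H \circ S_H$. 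Finally I would apply Theorem~\ref{thm.QSym.bernstein}~\textbf{(b)} to collapse $(\operatorname{id}_H \otimes \varepsilon_P) \circ \beta_H$ to $\operatorname{id}_H$, leaving $S_H$ as desired.

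There is essentially no obstacle: the only thing to watch is the canonical identification of $\underline{H} \otimes \mathbf{k}$ with $\underline{H}$ that is used implicitly when writing $(\operatorname{id}_H \otimes \varepsilon_P)$ as a map into $\underline{H}$ (as already done in Theorem~\ref{thm.QSym.bernstein}~\textbf{(b)}); this identification is compatible with the tensor-functoriality step above, so the chain of equalities is straightforward.
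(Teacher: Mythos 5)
Your proposal is correct and matches the paper's own proof essentially verbatim: the paper likewise factors $\operatorname*{id}\nolimits_{H}\otimes\left(\varepsilon_{P}\circ S_{\operatorname*{QSym}\nolimits_{\mathbf{k}}}\right)$ as $\left(\operatorname*{id}\nolimits_{H}\otimes\varepsilon_{P}\right)\circ\left(\operatorname*{id}\nolimits_{H}\otimes S_{\operatorname*{QSym}\nolimits_{\mathbf{k}}}\right)$, applies Corollary \ref{cor.QSym.bernstein.antipode} to move $\beta_{H}$ past the antipode, and then collapses $\left(\operatorname*{id}\nolimits_{H}\otimes\varepsilon_{P}\right)\circ\beta_{H}$ to $\operatorname*{id}$ via Theorem \ref{thm.QSym.bernstein} \textbf{(b)}. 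No gaps.
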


\begin{proof}
[Proof of Corollary \ref{cor.QSym.bernstein.antipodeH}.]We have%
\begin{align*}
\underbrace{\left(  \operatorname*{id}\nolimits_{H}\otimes\left(
\varepsilon_{P}\circ S_{\operatorname*{QSym}\nolimits_{\mathbf{k}}}\right)
\right)  }_{=\left(  \operatorname*{id}\nolimits_{H}\otimes\varepsilon
_{P}\right)  \circ\left(  \operatorname*{id}\nolimits_{H}\otimes
S_{\operatorname*{QSym}\nolimits_{\mathbf{k}}}\right)  }\circ\beta_{H}  &
=\left(  \operatorname*{id}\nolimits_{H}\otimes\varepsilon_{P}\right)
\circ\underbrace{\left(  \operatorname*{id}\nolimits_{H}\otimes
S_{\operatorname*{QSym}\nolimits_{\mathbf{k}}}\right)  \circ\beta_{H}%
}_{\substack{=\beta_{H}\circ S_{H}\\\text{(by Corollary
\ref{cor.QSym.bernstein.antipode})}}}\\
&  =\underbrace{\left(  \operatorname*{id}\nolimits_{H}\otimes\varepsilon
_{P}\right)  \circ\beta_{H}}_{\substack{=\operatorname*{id}\\\text{(by Theorem
\ref{thm.QSym.bernstein} \textbf{(b)})}}}\circ S_{H}=S_{H},
\end{align*}
and thus Corollary \ref{cor.QSym.bernstein.antipodeH} is proven.
\end{proof}

\begin{remark}
\label{rmk.embed}What I find remarkable about Corollary
\ref{cor.QSym.bernstein.antipodeH} is that it provides a formula for the
antipode $S_{H}$ of $H$ in terms of $\beta_{H}$ and $\operatorname*{QSym}%
\nolimits_{\mathbf{k}}$. Thus, in order to understand the antipode of $H$, it
suffices to study the map $\beta_{H}$ and the antipode of
$\operatorname*{QSym}\nolimits_{\mathbf{k}}$ well enough.

Similar claims can be made about other endomorphisms of $H$, such as the
Dynkin idempotent or the Eulerian idempotent (when $\mathbf{k}$ is a
$\mathbb{Q}$-algebra). Better yet, we can regard the map $\beta_{H}%
:H\rightarrow\underline{H}\otimes\operatorname*{QSym}\nolimits_{\mathbf{k}}$
as an \textquotedblleft embedding\textquotedblright\ of the $\mathbf{k}$-Hopf
algebra $H$ into the $H$-Hopf algebra $\underline{H}\otimes
\operatorname*{QSym}\nolimits_{\mathbf{k}}\cong\operatorname*{QSym}%
\nolimits_{H}$. Here, I am using the word \textquotedblleft
embedding\textquotedblright\ in scare quotes, since this map is not a Hopf
algebra homomorphism (its domain and its target are Hopf algebras over
different base rings); nevertheless, the map $\beta_{H}$ is injective (by
Theorem \ref{thm.QSym.bernstein} \textbf{(b)}), and the corresponding map
$\left(  \beta_{H}\right)  ^{\sharp}:\underline{H}\otimes H\rightarrow
\underline{H}\otimes\operatorname*{QSym}\nolimits_{\mathbf{k}}$ (sending every
$a\otimes h$ to $a\beta_{H}\left(  h\right)  $) is a graded $H$-Hopf algebra
homomorphism (because it is graded, an $H$-algebra homomorphism and an
$H$-coalgebra homomorphism); this shows that $\beta_{H}$ commutes with various
maps defined canonically in terms of a commutative connected graded Hopf
algebra. It appears possible to use this for proving identities in commutative
connected graded Hopf algebra.
\end{remark}

Note that Corollary \ref{cor.QSym.bernstein.antipodeH} is not completely new.
Indeed, it can also be obtained from Takeuchi's formula (\cite[Proposition
1.4.24]{Reiner}) by breaking up the map $f=\operatorname*{id}-u\epsilon$ into
$\pi_{1}+\pi_{2}+\pi_{3}+\cdots$. However, in its above form, it is more
suited to algebraic applications as discussed in Remark \ref{rmk.embed}.

\section{Questions and final remarks}

I shall finish with some remarks and open questions which may or may not be
worth further study.

\subsection{Dualizing $\operatorname*{QSym}\nolimits_{2}$}

It is well-known (see, e.g., \cite[\S 5.4]{Reiner}) that the graded
Hopf-algebraic dual of the graded Hopf algebra $\operatorname*{QSym}$ is a
graded Hopf algebra $\operatorname*{NSym}$. The second comultiplication
$\Delta_{P}$ and the second counit $\varepsilon_{P}$ on $\operatorname*{QSym}$
dualize to a second multiplication $m_{P}$ and a second unit $u_{P}$ on
$\operatorname*{NSym}$, albeit $u_{P}$ is not a map from $\mathbf{k}$ to
$\operatorname*{NSym}$ but rather a map from $\mathbf{k}$ to the completion
$\widehat{\operatorname*{NSym}}$ (specifically, to the completion of
$\operatorname*{NSym}$ with respect to its grading). We denote the
\textquotedblleft almost-$\mathbf{k}$-bialgebra\textquotedblright\ $\left(
\operatorname*{NSym},m_{P},u_{P},\Delta,\varepsilon\right)  $
(\textquotedblleft almost\textquotedblright\ because $u_{P}$ does not go into
$\operatorname*{NSym}$) by $\operatorname*{NSym}\nolimits^{\left(  2\right)
}$. Explicitly, its operations are given as follows:

\begin{itemize}
\item Its multiplication $m_{P}$ is given by%
\[
m_{P}\left(  H_{\beta}\otimes H_{\gamma}\right)  =\sum_{\substack{A\in
\mathbb{N}_{\operatorname*{red}}^{\bullet,\bullet};\\\operatorname*{row}%
A=\beta;\\\operatorname*{column}A=\gamma}}H_{\left(  \operatorname*{read}%
A\right)  ^{\operatorname*{red}}}\ \ \ \ \ \ \ \ \ \ \text{for all }%
\beta,\gamma\in\operatorname*{Comp},
\]
where $\left(  H_{\alpha}\right)  _{\alpha\in\operatorname*{Comp}}$ is the
basis of $\operatorname*{NSym}$ dual to the basis $\left(  M_{\alpha}\right)
_{\alpha\in\operatorname*{Comp}}$ of $\operatorname*{QSym}$. Thus, it is
precisely the \emph{internal product} $\ast$ introduced in \cite[Section
5.1]{NCSF1} (by \cite[Proposition 5.1]{NCSF1}). The canonical projection
$\operatorname*{NSym}\rightarrow\Lambda$ (which sends each $H_{\alpha}$ to the
complete homogeneous symmetric function $h_{\alpha}$) intertwines this
internal product $m_{P}$ with the Kronecker multiplication on $\Lambda$.

\item Its unit $u_{P}$ sends $1\in\mathbf{k}$ to the element%
\[
u_{P}=H_{\left(  {}\right)  }+H_{\left(  1\right)  }+H_{\left(  2\right)
}+H_{\left(  3\right)  }+\cdots
\]
of the completion $\widehat{\operatorname*{NSym}}$ of $\operatorname*{NSym}$.
\end{itemize}

Forgetting $\Delta$ and $\varepsilon$ for a moment, we can identify the
\textquotedblleft almost-algebra\textquotedblright\ $\operatorname*{NSym}%
\nolimits^{\left(  2\right)  }=\left(  \operatorname*{NSym},m_{P}%
,u_{P}\right)  $ with the direct sum of the \emph{descent algebras} of the
symmetric groups $S_{0},S_{1},S_{2},\ldots$ (see, e.g., \cite[Section
5.1]{NCSF1}).

We can (more or less) dualize Theorem \ref{thm.QSym.bernstein}. As a result,
instead of a $\operatorname*{QSym}\nolimits_{2}$-comodule structure on every
commutative graded connected Hopf algebra $H$, we obtain an
$\operatorname*{NSym}\nolimits^{\left(  2\right)  }$-module structure on every
cocommutative graded connected Hopf algebra $H$. This structure is rather
well-known: It has $H_{\alpha}\in\operatorname*{NSym}\nolimits^{\left(
2\right)  }$ act as the convolution product
\[
\pi_{a_{1}}\star\pi_{a_{2}}\star\cdots\star\pi_{a_{k}}\in\operatorname*{End}H
\]
for every composition $\alpha=\left(  a_{1},a_{2},\ldots,a_{k}\right)  $
(where $\star$ denotes the convolution product in $\operatorname*{End}H$).
This $\operatorname*{NSym}\nolimits^{\left(  2\right)  }$-module structure is
well-known; it appears implicitly in \cite[Th\'{e}or\`{e}me II.7]{Patras94}
(and is the map $\mathfrak{W}$ in \cite[Hint to Exercise 5.4.6]{Reiner},
although \cite{Reiner} does not prove that it is an action). It provides a way
to transfer information from \textbf{the} descent algebra
$\operatorname*{NSym}\nolimits_{2}$ to \textbf{a} descent algebra $\left(
\operatorname*{End}\nolimits_{\text{graded}}H,\circ\right)  $ of a
cocommutative graded connected Hopf algebra $H$.

\begin{question}
\label{quest.NSym}Is it possible to prove that this works using universal
properties like I have done above for Theorem \ref{thm.QSym.bernstein}? (Just
saying \textquotedblleft dualize Theorem \ref{thm.QSym.bernstein}%
\textquotedblright\ is not enough, because dualization over arbitrary
commutative rings is a heuristic, not a proof strategy; there does not seem to
be a general theorem stating that \textquotedblleft the dual of a correct
result is correct\textquotedblright, at least when the result has assumptions
about gradedness and similar things.)

If the answer is positive, can we use this to give a slick proof of Solomon's
Mackey formula? (I am not saying that there is a need for slick proofs of this
formula -- not after those by Gessel and Bidigare --, but it would be
interesting to have a new one. I am thinking of letting both
$\operatorname*{NSym}^{\left(  2\right)  }$ and the symmetric groups act on
the tensor algebra $T\left(  V\right)  $ of an infinite-dimensional free
$\mathbf{k}$-module $V$; one then only needs to check that the actions match.)
\end{question}

Note that if $u$ and $v$ are two elements of $\operatorname*{NSym}%
\nolimits^{\left(  2\right)  }$, then the action of the $\operatorname*{NSym}%
$-product $uv$ (not the $\operatorname*{NSym}\nolimits^{\left(  2\right)  }%
$-product!) on $H$ is the convolution of the actions of $u$ and $v$. So the
action map $\operatorname*{NSym}\nolimits^{\left(  2\right)  }\rightarrow
\operatorname*{End}H$ takes the multiplication of $\operatorname*{NSym}%
\nolimits^{\left(  2\right)  }$ to composition, and the multiplication of
$\operatorname*{NSym}$ to convolution.

\subsection{Natural transformations}

\begin{question}
\label{quest.allfunctorial}In Question \ref{quest.NSym}, we found a
$\mathbf{k}$-algebra homomorphism $\operatorname*{NSym}\nolimits^{\left(
2\right)  }\rightarrow\left(  \operatorname*{End}H,\circ\right)  $ for every
cocommutative connected graded Hopf algebra $H$. This is functorial in $H$,
and so is really a map (i.e., natural transformation) from the constant
functor $\operatorname*{NSym}\nolimits^{\left(  2\right)  }$ to the functor%
\begin{align*}
\left\{  \text{cocommutative connected graded Hopf algebras}\right\}   &
\rightarrow\left\{  \mathbf{k}\text{-modules}\right\}  ,\\
H  &  \mapsto\operatorname*{End}H.
\end{align*}
Does the image of this action span (up to topology) the whole functor? I guess
I am badly abusing categorical language here, so let me restate the question
in simpler terms: If a natural endomorphism of the $\mathbf{k}$-module $H$ is
given for every cocommutative connected graded Hopf algebra $H$, and this
endomorphism is known to annihilate all homogeneous components $H_{m}$ for
sufficiently high $m$ (this is what I mean by \textquotedblleft up to
topology\textquotedblright), then must there be an element $v$ of
$\operatorname*{NSym}\nolimits^{\left(  2\right)  }$ such that this
endomorphism is the action of $v$ ?

If the answer is \textquotedblleft No\textquotedblright, then does it change
if we require the endomorphism of $H$ to be graded? If we require $\mathbf{k}$
to be a field of characteristic $0$ ?

What if we restrict ourselves to commutative cocommutative connected graded
Hopf algebras? At least then, if $\mathbf{k}$ is a finite field $\mathbb{F}%
_{q}$, there are more natural endomorphisms of $H$, such as the Frobenius
morphism $x\mapsto x^{q}$ and its powers. One can then ask for the graded
endomorphisms of $H$, but actually it is also interesting to see how the full
$\mathbf{k}$-algebra of natural endomorphisms looks like (how do the
endomorphisms coming from $\operatorname*{NSym}\nolimits^{\left(  2\right)  }$
interact with the Frobenii?). And what about characteristic $0$ here?
\end{question}

\subsection{Dropping commutativity}

\begin{question}
\label{quest.noncocomm}What are the natural endomorphisms of connected graded
Hopf algebras, without any cocommutativity or commutativity assumption? I
suspect that they will form a connected graded Hopf algebra, with two
multiplications (one for composition and the other for convolution), but now
with a basis indexed by \textquotedblleft mopiscotions\textquotedblright%
\ (i.e., pairs $\left(  \alpha,\sigma\right)  $ of a composition $\alpha$ and
a permutation $\sigma\in\mathfrak{S}_{\ell\left(  \alpha\right)  }$). Is this
a known combinatorial Hopf algebra?
\end{question}

\subsection{Other combinatorial Hopf algebras?}

\begin{question}
\label{quest.bigger}Can we extend the map $\beta_{H}:H\rightarrow
\underline{H}\otimes\operatorname*{QSym}\nolimits_{\mathbf{k}}$ to a map
$H\rightarrow\underline{H}\otimes U$ for some combinatorial Hopf algebra $U$
bigger than $\operatorname*{QSym}\nolimits_{\mathbf{k}}$ ? What if we require
some additional (say, dendriform?) structure on $H$ ? Can we achieve
$U=\operatorname*{NCQSym}\nolimits_{\mathbf{k}}$ or
$U=\operatorname*{DoublePosets}\nolimits_{\mathbf{k}}$ (the combinatorial Hopf
algebra of double posets, which is defined for $\mathbf{k}=\mathbb{Z}$ and
denoted by $\mathbb{Z}\mathbf{D}$ in \cite{Mal-Reu-DP}, and can be similarly
defined over any $\mathbf{k}$) ? (I am singling out these two Hopf algebras
because they have fairly nice internal comultiplications. Actually, the
internal comultiplication of $\operatorname*{NCQSym}\nolimits_{\mathbf{k}}$ is
the key to Bidigare's proof of Solomon's Mackey formula \cite[\S 2]{Schocker},
and I feel it will tell us more if we listen to it.)

Aguiar suggests that the map $H\rightarrow\underline{H}\otimes
\operatorname*{NCQSym}\nolimits_{\mathbf{k}}$ I am looking for is the dual of
his action of the Tits algebra on Hopf monoids \cite[Proposition 88]{Aguiar13}.
\end{question}

\subsection{Further consequences?}

\begin{question}
\label{quest.relhopf}Do we gain anything from applying Corollary
\ref{cor.QSym.bernstein.antipode} to $H=\operatorname*{QSym}%
\nolimits_{\mathbf{k}}$ (thus getting a statement about $\Delta_{P}^{\prime}$)
? Probably not much for $\Delta_{P}^{\prime}$ that the Marne-la-Vall\'{e}e
school has not already discovered using virtual alphabets (the dual version is
the statement that $S\left(  a\ast b\right)  =a\ast S\left(  b\right)  $ for
all $a,b\in\operatorname*{NSym}\nolimits_{\mathbf{k}}$, where $\ast$ is the
internal product).
\end{question}

\begin{question}
\label{quest.split}From Theorem \ref{thm.QSym.bernstein} \textbf{(a)} and
Proposition \ref{prop.QSym.secondcomult.alt}, we can conclude that $\Delta
_{P}^{\prime}$ is a $\left(  \mathbf{k},\underline{\operatorname*{QSym}%
\nolimits_{\mathbf{k}}}\right)  $-coalgebra homomorphism. If I am not
mistaken, this can be rewritten as the equality%
\[
\left(  AB\right)  \ast G=\sum_{\left(  G\right)  }\left(  A\ast G_{\left(
1\right)  }\right)  \left(  B\ast G_{\left(  2\right)  }\right)
\]
(using Sweedler's notation) for any three elements $A$, $B$ and $G$ of
$\operatorname*{NSym}$. This is the famous splitting formula.

Now, it is known from \cite[\S 7]{NCSF7} that the same splitting formula holds
when $A$ and $B$ are elements of $\operatorname*{FQSym}$ (into which
$\operatorname*{NSym}$ is known to inject), as long as $G$ is still an element
of $\operatorname*{NSym}$ (actually, it can be an element of the bigger
Patras-Reutenauer algebra, but let us settle for $\operatorname*{NSym}$ so
far). Can this be proven in a similar vein? How much of the
Marne-la-Vall\'{e}e theory follows from Theorem \ref{thm.ABS.hopf}?
\end{question}


\begin{thebibliography}{99999999}                                                                                         %


\bibitem[Aguiar13]{Aguiar13}Marcelo Aguiar, Swapneel Mahajan, \textit{Hopf
monoids in the category of species}, Contemporary Mathematics 585 (2013), pp.
17--124.\newline\url{http://www.math.cornell.edu/~maguiar/hmonoid.pdf}

\bibitem[ABS03]{ABS}Marcelo Aguiar, Nantel Bergeron, Frank Sottile,
\textit{Combinatorial Hopf algebras and generalized Dehn-Sommerville
relations}, Compositio Mathematica 142 (2006) 1-30.\newline Updated version
2011 on Aguiar's website: \url{http://www.math.cornell.edu/~maguiar/pubs.html}

\bibitem[DHNT08]{NCSF7}Gerard H. E. Duchamp, Florent Hivert, Jean-Christophe
Novelli, Jean-Yves Thibon, \textit{Noncommutative Symmetric Functions VII:
Free Quasi-Symmetric Functions Revisited}, Annals of Combinatorics 15 (2011),
pp. 655--673, arXiv:0809.4479v2.\newline\url{http://arxiv.org/abs/0809.4479v2}

\bibitem[Fresse14]{Fresse-Op}Benoit Fresse, \textit{Homotopy of operads \&
Grothendieck-Teichm\"{u}ller groups, First Volume}, preprint, 30 January
2016.\newline\url{http://math.univ-lille1.fr/~fresse/OperadHomotopyBook/OperadHomotopy-FirstVolume.pdf}

\bibitem[Gessel84]{Gessel}Ira M. Gessel, \textit{Multipartite P-partitions and
Inner Products of Skew Schur Functions}, Contemporary Mathematics, vol. 34,
1984, pp. 289--301.\newline\url{http://people.brandeis.edu/~gessel/homepage/papers/multipartite.pdf}

\bibitem[GKLLRT94]{NCSF1}\href{https://arxiv.org/abs/hep-th/9407124v1}{Israel
Gelfand, D. Krob, Alain Lascoux, B. Leclerc, V. S. Retakh, J.-Y. Thibon,
\textit{Noncommutative symmetric functions}, arXiv:hep-th/9407124v1.}

\bibitem[Grinbe14]{Gri-dimm}Darij Grinberg, \textit{Dual immaculate creation
operators and a dendriform algebra structure on the quasisymmetric functions},
\href{http://arxiv.org/abs/1410.0079v8}{arXiv preprint arXiv:1410.0079v8}.

\bibitem[GriRei14]{Reiner}Darij Grinberg, Victor Reiner, \textit{Hopf Algebras
in Combinatorics}, arXiv:1409.8356v7, version of 28 July 2020. \newline\url{http://arxiv.org/abs/1409.8356v7}

\bibitem[Haz08]{HazeWitt}Michiel Hazewinkel, \textit{Witt vectors. Part 1},
arXiv:0804.3888v1.\newline\url{http://arxiv.org/abs/0804.3888v1}

\bibitem[MalReu95]{MalReu95}%
\href{https://doi.org/10.1006/jabr.1995.1336}{Claudia
Malvenuto, Christophe Reutenauer, \textit{Duality between Quasi-Symmetric
Functions and the Solomon Descent Algebra}, Journal of Algebra \textbf{177}
(1995), pp. 967--982.}

\bibitem[MalReu11]{Mal-Reu-DP}Claudia Malvenuto, Christophe Reutenauer,
\textit{A self paired Hopf algebra on double posets and a
Littlewood-Richardson rule}, Journal of Combinatorial Theory, Series A 118
(2011), pp. 1322--1333.\newline An older version appeared on arXiv as
\href{http://arxiv.org/abs/0905.3508v1}{arXiv:0905.3508v1}.

\bibitem[Patras94]{Patras94}\href{https://doi.org/10.1006/jabr.1994.1352}{F.
Patras, \textit{L'alg\`{e}bre des descentes d'une big\`{e}bre gradu\'{e}e},
Journal of Algebra \textbf{170}, Issue 2, 1 December 1994, Pages 547--566.}

\bibitem[Petra02]{Petracci}Emanuela Petracci, \textit{Functional equations and
Lie algebras}, thesis at Rome.\newline%
\url{http://web.archive.org/web/20061126233228/http://www.iecn.u-nancy.fr/~petracci/tesi.pdf}

\bibitem[Schock04]{Schocker}Manfred Schocker, \textit{The Descent Algebra of
the Symmetric Group}, published in: Vlastimil Dlab, Claus Michael Ringel
(eds.): Representations of finite dimensional algebras and related topics in
Lie theory and geometry, Fields Inst. Commun., vol. 40, Amer. Math. Soc.,
Providence, RI, 2004, pp. 145--161.\newline%
\url{http://www.mathematik.uni-bielefeld.de/~ringel/schocker-neu.ps}

\bibitem[Sweed69]{Sweedler-HA}Moss E. Sweedler, \textit{Hopf Algebras}, W. A.
Benjamin 1969.

\bibitem[Zelevi81]{Zelevi81}Andrey V. Zelevinsky, \textit{Representations of
Finite Classical Groups: A Hopf Algebra Approach}, Lecture Notes in
Mathematics \#869, Springer 1981.
\end{thebibliography}
\end{document}